\documentclass[oneside,reqno,11pt]{amsart}

\usepackage{geometry}
\usepackage{mathtools}
\usepackage{amsmath}
\usepackage{amsthm}
\usepackage{amssymb}
\usepackage{tikz}
\usepackage{tikz-cd}
\usepackage[all,cmtip]{xy}
\usepackage{float}
\usepackage{indentfirst}
\usepackage[mathscr]{euscript}
\usepackage{stmaryrd}
\usepackage{hyperref}
\usepackage{csquotes}
\usepackage{multirow}
\usepackage{verbatim}

\makeatletter
\def\blfootnote{\gdef\@thefnmark{}\@footnotetext}
\makeatother

\theoremstyle{plain}
\newtheorem{thm}{Theorem}[section]
\newtheorem{prop}[thm]{Proposition}
\newtheorem{lem}[thm]{Lemma}
\newtheorem{cor}[thm]{Corollary}
\newtheorem{conj}[thm]{Conjecture}

\theoremstyle{definition}
\newtheorem{dfn}[thm]{Definition}
\newtheorem{ex}[thm]{Example}
\newtheorem*{ack}{Acknowledgements}

\theoremstyle{remark}
\newtheorem{rmk}[thm]{Remark}

\numberwithin{equation}{section}

\title[On Gromov--Yomdin type theorems]{On Gromov--Yomdin type theorems and a categorical interpretation of holomorphicity}

\author{Federico Barbacovi}
\address{Department of Mathematics, University College London, Gower Street, London WC1E 6BT, United Kingdom}
\email{federico.barbacovi.18@ucl.ac.uk}

\author{Jongmyeong Kim}
\address{Center for Geometry and Physics, Institute for Basic Science (IBS), Pohang 37673, Republic of Korea}
\email{myeong@ibs.re.kr}

\begin{document}

\begin{abstract}
In topological dynamics, the Gromov--Yomdin theorem states that the topological entropy of a holomorphic automorphism $f$ of a smooth projective variety is equal to the logarithm of the spectral radius of the induced map $f^*$.
In order to establish a categorical analogue of the Gromov--Yomdin theorem, one first needs to find a categorical analogue of a holomorphic automorphism.
In this paper, we propose a categorical analogue of a holomorphic automorphism and prove that the Gromov--Yomdin type theorem holds for them.
\end{abstract}

\maketitle
\tableofcontents

\section{Introduction}\label{intro}

\subsection{Gromov--Yomdin theorem}

A pair $(X,f)$ of a compact Hausdorff space $X$ and a continuous map $f : X \to X$ is called a {\em topological dynamical system}.
For a topological dynamical system $(X,f)$, one can define its {\em topological entropy} $h_\mathrm{top}(f) \in [0,\infty)$ which measures how complicated the dynamics of $f$ is.

In some cases, the topological entropy of $f$ can be computed or estimated using linear algebraic information about $f$.
Recall that, for a linear map $A$, its {\em spectral radius} $\rho(A)$ is defined as the largest absolute value of the eigenvalues of $A$ (see Definition \ref{spec-rad}).

\begin{thm}[Yomdin's inequality {\cite{Yom}}]\label{yom-thm}
Let $X$ be a compact smooth manifold and $f : X \to X$ be a diffeomorphism.
Then we have
\begin{equation*}
h_\mathrm{top}(f) \geq \log\rho(f^* : H^*(X) \to H^*(X)).
\end{equation*}
\end{thm}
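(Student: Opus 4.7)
The plan is to combine two ingredients: (i) a cohomological class that grows under $(f^*)^n$ can be detected, up to bounded factors, by the volume growth of an iterated image $f^n(Y)$ of some smooth submanifold $Y$; and (ii) Yomdin's fundamental volume growth theorem, which bounds the volume growth of iterated images from above by $h_{\mathrm{top}}(f)$ when $f$ is $C^\infty$. Combining the two then gives $h_{\mathrm{top}}(f)\ge\log\rho(f^*)$.

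For step (i), I would pick a degree $p$ where the spectral radius $\rho_p:=\rho(f^*|_{H^p(X)})$ achieves the maximum and fix a class $\alpha\in H^p(X;\mathbb{R})$ with $\|(f^*)^n\alpha\|\sim\rho_p^{\,n}$ (up to subexponential factors, chosen via a Jordan-block decomposition of $f^*$). By Poincaré duality, choose a smooth closed $(d-p)$-form $\omega$ with $\int_X\omega\wedge\alpha_n\neq 0$ for some $\alpha_n$ in the orbit, and a smooth compact $p$-dimensional submanifold $Y$ (or a sequence of such) with controlled geometry representing a class paired with $\alpha$. Using the identity $\int_{f^n(Y)}\omega=\int_Y(f^n)^*\omega$ together with the bound $\bigl|\int_Z\omega\bigr|\le C\,\mathrm{vol}(Z)$ for any Riemannian submanifold $Z$, one extracts
\begin{equation*}
\limsup_{n\to\infty}\tfrac{1}{n}\log\mathrm{vol}(f^n(Y))\;\ge\;\log\rho_p.
\end{equation*}

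Step (ii) is where the real difficulty lies, and is the content of Yomdin's main theorem. For any smooth compact submanifold $Y\subset X$, Yomdin showed $\limsup_n\tfrac{1}{n}\log\mathrm{vol}(f^n(Y))\le h_{\mathrm{top}}(f)$. The engine is the \emph{reparametrization lemma}: for fixed $r$ and large $n$, the restriction $f^n|_Y$ can be covered by the images of $\exp\!\bigl(n\cdot\tfrac{\dim Y}{r}\log\mathrm{Lip}(f)+o(n)\bigr)$ reparametrizing $C^r$-maps from the unit cube with all derivatives bounded by $1$. Each such piece has $\mathrm{vol}\le 1$, and its image is an $(n,\varepsilon)$-Bowen ball, so one gets a bound $\mathrm{vol}(f^n(Y))\le\exp(n h_{\mathrm{top}}(f)+n\tfrac{\dim Y}{r}\log\mathrm{Lip}(f)+o(n))$; letting $r\to\infty$ uses the $C^\infty$ hypothesis to kill the error term.

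The main obstacle is the reparametrization lemma, whose proof goes through Vitushkin-type semi-algebraic volume estimates combined with induction on dimension and a careful subdivision of the cube adapted to the Taylor expansion of $f^n$. Once this is in place, the assembly is routine: feed (i) into (ii) with $Y$ as above, and conclude $h_{\mathrm{top}}(f)\ge\log\rho_p=\log\rho(f^*)$.
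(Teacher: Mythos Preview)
The paper does not prove this statement: Theorem~\ref{yom-thm} is quoted from \cite{Yom} purely as background and motivation for the categorical analogues, with no argument supplied. So there is no in-paper proof against which to compare your proposal.

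That said, your two-step outline is the standard route and matches what is in Yomdin's paper and the subsequent literature: bound the volume growth of iterated submanifolds from below by $\log\rho(f^*)$ via a pairing argument, and from above by $h_{\mathrm{top}}(f)$ via Yomdin's $C^\infty$ volume-growth theorem. Two small points are worth flagging. In step~(i) your dimensions do not match: you introduce a $(d-p)$-form $\omega$ and a $p$-dimensional submanifold $Y$ and then write $\int_{f^n(Y)}\omega$, which only makes sense when $p=d-p$. The clean fix is to integrate a closed $p$-form representing $\alpha$ itself over finitely many $p$-cycles $Y_1,\dots,Y_k$ spanning $H_p(X;\mathbb{R})$, so that $\max_i\bigl|\int_{Y_i}(f^n)^*\alpha\bigr|$ controls $\lVert(f^*)^n[\alpha]\rVert$, and then the bound $\bigl|\int_{f^n(Y_i)}\alpha\bigr|\le\lVert\alpha\rVert_\infty\cdot\mathrm{vol}(f^n(Y_i))$ gives the desired lower estimate on volume growth. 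In step~(ii) your sketch compresses two separate inputs: the reparametrization lemma by itself bounds only the \emph{local} volume growth (of $f^n$ restricted to a Bowen $(n,\varepsilon)$-ball intersected with $Y$) by roughly $\exp\bigl(n\tfrac{\dim Y}{r}\log^+\mathrm{Lip}(f)\bigr)$; the factor $\exp(n\,h_{\mathrm{top}}(f))$ enters separately as the number of Bowen balls needed to cover $Y$. With those clarifications the assembly is as you describe, and the $C^\infty$ hypothesis is used exactly where you indicate, to send $r\to\infty$ and kill the error term.
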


In order to obtain the opposite inequality, we have to put an additional condition on $f$.

\begin{thm}[Gromov's inequality {\cite{Gro}}]\label{gro-thm}
Let $X$ be a smooth projective variety and $f : X \to X$ be a holomorphic automorphism.
Then we have
\begin{equation*}
h_\mathrm{top}(f) \leq \log\rho(f^* : H^*(X) \to H^*(X)).
\end{equation*}
\end{thm}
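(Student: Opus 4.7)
The plan is to combine Gromov's entropy/volume-growth estimate with Wirtinger's formula for volumes of complex submanifolds. First I would recall the general inequality
\begin{equation*}
h_{\mathrm{top}}(f) \;\leq\; \limsup_{n \to \infty} \frac{1}{n} \log \mathrm{Vol}(\Gamma_n),
\end{equation*}
where $\Gamma_n \subset X^n$ denotes the graph of $(\mathrm{id}, f, \ldots, f^{n-1})$ equipped with the restriction of a product Riemannian metric on $X^n$. This holds for any continuous self-map of a compact Riemannian manifold: an $(n,\varepsilon)$-separated set of points in $X$ maps under $x \mapsto (x, fx, \ldots, f^{n-1}x)$ to a set of points of $\Gamma_n$ pairwise $\varepsilon$-separated in the product metric, so a local volume-packing estimate bounds their cardinality by $C(\varepsilon)\,\mathrm{Vol}(\Gamma_n)$.

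The hypothesis of holomorphicity enters in the next step. Since $f$ is a holomorphic automorphism, $\Gamma_n$ is a complex analytic submanifold of $X^n$. By Wirtinger's theorem its Riemannian volume agrees with the integral of the top power of any Kähler form on $X^n$ restricted to $\Gamma_n$. Using the product Kähler form $\omega_{X^n} = \sum_{i=0}^{n-1} \pi_i^* \omega$ and the isomorphism $X \cong \Gamma_n$, $x \mapsto (x, fx, \ldots, f^{n-1}x)$, this gives
\begin{equation*}
\mathrm{Vol}(\Gamma_n) \;=\; \frac{1}{d!} \int_X \Bigl( \sum_{i=0}^{n-1} (f^i)^* \omega \Bigr)^d, \qquad d := \dim_{\mathbb{C}} X.
\end{equation*}
By Stokes the right-hand side depends only on the cohomology classes $(f^*)^i[\omega] \in H^{1,1}(X)$; expanding via the multinomial theorem turns it into a polynomial (in $n$) combination of intersection numbers of the form $\int_X (f^{i_1})^*[\omega] \smile \cdots \smile (f^{i_d})^*[\omega]$.

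The main obstacle is to bound this combination by $p(n) \cdot \rho(f^*|H^*(X))^n$ with $p$ polynomial, so that taking $\log / n$ and $\limsup$ yields the claim. A naive estimate using any norm on $H^*(X)$ together with $\|(f^*)^i\| \leq C(\rho(f^*|H^{1,1}) + \varepsilon)^i$ produces only the far too weak bound of order $\rho(f^*|H^{1,1})^{dn}$. To obtain the sharp exponent, one exploits the positivity of the classes $(f^i)^*[\omega]$ via the Khovanskii--Teissier inequalities, equivalently via the log-concavity of the dynamical degrees $\rho(f^*|H^{k,k}(X))$ with the boundary constraints that these equal $1$ for $k = 0, d$ (since $f$ is an automorphism, $f^*$ acts trivially on $H^{0,0}$ and $H^{d,d}$). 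These positivity inputs let one bound each intersection number so that the total multinomial sum telescopes to $\rho(f^*|H^*(X))^n$ up to polynomial factors, giving the desired
\begin{equation*}
h_{\mathrm{top}}(f) \;\leq\; \log \rho(f^* : H^*(X) \to H^*(X)).
\end{equation*}
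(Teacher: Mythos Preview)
The paper does not prove this theorem; it is quoted in the introduction as Gromov's result \cite{Gro}, serving only as background motivation for the categorical Gromov--Yomdin type theorems that form the actual content of the paper. There is therefore no ``paper's own proof'' to compare against.

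Your outline is the standard argument, essentially Gromov's original, and is correct as a sketch. The one place that could be sharpened is the last paragraph: the mechanism by which Khovanskii--Teissier converts the multinomial sum into the right exponent is that, after reindexing the intersection numbers by the consecutive differences $j_k = i_{k+1}-i_k \geq 0$ (using that $f$ is an automorphism to pull back by $f^{-i_1}$), each term is controlled by a product of dynamical degrees $\prod_k \lambda_{a_k}(f)^{j_k}$ with $\sum_k j_k \leq n$, where $\lambda_a(f) = \rho(f^*|_{H^{a,a}})$. Log-concavity $\lambda_a^2 \geq \lambda_{a-1}\lambda_{a+1}$ together with $\lambda_0 = \lambda_d = 1$ forces each such product to be at most $(\max_a \lambda_a)^n$, and $\max_a \lambda_a = \rho(f^*|_{H^*(X)})$ because $f^*$ preserves the Hodge decomposition and its actions on $H^{p,q}$ and $H^{q,p}$ are complex conjugate. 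The number of multinomial terms is polynomial in $n$, giving the desired bound.
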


Combining these two theorems, we obtain the following corollary which we refer to as the {\em Gromov--Yomdin theorem}.

\begin{cor}[Gromov--Yomdin theorem]\label{gy-thm}
Let $X$ be a smooth projective variety and $f : X \to X$ be a holomorphic automorphism.
Then we have
\begin{equation*}
h_\mathrm{top}(f) = \log\rho(f^* : H^*(X) \to H^*(X)).
\end{equation*}
\end{cor}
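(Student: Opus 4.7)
The plan is essentially to read off the corollary as the conjunction of the two inequalities already stated. A smooth projective variety $X$ is, when viewed through the real-analytic structure underlying its complex structure, a compact smooth manifold; and a holomorphic automorphism $f : X \to X$ is in particular a diffeomorphism of this compact smooth manifold. So both hypotheses of Theorem \ref{yom-thm} and Theorem \ref{gro-thm} are simultaneously satisfied.

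Given this, the proof would proceed in two short steps. First I would invoke Theorem \ref{yom-thm} to obtain the lower bound $h_\mathrm{top}(f) \geq \log\rho(f^* : H^*(X) \to H^*(X))$. Second, I would invoke Theorem \ref{gro-thm} to obtain the matching upper bound $h_\mathrm{top}(f) \leq \log\rho(f^* : H^*(X) \to H^*(X))$. Combining the two inequalities yields the claimed equality.

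There is essentially no obstacle: the only point that deserves a sentence of care is the compatibility of the hypotheses, namely the observation that a holomorphic automorphism of a smooth projective variety is in particular a smooth self-diffeomorphism of a compact smooth manifold, so that Yomdin's theorem applies alongside Gromov's. The cohomology groups and the induced map $f^*$ in the two statements are the same object, so no reconciliation of notation is needed.
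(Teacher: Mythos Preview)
Your proposal is correct and matches the paper's approach exactly: the paper simply states that the corollary follows by ``combining these two theorems,'' i.e., Theorems \ref{yom-thm} and \ref{gro-thm}, and provides no further argument. Your additional remark that a holomorphic automorphism of a smooth projective variety is in particular a diffeomorphism of a compact smooth manifold is the only point needing justification, and you have supplied it.
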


As a categorical analogue of the notion of a topological dynamical system, Dimitrov--Haiden--Katzarkov--Kontsevich \cite{DHKK} introduced the notion of a {\em categorical dynamical system}: it is a pair $(\mathcal{D},\Phi)$ of a triangulated category $\mathcal{D}$ and an exact endofunctor $\Phi : \mathcal{D} \to \mathcal{D}$.
They also introduced its {\em categorical entropy} $h_t(\Phi)$ which carries a real parameter $t$ (see Definition \ref{cat-ent}).

When $\mathcal{D}$ is the bounded derived category $D^b\mathrm{Coh}(X)$ of a smooth projective variety $X$ and $\Phi$ is the derived pullback $\mathbb{L}f^*$ of a surjective endomorphism $f$ of $X$, Kikuta--Takahashi showed that
\begin{equation*}
h_0(\mathbb{L}f^*) = h_\mathrm{top}(f) = \log\rho([\mathbb{L}f^*] : \mathcal{N}(X) \to \mathcal{N}(X))
\end{equation*}
where $\mathcal{N}(X)$ denotes the {\em numerical Grothendieck group} of $X$.
It is thus natural to ask whether the {\em Gromov--Yomdin type theorem} holds for the categorical entropy.

\begin{conj}[{\cite[Conjecture 5.3]{KT}}]\label{kt-conj}
Let $D^b\mathrm{Coh}(X)$ be the bounded derived category of a smooth projective variety $X$ and $\Phi : D^b\mathrm{Coh}(X) \to D^b\mathrm{Coh}(X)$ be an exact endofunctor.
Then we have
\begin{equation*}
h_0(\Phi) = \log\rho([\Phi] : \mathcal{N}(X) \to \mathcal{N}(X)).
\end{equation*}
\end{conj}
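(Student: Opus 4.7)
The plan is to establish the two inequalities in Conjecture \ref{kt-conj} separately. The lower bound $h_0(\Phi) \geq \log \rho([\Phi])$ essentially follows from Kikuta--Takahashi's argument \cite{KT}: any presentation of $\Phi^n G$ built from $\delta_0(G, \Phi^n G)$ cones of shifts of a split-generator $G$ descends to an expression of $[\Phi]^n [G]$ in $\mathcal{N}(X)$ whose length is controlled by $\delta_0$, and pairing against a largest-eigenvalue eigenvector of the dual action recovers a factor $\rho([\Phi])^n$. The essential content of the conjecture is therefore the upper bound $h_0(\Phi) \leq \log \rho([\Phi])$, and this is where I would concentrate the effort.

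\textbf{Upper bound via stability conditions.} For the upper bound, I would try to produce, for each $n$, a decomposition of $\Phi^n G$ whose complexity is bounded by $C \rho([\Phi])^n$. Fix a Bridgeland stability condition $\sigma = (Z, \mathcal{P})$ on $D^b\mathrm{Coh}(X)$ (assumed to exist; this is nontrivial for higher-dimensional $X$). For $F \in D^b\mathrm{Coh}(X)$, the Harder--Narasimhan filtration yields semistable factors $A_1, \dots, A_r$ with mass $m_\sigma(F) = \sum_i |Z(A_i)|$; under a uniform approximation hypothesis for $\sigma$-semistable objects by bounded-length complexes of shifts of $G$, one obtains $\delta_0(G, F) \leq C' m_\sigma(F)$. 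The problem then reduces to proving $m_\sigma(\Phi^n G) \leq C'' \rho([\Phi])^n$, from which $h_0(\Phi) \leq \log \rho([\Phi])$ follows by taking $\log$ and dividing by $n$.

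\textbf{Mass versus numerical class.} Since the central charge $Z$ factors through $\mathcal{N}(X)$, one has a seminorm $\| \cdot \|_Z$ on $\mathcal{N}(X)$ with $|Z(A_i)| = \| [A_i] \|_Z$. By Gelfand's formula, $\| [\Phi]^n [G] \|_Z = O(\rho([\Phi])^n \cdot \mathrm{poly}(n))$, which matches the target mass bound provided one can establish a non-cancellation estimate $\sum_i \| [A_i] \|_Z \leq C''' \| \sum_i [A_i] \|_Z = C''' \| [\Phi^n G] \|_Z$ for the HN-factors of $\Phi^n G$. If such a comparison holds, combining the three steps delivers the conjecture.

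\textbf{Main obstacle.} Controlling the putative HN-cancellation is where the argument stands or falls, and I expect it to be the main obstacle. There is no \emph{a priori} reason the numerical classes of the HN-factors of $\Phi^n G$ must sum non-degenerately: substantial mass can persist while the total numerical class is small, as happens for instance with spherical twists acting trivially on $\mathcal{N}(X)$ in Calabi--Yau settings. Overcoming this in full generality would require genuinely new input, either a numerical quantity that intrinsically bounds mass growth for every exact endofunctor, or a proof that for arbitrary $\Phi$ the HN-factors of $\Phi^n G$ asymptotically align in $\mathcal{N}(X)$ rather than cancelling. Absent such input, the strategy above only delivers $h_0(\Phi) \leq \log \rho([\Phi]) + \epsilon(\Phi)$ for a nonnegative error depending on the $\sigma$-compatibility of $\Phi$, which is precisely where the difficulty of the conjecture lives.
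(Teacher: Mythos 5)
The statement you are trying to prove is stated in the paper as a \emph{conjecture} (Kikuta--Takahashi's Conjecture 5.3), not as a theorem: the paper offers no proof of it, and in fact explicitly records that counterexamples are known (citing the references [BK1], [Fan1], [Mat], [Ouc] immediately after the statement). So no correct complete proof exists, and your proposal cannot be completed as written. To your credit, you have located the fatal point precisely. The ``non-cancellation estimate'' $\sum_i \lVert [A_i] \rVert_Z \leq C''' \lVert [\Phi^n G] \rVert_Z$ for the HN factors of $\Phi^n G$ is exactly what fails in the known counterexamples: for instance, Fan's examples on even-dimensional Calabi--Yau manifolds and compositions of spherical twists produce autoequivalences whose induced action on $\mathcal{N}(X)$ has small spectral radius (spherical twists can act with finite order or unipotently on $\mathcal{N}(X)$) while the mass $m_\sigma(\Phi^n G)$, and hence $h_0(\Phi)$, grows strictly faster than $\log\rho([\Phi])$. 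The ``substantial mass can persist while the total numerical class is small'' scenario you flag is not merely a technical obstacle --- it actually occurs.

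What the paper does instead is restrict to a class of autoequivalences for which your step three \emph{can} be carried out: it introduces compatible triples $(\Phi,\sigma,g)$ satisfying $\Phi\cdot\sigma=\sigma\cdot g$, so that $\Phi$ permutes the $\sigma$-semistable objects and sends the HN filtration of $E$ to that of $\Phi E$. In that situation the HN factors of $\Phi^n E$ are exactly $\Phi^n E_1,\dots,\Phi^n E_k$ and their central charges are $M_g^n Z_\sigma(E_i)$, so the mass is controlled by $\lVert M_g^n\rVert$ with no cancellation issue; together with the spanning-image hypothesis this gives $h_\sigma(\Phi)=\log\rho(\Phi)=\log\rho(M_g)$ (Theorem \ref{cat-gy-thm}). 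In other words, the paper's contribution is precisely to identify a categorical analogue of holomorphicity under which your strategy closes, rather than to prove the conjecture in general --- which is impossible. If you want a correct statement to prove along your lines, aim at Theorem \ref{cat-gy-thm} or its corollaries, not at Conjecture \ref{kt-conj}.
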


It is now known that Conjecture \ref{kt-conj} is true in many cases \cite{BK1,Fan2,Kik1,KST,KT,Kim,Ouc,Yos} and also that there are counterexamples \cite{BK1,Fan1,Mat,Ouc}.
Therefore it is important to characterize exact endofunctors which do or do not satisfy the Gromov--Yomdin type theorem.

In this paper, we consider more generally a numerically finite triangulated category $\mathcal{D}$ and ask when an exact autoequivalence $\Phi : \mathcal{D} \to \mathcal{D}$ satisfies
\begin{equation*}
h_0(\Phi) = \log\rho([\Phi] : \mathcal{N}(\mathcal{D}) \to \mathcal{N}(\mathcal{D}))
\end{equation*}
where $\mathcal{N}(\mathcal{D})$ denotes the {\em numerical Grothendieck group} of $\mathcal{D}$.

\subsection{Motivation from homological mirror symmetry}

A categorical analogue of Theorem \ref{yom-thm} (Yomdin's inequality) is known to hold quite generally (see Section \ref{yom-sec}).
The problem is to establish a categorical analogue of Theorem \ref{gro-thm} (Gromov's inequality).
In order to do that, we first have to find a categorical analogue of a holomorphic automorphism.
We shall do this through the philosophy of the homological mirror symmetry conjecture \cite{Kon}.

The homological mirror symmetry conjecture states that, for a Calabi--Yau manifold $(X,\omega)$ (viewed as a symplectic manifold), there is another Calabi--Yau manifold $(X^\vee,\Omega^\vee)$ (viewed as a complex manifold) such that
\begin{equation}\label{hms}
D^\pi\mathrm{Fuk}(X,\omega) \simeq D^b\mathrm{Coh}(X^\vee,\Omega^\vee)
\end{equation}
where $D^\pi\mathrm{Fuk}(X,\omega)$ denotes the split-closed derived Fukaya category of $(X,\omega)$.

It is believed that the equivalence \eqref{hms} induces an isomorphism between the {\em complex moduli space} of $(X,\omega)$ and the {\em K\"ahler moduli space} of $(X^\vee,\Omega^\vee)$.
Bridgeland \cite{Bri2} and Joyce \cite{Joy} conjectured that this isomorphism can be understood as an isomorphism between (some quotients of) the spaces of {\em stability conditions}.
Roughly speaking, they conjectured that the space of stability conditions on $D^\pi\mathrm{Fuk}(X,\omega)$ can be described as follows:
\begin{center}
\begin{tabular}{|c|c|}\hline
symplectic manifold $(X,\omega)$ & derived Fukaya category $D^\pi\mathrm{Fuk}(X,\omega)$\\ \hline
holomorphic volume form $\Omega$ & stability condition $\sigma$\\ \hline
special Lagrangian $L$ of phase $\phi$ & semistable object $E$ of phase $\phi$\\ \hline
period integral $\int_L \Omega$ & central charge $Z_\sigma(E)$\\ \hline
\end{tabular}
\end{center}

A choice of a complex structure on $(X,\omega)$ determines a holomorphic volume form (up to scalar).
Now consider a symplectomorphism $f : X \to X$.
Assume that there is a complex structure on $X$ with respect to which $f$ is holomorphic.
Let us fix a holomorphic volume form $\Omega$ determined by that complex structure.
The holomorphicity then implies that $f^* : H^*(X,\mathbb{C}) \to H^*(X,\mathbb{C})$ preserves the Hodge decomposition.
Hence, as $\dim H^{n,0}(X,\mathbb{C}) = 1$, it follows that
\begin{equation}\label{holo}
f^*\Omega = e^{-i\pi\psi}\Omega
\end{equation}
for some $\psi \in \mathbb{R}$.

Let us interpret the condition \eqref{holo} in terms of stability conditions using Bridgeland--Joyce's idea.
Let $f_* : D^\pi\mathrm{Fuk}(X,\omega) \to D^\pi\mathrm{Fuk}(X,\omega)$ be the exact autoequivalence induced by $f$ and $\sigma_\Omega$ be the stability condition on $D^\pi\mathrm{Fuk}(X,\omega)$ induced by $\Omega$.
Then, using the left action of $f_* \in \mathrm{Aut}(D^\pi\mathrm{Fuk}(X,\omega))$ and the right action of $\psi \in \mathbb{R} \subset \mathbb{C}$ on $\sigma_\Omega \in \mathrm{Stab}(D^\pi\mathrm{Fuk}(X,\omega))$, we can interpret the equation \eqref{holo} as
\begin{equation}\label{holo-stab}
f_* \cdot \sigma_\Omega = \sigma_\Omega \cdot \psi.
\end{equation}

In summary, we can categorify one of the properties of a holomorphic automorphism.
We will use this property to formulate the Gromov--Yomdin type theorems for the categorical entropy and its relatives.

\subsection{Main results}

Let us now state our main results.
For the sake of simplicity, we shall only consider numerical stability conditions in this section.
In the main part of the paper, we will consider more generally stability conditions whose central charges factor through some finite lattice (not necessarily the numerical Grothendieck group).

For a numerically finite triangulated category $\mathcal{D}$, let $\mathrm{Aut}(\mathcal{D})$ be the group of exact autoequivalences of $\mathcal{D}$ and $\mathrm{Stab}_\mathcal{N}^\dagger(\mathcal{D})$ be a connected component of the space of numerical stability conditions on $\mathcal{D}$.
Denote by $\widetilde{GL}^+(2,\mathbb{R})$ the universal cover of $GL^+(2,\mathbb{R})$ which consists of elements $(M,f)$ where $M \in GL^+(2,\mathbb{R})$ and $f : \mathbb{R} \to \mathbb{R}$ is an increasing function satisfying $f(\phi+1)=f(\phi)+1$ and $Me^{i\pi\phi} \in \mathbb{R}_{>0}e^{i\pi f(\phi)}$.
The group $\mathrm{Aut}(\mathcal{D})$ (resp. $\widetilde{GL}^+(2,\mathbb{R})$) acts on $\mathrm{Stab}_\mathcal{N}^\dagger(\mathcal{D})$ from the left (resp. right) (see Section \ref{stab-sec}).

In view of the condition \eqref{holo-stab}, we introduce the following notion.

\begin{dfn}[see Definition \ref{triple}]\label{triple-intro}
A triple $(\Phi,\sigma,g) \in \mathrm{Aut}(\mathcal{D}) \times \mathrm{Stab}_\mathcal{N}^\dagger(\mathcal{D}) \times \widetilde{GL}^+(2,\mathbb{R})$ is called {\em compatible} if it satisfies
\begin{equation}\label{comp}
\Phi \cdot \sigma = \sigma \cdot g.
\end{equation}
\end{dfn}

\begin{rmk}
When $g \in \widetilde{GL}^+(2,\mathbb{R})$ is given by
\begin{equation*}
\left( \begin{pmatrix} \cos\pi\psi & -\sin\pi\psi\\ \sin\pi\psi & \cos\pi\psi \end{pmatrix}, \phi \mapsto \phi+\psi \right)
\end{equation*}
for some $\psi \in \mathbb{R}$, the condition \eqref{comp} corresponds to the condition \eqref{holo-stab} and therefore it can be considered as a categorical analogue of the condition \eqref{holo}.
Thus Definition \ref{triple-intro} categorifies (and generalizes) the notion of a holomorphic automorphism.
\end{rmk}

Our first main theorem states that, for such $\Phi$, the Gromov--Yomdin type theorems hold for the mass growth $h_{\sigma,t}(\Phi)$ and the polynomial mass growth $h_{\sigma,t}^\mathrm{pol}(\Phi)$ (see Definitions \ref{mass-grow} and \ref{pol-mass-grow}) under some mild assumption.

Let us say the central charge $Z_\sigma : \mathcal{N}(\mathcal{D}) \to \mathbb{C}$ has {\em spanning image} if the set
\begin{equation*}
\{Z_\sigma(E) \,|\, E \text{ is } \sigma \text{-semistable} \}
\end{equation*}
is an $\mathbb{R}$-spanning set of $\mathbb{C} \cong \mathbb{R}^2$.

\begin{thm}[see Theorem \ref{cat-gy-thm}]\label{main-thm-gy}
Let $(\Phi,\sigma,g) \in \mathrm{Aut}(\mathcal{D}) \times \mathrm{Stab}_\mathcal{N}^\dagger(\mathcal{D}) \times \widetilde{GL}^+(2,\mathbb{R})$ be a compatible triple such that $Z_\sigma$ has spanning image.
Then we have the following:
\begin{enumerate}
\item $h_{\sigma,0}(\Phi) = \log\rho(\Phi) = \log\rho(M_g)$.
\item $h_{\sigma,0}^\mathrm{pol}(\Phi) = s(\Phi) = s(M_g)$.
\end{enumerate}
Here, for a linear map $A$, the {\em polynomial growth} $s(A)$ is given by one less than the maximal size of the Jordan blocks of $A$ whose eigenvalues have absolute value $\rho(A)$ (see Definition \ref{pol-grow} and Lemma \ref{jordan}).
\end{thm}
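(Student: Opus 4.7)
The plan is to exploit the compatibility $\Phi \cdot \sigma = \sigma \cdot g$ to compute the mass of $\Phi^n G$ directly from linear-algebraic data about $M_g$, and then sandwich the result against $\rho(\Phi)$ and $s(\Phi)$ using the categorical Yomdin-type inequalities.

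First I would unpack the condition $\Phi \cdot \sigma = \sigma \cdot g$ into two operative statements: (i) $\Phi$ sends $\sigma$-semistable objects of phase $\phi$ to $\sigma$-semistable objects of phase $f_g(\phi)$, and (ii) $Z_\sigma \circ [\Phi] = M_g \circ Z_\sigma$ as maps $\mathcal{N}(\mathcal{D}) \to \mathbb{C}$. Iterating, if $G$ is a split-generator with Harder--Narasimhan factors $A_1,\ldots,A_r$, then the HN factors of $\Phi^n G$ are $\Phi^n A_1,\ldots,\Phi^n A_r$ with central charges $M_g^n Z_\sigma(A_i)$; in particular
\begin{equation*}
m_{\sigma,0}(\Phi^n G) = \sum_{i=1}^{r} |M_g^n Z_\sigma(A_i)|.
\end{equation*}

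Next, using the spanning image hypothesis, I would pick $\sigma$-semistable objects $E_1,\ldots,E_k$ whose central charges span $\mathbb{C} \cong \mathbb{R}^2$ and replace $G$ by $G \oplus E_1 \oplus \cdots \oplus E_k$, so that $E_1,\ldots,E_k$ appear among its HN factors. A Jordan-form computation (cf.\ Lemma \ref{jordan}) then produces constants $0 < c \leq C < \infty$ with
\begin{equation*}
c \cdot n^{s(M_g)} \rho(M_g)^n \leq \sum_{i} |M_g^n Z_\sigma(A_i)| \leq C \cdot n^{s(M_g)} \rho(M_g)^n
\end{equation*}
for $n \gg 0$, the upper bound coming from $\|M_g^n\| \sum_i |Z_\sigma(A_i)|$ and the lower bound from the fact that spanning vectors $\{Z_\sigma(E_j)\}$ must witness a largest Jordan block attached to an eigenvalue of maximal modulus. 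This immediately yields $h_{\sigma,0}(\Phi) = \log\rho(M_g)$ and $h_{\sigma,0}^\mathrm{pol}(\Phi) = s(M_g)$.

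Finally, to close the loop with $\rho(\Phi)$ and $s(\Phi)$, the spanning image hypothesis forces $Z_\sigma : \mathcal{N}(\mathcal{D}) \otimes_\mathbb{Z} \mathbb{R} \twoheadrightarrow \mathbb{R}^2$ to be surjective, so the intertwining $Z_\sigma \circ [\Phi] = M_g \circ Z_\sigma$ identifies $M_g$ with the operator induced by $[\Phi]$ on the quotient by the $[\Phi]$-invariant subspace $\ker Z_\sigma$. Hence $\rho(M_g) \leq \rho(\Phi)$ and $s(M_g) \leq s(\Phi)$. Combined with the categorical Yomdin-type inequalities $h_{\sigma,0}(\Phi) \geq \log\rho(\Phi)$ and (in its polynomial refinement) $h_{\sigma,0}^\mathrm{pol}(\Phi) \geq s(\Phi)$ from the paper's Yomdin section, the two chains of inequalities collapse into the claimed equalities. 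The main obstacle I anticipate is the lower bound on $\sum_i |M_g^n Z_\sigma(A_i)|$: spanning in $\mathbb{R}^2$ gives the required Jordan-block witness in principle, but oscillations coming from complex eigenvalues of maximal modulus mean the lower bound must be extracted from the sum as a whole rather than from a single summand, and this requires a careful analysis on the real Jordan form of $M_g$.
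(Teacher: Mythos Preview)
Your approach is correct and structurally the same as the paper's: unpack compatibility into the intertwining relation $Z_\sigma \circ [\Phi] = M_g \cdot Z_\sigma$ plus preservation of semistables, bound $m_\sigma(\Phi^n E) \leq \lVert M_g^n\rVert\, m_\sigma(E)$ to get $h_\sigma(\Phi) \leq \log\rho(M_g)$ (and similarly $h_\sigma^{\mathrm{pol}}(\Phi) \leq s(M_g)$), and then close the chain using the Yomdin inequalities together with the fact that the eigenvalues and Jordan-block sizes of $M_g$ are inherited from $[\Phi]$. The paper phrases this last step via the minimal polynomial (Cayley--Hamilton applied through $Z_\sigma$), which is exactly your quotient-by-$\ker Z_\sigma$ observation in disguise.

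Your direct lower bound $\sum_i |M_g^n Z_\sigma(A_i)| \geq c\,n^{s(M_g)}\rho(M_g)^n$ is \emph{unnecessary}: once you have the upper bound $h_\sigma(\Phi) \leq \log\rho(M_g)$, the chain $\log\rho(M_g) \leq \log\rho(\Phi) \leq h_\sigma(\Phi)$ from the quotient argument and Proposition~\ref{yom-mass-grow} already forces equality, and likewise for the polynomial version. So the ``main obstacle'' you flag simply does not arise. One small ordering caveat: the inequality $s(M_g) \leq s(\Phi)$ does not follow from the quotient structure alone (a quotient can have larger Jordan blocks at \emph{its} spectral radius than the original operator has at \emph{its own}); you need $\rho(M_g) = \rho(\Phi)$ first, which is why the paper proves part~(1) before invoking it in part~(2).
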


Assume that $\mathcal{D}$ is moreover {\em saturated}, i.e., it admits a smooth and proper dg enhancement.
In this case, we can define the {\em shifting number} $\overline{\nu}(\Phi)$ and the {\em polynomial shifting number} $\overline{\nu}^\mathrm{pol}(\Phi)$ (see Definitions \ref{shift-num} and \ref{pol-shift-num}).
The following theorem says that if $(\Phi,\sigma,g) \in \mathrm{Aut}(\mathcal{D}) \times \mathrm{Stab}_\mathcal{N}^\dagger(\mathcal{D}) \times \widetilde{GL}^+(2,\mathbb{R})$ is a compatible triple then the (polynomial) categorical entropy and the (polynomial) mass growth of $\Phi$ are linear in $t$ and their slopes are given by the (polynomial) shifting number.

\begin{thm}[see Proposition \ref{linearity}]\label{main-thm-lin}
Suppose $\mathcal{D}$ is saturated and let $(\Phi,\sigma,g) \in \mathrm{Aut}(\mathcal{D}) \times \mathrm{Stab}_\mathcal{N}^\dagger(\mathcal{D}) \times \widetilde{GL}^+(2,\mathbb{R})$ be a compatible triple.
Then we have the following:
\begin{enumerate}
\item $h_t(\Phi) = h_0(\Phi) + \overline{\nu}(\Phi) \cdot t$.
\item $h_{\sigma,t}(\Phi) = h_{\sigma,0}(\Phi) + \overline{\nu}(\Phi) \cdot t$.
\item $h_t^\mathrm{pol}(\Phi) = h_0^\mathrm{pol}(\Phi) + \overline{\nu}^\mathrm{pol}(\Phi) \cdot t$ under the assumption that $h_0(\Phi) = 0$.
\item $h_{\sigma,t}^\mathrm{pol}(\Phi) = h_{\sigma,0}^\mathrm{pol}(\Phi) + \overline{\nu}^\mathrm{pol}(\Phi) \cdot t$ under the assumption that $h_0(\Phi) = 0$.
\end{enumerate}
\end{thm}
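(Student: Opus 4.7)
The proof pivots on iterating the compatibility equation. From $\Phi \cdot \sigma = \sigma \cdot g$, an easy induction yields $\Phi^n \cdot \sigma = \sigma \cdot g^n$ for all $n \geq 0$. Writing $g = (M_g, f_g)$, this translates into the following concrete transformation laws on Harder--Narasimhan data: if $E$ has HN factors $A_1, \ldots, A_k$ of phases $\phi_1, \ldots, \phi_k$ with respect to $\sigma$, then $\Phi^n E$ has HN factors $\Phi^n A_1, \ldots, \Phi^n A_k$ of phases $f_g^n(\phi_i)$, with central charges $Z_\sigma(\Phi^n A_i) = M_g^n Z_\sigma(A_i)$.

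I would prove (2) first, by directly expanding
\begin{equation*}
m_{\sigma,t}(\Phi^n E) = \sum_{i=1}^k e^{t f_g^n(\phi_i)} \, \bigl|M_g^n Z_\sigma(A_i)\bigr|.
\end{equation*}
Since $f_g$ lifts a homeomorphism of $\mathbb{R}/\mathbb{Z}$, the Poincar\'e translation number $\tau(f_g) := \lim_n n^{-1} f_g^n(\phi)$ exists, is independent of $\phi$, and the deviation $f_g^n(\phi) - n \tau(f_g) - \phi$ is uniformly bounded in $n$ and $\phi$. Consequently the quotient of $m_{\sigma,t}(\Phi^n E)$ by $e^{tn\tau(f_g)} m_{\sigma,0}(\Phi^n E)$ lies between two positive constants depending only on $t$ and $E$, and taking $\limsup n^{-1}\log$ gives $h_{\sigma,t}(\Phi) = h_{\sigma,0}(\Phi) + t\, \tau(f_g)$. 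A separate lemma identifies $\tau(f_g)$ with $\overline{\nu}(\Phi)$: in the compatible setting the highest phase satisfies $\phi^+_\sigma(\Phi^n G) = f_g^n(\phi^+_\sigma(G))$ for any split generator $G$, and in a saturated category this quantity (divided by $n$) computes the shifting number.

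Part (1) then follows by comparing categorical entropy with mass growth. The inequality $h_{\sigma,t}(\Phi) \leq h_t(\Phi)$ is general, so (2) supplies the lower bound. For the upper bound I would decompose $\Phi^n G$ along its HN filtration into the semistable pieces $\Phi^n A_i$, whose phases are controlled by $f_g^n(\phi_i)$, and use saturation to assemble cone presentations of $\Phi^n G$ whose $t$-weighted sizes match $m_{\sigma,t}(\Phi^n G)$ up to a factor independent of $n$. Combined with the identification $\tau(f_g) = \overline{\nu}(\Phi)$, this yields $h_t(\Phi) = h_0(\Phi) + \overline{\nu}(\Phi) \cdot t$.

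For (3) and (4), the hypothesis $h_0(\Phi) = 0$ places us in the polynomial growth regime, so that the relevant norms grow polynomially in $n$ and the Jordan-block-size invariant $s(M_g)$ dictates the leading rate of $\|M_g^n\|$. The polynomial shifting number $\overline{\nu}^\mathrm{pol}(\Phi)$ captures, in the compatible setting, the polynomial-in-$n$ correction of $f_g^n(\phi)$ beyond the leading linear term $n\,\tau(f_g)$, and an expansion of the polynomial mass parallel to the one in (2) (with polynomial rather than exponential weights) produces the linearity with slope $\overline{\nu}^\mathrm{pol}(\Phi)$; the complexity-side statement is then obtained as in (1). I expect the main obstacle to lie in (1) and (3): the mass-side computation is algebraically transparent thanks to the compatibility, but translating it into complexity bounds requires a careful use of the saturation hypothesis to build cone decompositions that realize the expected $t$-weighted estimates, and in the polynomial case additional care is needed to track subleading terms without being swamped by cancellations.
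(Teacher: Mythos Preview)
Your argument for (2) is correct and is essentially the same mechanism the paper uses, just unpacked: the paper packages the bounded deviation of $f_g^n(\phi)$ from $n\tau(f_g)$ into the statement $\overline{\nu}(\Phi)=\underline{\nu}(\Phi)$ (quoted from Kuznetsov--Perry), and then invokes pre-established sandwich inequalities $h_{\sigma,0}(\Phi)+\underline{\nu}(\Phi)\,t \le h_{\sigma,t}(\Phi) \le h_{\sigma,0}(\Phi)+\overline{\nu}(\Phi)\,t$ valid for any endofunctor of a saturated category. Your direct expansion of $m_{\sigma,t}(\Phi^nE)$ reaches the same conclusion by the same underlying Poincar\'e rotation-number estimate.

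The genuine gap is in your route to (1). Bounding $\delta_t(G,\Phi^nG)$ from above by assembling cone decompositions out of the HN pieces $\Phi^nA_i$ does not obviously work: each $\Phi^nA_i$ is $\sigma$-semistable, but that says nothing about its complexity relative to $G$, and there is no reason $\delta_0(G,\Phi^nA_i)$ (equivalently $\epsilon_0(G,\Phi^nA_i)$ in the saturated setting) stays bounded as $n\to\infty$. So the factor ``independent of $n$'' you need is not available from this construction. The paper sidesteps the issue entirely: it uses the general inequalities $h_0(\Phi)+\underline{\nu}(\Phi)\,t \le h_t(\Phi) \le h_0(\Phi)+\overline{\nu}(\Phi)\,t$ for $t\ge 0$ (and the mirrored ones for $t\le 0$), which hold for \emph{any} endofunctor of a saturated category and involve no stability condition at all. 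The only compatibility-specific input is then $\overline{\nu}(\Phi)=\underline{\nu}(\Phi)$, which is exactly your observation that $\phi^\pm_\sigma(\Phi^nG)=f_g^n(\phi^\pm_\sigma(G))$ both have the same translation number. In other words, your Poincar\'e-number computation already contains the key step for (1); you should feed it into the general sandwich bounds rather than attempt a direct complexity estimate.

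The same remark applies to (3) and (4): the paper again uses general polynomial sandwich inequalities together with the polynomial analogue $\overline{\nu}^{\mathrm{pol}}(\Phi)=\underline{\nu}^{\mathrm{pol}}(\Phi)$, which follows from the bound $f_g^n(\phi_1)-f_g^n(\phi_2)=O(1)$ (hence $o(\log n)$). Your proposed direct polynomial expansion on the complexity side would face the same obstruction as in (1).
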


As a corollary, we obtain the following necessary and sufficient conditions for the Gromov--Yomdin type theorems for the categorical entropy $h_t(\Phi)$ and the polynomial categorical entropy $h_t^\mathrm{pol}(\Phi)$ (see Definitions \ref{cat-ent} and \ref{pol-cat-ent}).

\begin{thm}[see Corollary \ref{iff-cor}]\label{main-thm-iff}
Suppose $\mathcal{D}$ is saturated and let $(\Phi,\sigma,g) \in \mathrm{Aut}(\mathcal{D}) \times \mathrm{Stab}_\mathcal{N}^\dagger(\mathcal{D}) \times \widetilde{GL}^+(2,\mathbb{R})$ be a compatible triple such that $Z_\sigma$ has spanning image.
Then we have the following:
\begin{enumerate}
\item $h_0(\Phi) = \log\rho(\Phi)$ if and only if $h_t(\Phi) = h_{\sigma,t}(\Phi)$ for all $t \in \mathbb{R}$.
\item $h_0^\mathrm{pol}(\Phi) = s(\Phi)$ if and only if $h_t^\mathrm{pol}(\Phi) = h_{\sigma,t}^\mathrm{pol}(\Phi)$ for all $t \in \mathbb{R}$ under the assumption that $h_0(\Phi) = 0$.
\end{enumerate}
\end{thm}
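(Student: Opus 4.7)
The plan is to derive Theorem \ref{main-thm-iff} as a direct consequence of Theorems \ref{main-thm-gy} and \ref{main-thm-lin}, which together pin down both entropy functions as affine functions of $t$ with explicit slopes and intercepts. The idea is that, under the compatibility assumption, the ordinary and mass-growth versions differ by at most their intercepts at $t=0$, so comparing them reduces to comparing intercepts, and the intercept of the mass growth side has already been identified with $\log\rho(\Phi)$ (resp.\ $s(\Phi)$) in Theorem \ref{main-thm-gy}.

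For part (1), I would first use Theorem \ref{main-thm-lin}(1)--(2) to write
\begin{equation*}
h_t(\Phi) = h_0(\Phi) + \overline{\nu}(\Phi)\cdot t, \qquad h_{\sigma,t}(\Phi) = h_{\sigma,0}(\Phi) + \overline{\nu}(\Phi)\cdot t,
\end{equation*}
so that $h_t(\Phi) = h_{\sigma,t}(\Phi)$ holds for all $t$ if and only if it holds at $t=0$. Then, by Theorem \ref{main-thm-gy}(1), the spanning-image hypothesis gives $h_{\sigma,0}(\Phi) = \log\rho(\Phi)$, so the equality at $t=0$ is precisely $h_0(\Phi) = \log\rho(\Phi)$. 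This yields both directions simultaneously.

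For part (2), under the standing assumption $h_0(\Phi)=0$, I would apply Theorem \ref{main-thm-lin}(3)--(4) to obtain
\begin{equation*}
h_t^{\mathrm{pol}}(\Phi) = h_0^{\mathrm{pol}}(\Phi) + \overline{\nu}^{\mathrm{pol}}(\Phi)\cdot t, \qquad h_{\sigma,t}^{\mathrm{pol}}(\Phi) = h_{\sigma,0}^{\mathrm{pol}}(\Phi) + \overline{\nu}^{\mathrm{pol}}(\Phi)\cdot t.
\end{equation*}
Equality of the two sides as functions of $t$ is then equivalent to equality of the intercepts, and Theorem \ref{main-thm-gy}(2) identifies $h_{\sigma,0}^{\mathrm{pol}}(\Phi)$ with $s(\Phi)$, giving the claimed biconditional.

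There is essentially no serious obstacle here: the theorem is formal once one has the linearity in $t$ (with matching slopes on both sides) from Theorem \ref{main-thm-lin} together with the Gromov--Yomdin type identities at $t=0$ from Theorem \ref{main-thm-gy}. The only point to verify carefully is that the slopes of the ordinary and mass-growth versions really do coincide in each pair, i.e.\ that the $\overline{\nu}(\Phi)$ appearing in (1) and (2) of Theorem \ref{main-thm-lin} is literally the same number, and likewise for $\overline{\nu}^{\mathrm{pol}}(\Phi)$ in (3) and (4); this is exactly how Theorem \ref{main-thm-lin} is stated, so the argument goes through.
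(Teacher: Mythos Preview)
Your proposal is correct and follows essentially the same approach as the paper: the proof of Corollary \ref{iff-cor} likewise combines the linearity from Proposition \ref{linearity} (your Theorem \ref{main-thm-lin}) with the identification of the mass-growth intercept via Theorem \ref{cat-gy-thm} (your Theorem \ref{main-thm-gy}). Your presentation is slightly more streamlined in that you handle both implications at once by reducing to equality of intercepts, whereas the paper treats the two directions separately, but the content is the same.
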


Let again $(\Phi,\sigma,g) \in \mathrm{Aut}(\mathcal{D}) \times \mathrm{Stab}_\mathcal{N}^\dagger(\mathcal{D}) \times \widetilde{GL}^+(2,\mathbb{R})$ be a compatible triple.
We also study the relationship between the mass growth $h_{\sigma,0}(\Phi)$ and the {\em stable translation length} $\tau^s(\Phi)$ of the action of $\Phi$ on $\mathrm{Stab}_\mathcal{N}^\dagger(\mathcal{D})/\mathbb{C}$ (see Definition \ref{tran-leng}).

\begin{thm}[see Theorem \ref{tran-leng-thm}]
Let $(\Phi,\sigma,g) \in \mathrm{Aut}(\mathcal{D}) \times \mathrm{Stab}_\mathcal{N}^\dagger(\mathcal{D}) \times \widetilde{GL}^+(2,\mathbb{R})$ be a compatible.
Then we have
\begin{equation*}
\tau^s(\Phi) = \log\frac{\rho(M_g)}{\det(M_g)^{1/2}}.
\end{equation*}
In particular, if $Z_\sigma$ has spanning image and $\det(M_g)=1$ then
\begin{equation*}
\tau^s(\Phi) = \log\rho(M_g) = \log\rho(\Phi) = h_{\sigma,0}(\Phi).
\end{equation*}
\end{thm}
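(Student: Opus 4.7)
The plan is to reduce $\tau^s(\Phi)$ to an explicit asymptotic calculation with the $2\times 2$ matrix $M_g$, by using compatibility to pin the entire $\Phi$-orbit of $\sigma$ inside a single $\widetilde{GL}^+(2,\mathbb{R})$-orbit.

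First, I would iterate the compatibility relation $\Phi\cdot\sigma = \sigma\cdot g$ to obtain $\Phi^n\cdot\sigma = \sigma\cdot g^n$ for every $n\geq 0$, reducing the problem to
\[
\tau^s(\Phi) = \lim_{n\to\infty} \frac{d([\sigma], [\sigma\cdot g^n])}{n},
\]
where $d$ denotes the natural (pseudo-)metric on $\mathrm{Stab}_\mathcal{N}^\dagger(\mathcal{D})/\mathbb{C}$. The task becomes controlling how quickly the orbit $\sigma\cdot g^n$ drifts away from the $\mathbb{C}$-orbit of $\sigma$.

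Second, I would unfold Bridgeland's metric, which is the supremum over (classes of) semistable objects of phase differences and $\log$-ratios of masses. For $h\in\widetilde{GL}^+(2,\mathbb{R})$, the quotient by $\mathbb{C}$ permits multiplying $M_h$ by any $re^{i\theta}\in\mathbb{C}^*$ and shifting $f_h$ correspondingly. The mass-ratio contribution to $d(\sigma,\sigma\cdot h)$ is the supremum of $\log\max\bigl\{|Z_\sigma(E)|/|M_h^{-1}Z_\sigma(E)|,\, |M_h^{-1}Z_\sigma(E)|/|Z_\sigma(E)|\bigr\}$ over semistable $E$; optimizing $r$ to balance the two terms makes this exactly $\log(\|M_h\|/\det(M_h)^{1/2})$ where $\|\cdot\|$ is the operator norm, and the phase contribution is simultaneously tamed by an optimal choice of $\theta$ together with a $\mathbb{Z}$-shift of $f_h$.

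Third, specializing to $h=g^n$ and using $M_{g^n}=M_g^n$ together with $\det(M_{g^n})=\det(M_g)^n$, Gelfand's formula $\|M_g^n\|^{1/n}\to\rho(M_g)$ yields
\[
\tau^s(\Phi) = \log\rho(M_g) - \tfrac{1}{2}\log\det(M_g) = \log\frac{\rho(M_g)}{\det(M_g)^{1/2}},
\]
as claimed. The main obstacle I anticipate is showing rigorously that the phase-distortion term does not spoil this estimate: $g^n$ acts on phases by the lift $f_g^n$, so the naive supremum $\sup_\phi |\phi - f_g^n(\phi) - \theta|$ can a priori grow, and one must either exploit the freedom in $\theta$ together with the $\mathbb{Z}$-shift of $f_h$, or restrict the supremum to the set of phases actually attained by $\sigma$-semistable classes, to keep that contribution $O(1)$. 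Finally, the ``In particular'' clause is immediate: when $Z_\sigma$ has spanning image, Theorem~\ref{main-thm-gy}(1) gives $\log\rho(M_g)=\log\rho(\Phi)=h_{\sigma,0}(\Phi)$, and setting $\det(M_g)=1$ collapses the main formula to this common value.
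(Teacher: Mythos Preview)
Your outline is essentially the paper's argument: iterate compatibility to get $\Phi^n\cdot\sigma=\sigma\cdot g^n$, expand $d_B(\sigma,\sigma\cdot g^n\alpha)$ into a phase term and a mass term, optimize the real and imaginary parts of $\alpha$ separately, and finish with Gelfand's formula applied to $M_g'=M_g/\det(M_g)^{1/2}\in SL(2,\mathbb{R})$. The paper makes your ``tamed by an optimal choice of $\theta$'' precise by setting $\Re\alpha=-f_g^n(0)$; since $f_g^n-\mathrm{id}$ is $1$-periodic and increasing, this forces the phase term below $1$ uniformly in $n$, which is exactly the $O(1)$ bound you want.

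There is, however, a genuine gap in your lower bound. When you write that the optimized mass contribution is \emph{exactly} $\log(\lVert M_h\rVert/\det(M_h)^{1/2})$ with the operator norm, you have silently replaced the supremum over $\sigma$-semistable classes by the supremum over all of $\mathbb{R}^2$. That substitution is harmless for the upper bound on $\tau^s(\Phi)$, but it is illegitimate for the lower bound: the restricted supremum can be strictly smaller. The paper resolves this by a case split. If $Z_\sigma$ has spanning image, then the set $S=\{Z_\sigma(E):E\ \sigma\text{-semistable}\}$ defines a genuine matrix norm $\lVert\cdot\rVert_S$ on $\mathbb{R}^2$ (this is Lemma~\ref{norm-lem}, and it uses that $M_g$ acts bijectively on $S$ to compute $\lVert M_g^{-n}\rVert_S$ as well), so Gelfand's formula still yields $\lVert M_g'^{\pm n}\rVert_S^{1/n}\to\rho(M_g')$ and the lower bound follows. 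If $Z_\sigma$ does \emph{not} have spanning image, the image lies on a line which the relation $Z_\sigma\circ[\Phi]_\Gamma=M_g\cdot Z_\sigma$ forces to be an eigenline of $M_g'$; a direct computation with the eigenvalue $\lambda$, together with $\rho(M_g')=\max\{\lvert\lambda\rvert,\lvert\lambda\rvert^{-1}\}$ for $M_g'\in SL(2,\mathbb{R})$, gives the lower bound in that case. Without this split your sketch establishes only $\tau^s(\Phi)\le\log\bigl(\rho(M_g)/\det(M_g)^{1/2}\bigr)$.
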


For a smooth projective curve $C$, it is known that $h_0(\Phi) = \log\rho(\Phi)$ for any $\Phi \in \mathrm{Aut}(D^b\mathrm{Coh}(C))$ \cite[Theorem 3.1]{Kik1}.
Moreover the space $\mathrm{Stab}_\mathcal{N}(D^b\mathrm{Coh}(C))$ is well-studied \cite{Bri1,Mac,Oka}.
Using this, we can check the technical conditions in Theorems \ref{main-thm-lin} and \ref{main-thm-iff} and obtain the following.

\begin{prop}[see Proposition \ref{curve-prop}]
Let $C$ be a smooth projective curve.
For any $\Phi \in \mathrm{Aut}(D^b\mathrm{Coh}(C))$ and $\sigma \in \mathrm{Stab}_\mathcal{N}(D^b\mathrm{Coh}(C))$, we have
\begin{equation*}
h_t(\Phi) = h_{\sigma,t}(\Phi) = \log\rho(\Phi) + \overline{\nu}(\Phi) \cdot t.
\end{equation*}
Furthermore, if $\Phi \in \mathrm{Aut}_\mathrm{std}(D^b\mathrm{Coh}(C))$ then $h_t(\Phi) = h_{\sigma,t}(\Phi) = \overline{\nu}(\Phi) \cdot t$.
\end{prop}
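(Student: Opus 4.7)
The plan is to verify, for every pair $(\Phi,\sigma)$ with $\mathcal{D}=D^b\mathrm{Coh}(C)$, the hypotheses of Theorems \ref{main-thm-lin} and \ref{main-thm-iff}, and then combine these conclusions with Kikuta's theorem $h_0(\Phi)=\log\rho(\Phi)$ from \cite[Theorem 3.1]{Kik1}. Saturation is automatic since $C$ is smooth projective. To produce a compatible $g$ for arbitrary $(\Phi,\sigma)$, I invoke the classifications of $\mathrm{Stab}_\mathcal{N}(D^b\mathrm{Coh}(C))$ in \cite{Bri1,Mac,Oka}: in each case the space is connected and $4$-dimensional, and the free $\widetilde{GL}^+(2,\mathbb{R})$-action has open orbits by a dimension count, hence is transitive by connectedness. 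Thus $\Phi\cdot\sigma$ and $\sigma$ lie in a single orbit, producing a unique $g$ with $\Phi\cdot\sigma=\sigma\cdot g$. Finally, $Z_\sigma$ has spanning image because this property is preserved by the $\widetilde{GL}^+(2,\mathbb{R})$-orbit (the action preserves the set of semistable objects and acts on central charges by invertible $\mathbb{R}$-linear maps), and it is clear for any slope-stability condition, since a skyscraper sheaf and any line bundle are semistable with $\mathbb{R}$-linearly independent central charges.

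With these hypotheses in place, Theorem \ref{main-thm-lin}(1),(2) gives
\begin{equation*}
h_t(\Phi)=h_0(\Phi)+\overline{\nu}(\Phi)\cdot t,\qquad h_{\sigma,t}(\Phi)=h_{\sigma,0}(\Phi)+\overline{\nu}(\Phi)\cdot t,
\end{equation*}
and Kikuta's equality $h_0(\Phi)=\log\rho(\Phi)$ lets me apply Theorem \ref{main-thm-iff}(1) to conclude $h_t(\Phi)=h_{\sigma,t}(\Phi)$ for all $t\in\mathbb{R}$. Substituting yields $h_t(\Phi)=h_{\sigma,t}(\Phi)=\log\rho(\Phi)+\overline{\nu}(\Phi)\cdot t$, which is the first displayed formula.

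For the \enquote{furthermore} clause, it suffices to show $\rho(\Phi)=1$ whenever $\Phi\in\mathrm{Aut}_\mathrm{std}(D^b\mathrm{Coh}(C))$. Identifying $\mathcal{N}(D^b\mathrm{Coh}(C))\cong\mathbb{Z}^2$ via the basis (rank, degree), the generating standard autoequivalences act by: the shift $[1]$ as $-\mathrm{id}$; pullback by $\phi\in\mathrm{Aut}(C)$ as $\mathrm{id}$ (both rank and degree are preserved); and tensor by a line bundle of degree $d$ by the unipotent matrix $\bigl(\begin{smallmatrix}1&0\\d&1\end{smallmatrix}\bigr)$. The subgroup they generate in $GL(2,\mathbb{Z})$ consists of matrices $\pm\bigl(\begin{smallmatrix}1&0\\d&1\end{smallmatrix}\bigr)$, each of spectral radius $1$, so $\log\rho(\Phi)=0$ and the previous formula specializes to $h_t(\Phi)=\overline{\nu}(\Phi)\cdot t$.

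The main obstacle is the transitivity of the $\widetilde{GL}^+(2,\mathbb{R})$-action on $\mathrm{Stab}_\mathcal{N}(D^b\mathrm{Coh}(C))$, which is needed to realize every $(\Phi,\sigma)$ as part of a compatible triple; this ultimately rests on the non-trivial geometric descriptions of \cite{Bri1,Mac,Oka}. Once this is granted, the remainder of the argument is essentially bookkeeping and a direct application of the main theorems of this paper.
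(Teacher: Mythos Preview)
Your overall strategy---produce a compatible triple, then apply Theorems \ref{main-thm-lin} and \ref{main-thm-iff} together with Kikuta's result---is exactly the paper's, and your argument goes through for $g(C)\geq 1$, where transitivity of the $\widetilde{GL}^+(2,\mathbb{R})$-action is indeed established in \cite{Bri1,Mac}. The direct matrix computation for the ``furthermore'' clause is also fine.

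The gap is in genus $0$. The $\widetilde{GL}^+(2,\mathbb{R})$-action on $\mathrm{Stab}_\mathcal{N}(D^b\mathrm{Coh}(\mathbb{P}^1))$ is \emph{not} free, so your open-orbit dimension count breaks down, and in fact the action is not transitive. Concretely, take the algebraic heart $\langle \mathcal{O}(-1)[1],\mathcal{O}\rangle$ and assign both simples the same phase; the image of $Z$ then lies on a real line and the stabilizer in $\widetilde{GL}^+(2,\mathbb{R})$ is $2$-dimensional. More to the point, if one instead chooses phases with $\phi(\mathcal{O})>\phi(\mathcal{O}(-1)[1])$, the resulting stability condition has a different set of semistable objects from $\sigma_0=(-\deg+i\cdot\mathrm{rk},\mathrm{Coh}(\mathbb{P}^1))$ and hence cannot lie in its $\widetilde{GL}^+(2,\mathbb{R})$-orbit. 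So you cannot manufacture a compatible $g$ for an arbitrary pair $(\Phi,\sigma)$ this way.

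The paper repairs this by treating $\mathbb{P}^1$ separately: first use connectedness of $\mathrm{Stab}_\mathcal{N}(D^b\mathrm{Coh}(\mathbb{P}^1))$ \cite{Oka} together with Lemma \ref{mass-grow-lem} to reduce to the single stability condition $\sigma_0$; then invoke Bondal--Orlov to get $\mathrm{Aut}(D^b\mathrm{Coh}(\mathbb{P}^1))=\mathrm{Aut}_\mathrm{std}$, and verify by the explicit computation (essentially the one you did for the ``furthermore'' clause, combined with Lemma \ref{criterion}) that each standard autoequivalence forms a compatible triple with $\sigma_0$. In other words, rather than finding a $g$ for every $\sigma$, one only needs a $g$ for one representative $\sigma$ per connected component.
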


For another example, let $X$ be an irreducible smooth projective variety of dimension $d \geq 2$.
We consider the triangulated category $D^b\mathrm{Coh}_{(1)}(X)$ defined as the quotient of the bounded derived category $D^b\mathrm{Coh}(X)$ by the full subcategory consisting of complexes whose support has codimension at least 1 \cite[Definition 3.1]{MP} (see Section \ref{MP-example}).
Meinhardt--Partsch \cite{MP} observed that the triangulated category $D^b\mathrm{Coh}_{(1)}(X)$ is closely related to the birational geometry of $X$ and showed that the space of stability conditions on $D^b\mathrm{Coh}_{(1)}(X)$ behaves roughly like that on a curve.
Using their results and Theorem \ref{main-thm-gy}, we can show the following.

\begin{prop}[see Proposition \ref{quot-prop}]
Let $X$ be an irreducible smooth projective variety of dimension $d \geq 2$ and Picard rank 1.
Then, for any $\Phi = f^*(- \otimes \mathcal{L})[m] \in \mathrm{Aut}(D^b\mathrm{Coh}_{(1)}(X))$ with $\mathcal{L}$ numerically trivial and $\sigma \in \mathrm{Stab}_\Gamma(D^b\mathrm{Coh}_{(1)}(X))$, we have
\begin{equation*}
h_{\sigma,0}(\Phi) = \log\rho(\Phi) = \log\rho(f^*|_{N^1(X)}).
\end{equation*}
\end{prop}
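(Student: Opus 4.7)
The plan is to reduce to Theorem \ref{main-thm-gy}: I want to produce $g \in \widetilde{GL}^+(2,\mathbb{R})$ such that $(\Phi,\sigma,g)$ is compatible, verify that $Z_\sigma$ has spanning image, and finally identify $\rho(\Phi)$ with $\rho(f^*|_{N^1(X)})$.

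For the compatibility, I would invoke Meinhardt--Partsch's description showing that $\mathrm{Stab}_\Gamma(D^b\mathrm{Coh}_{(1)}(X))$ is \emph{curve-like}: the lattice $\Gamma$ has rank $2$, and the $\mathrm{Aut}$-action on $\mathrm{Stab}_\Gamma$ is compatible with the natural transitive $\widetilde{GL}^+(2,\mathbb{R})$-action. Under the Picard rank $1$ hypothesis, a convenient basis for $\Gamma \otimes \mathbb{R}$ is given by rank together with the degree against the ample generator $H$ of $N^1(X)$. In this basis, tensoring with $\mathcal{L}$ induces the identity (since $\mathcal{L}$ is numerically trivial), the shift $[m]$ acts as $(-1)^m \cdot \mathrm{id}$, and $f^*$ acts as $\mathrm{diag}(1, d_f)$, where $f^*H \equiv d_f H$ in $N^1(X)$. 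Hence $[\Phi] \in GL^+(2,\mathbb{R})$, and the curve-like behaviour provides a lift $g \in \widetilde{GL}^+(2,\mathbb{R})$ with $M_g = [\Phi]$ realising $\Phi \cdot \sigma = \sigma \cdot g$.

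The spanning image of $Z_\sigma$ again follows from Meinhardt--Partsch's explicit description: for any $\sigma \in \mathrm{Stab}_\Gamma$ one can exhibit two $\sigma$-semistable objects whose central charges are non-parallel (for instance the image of $\mathcal{O}_X$ and that of a skyscraper along a codimension-one subvariety), so that $\{Z_\sigma(E) : E \text{ semistable}\}$ spans $\mathbb{C} \cong \mathbb{R}^2$. Theorem \ref{main-thm-gy} then yields $h_{\sigma,0}(\Phi) = \log \rho(\Phi) = \log \rho(M_g)$. Since $M_g = \mathrm{diag}(\pm 1, \pm d_f)$ has spectral radius $\max(1, |d_f|) = |d_f|$ (using $|d_f| \geq 1$, which holds because $f^*$ must act invertibly on the degree part of $\Gamma$ for $\Phi$ to be an autoequivalence), the chain of equalities $\rho(\Phi) = |d_f| = \rho(f^*|_{N^1(X)})$ is complete.

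The step I expect to demand the most attention is the first one: carefully extracting from \cite{MP} the identification $\Gamma \cong \mathbb{Z}^2$ under Picard rank $1$, the assertion that numerically trivial line bundles act trivially on $\Gamma$, and the fact that the autoequivalence action on $\mathrm{Stab}_\Gamma$ genuinely factors through $\widetilde{GL}^+(2,\mathbb{R})$. Once these three pieces are pinned down, the matrix form of $[\Phi]$, the spanning image verification, and the identification $\rho(\Phi) = \rho(f^*|_{N^1(X)})$ are all essentially formal, and the conclusion follows directly from Theorem \ref{main-thm-gy}.
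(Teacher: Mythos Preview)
Your approach is essentially the same as the paper's: construct a compatible triple and invoke Theorem~\ref{cat-gy-thm}. The execution differs slightly. The paper first reduces to the explicit stability condition $\sigma_\omega = (-\omega \cdot c_1 + i\cdot\mathrm{rk},\,\mathrm{Coh}_{(1)}(X))$ using Lemma~\ref{mass-grow-lem}, then checks compatibility via Lemma~\ref{criterion} by computing $Z_{\sigma_\omega}\circ[\Phi]_\Gamma$ and using \cite[Proposition 5.1]{MP} to see that $\Phi$ preserves $\mathrm{Coh}_{(1)}(X)$ up to shift. You instead argue abstractly from the free transitivity of the $\widetilde{GL}^+(2,\mathbb{R})$-action on each connected component (Theorem~\ref{quot-stab}). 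This is equally valid, but you should make explicit the one missing link: under Picard rank $1$, the cone $C(X)\subset N_1(X)\otimes\mathbb{R}\cong\mathbb{R}$ contains a single ray, so $\mathrm{Stab}_\Gamma(D^b\mathrm{Coh}_{(1)}(X))$ has a \emph{unique} connected component; without this, transitivity alone does not guarantee $\Phi\cdot\sigma$ lies in the orbit of $\sigma$. Two further cosmetic points: $M_g$ is not literally $[\Phi]_\Gamma$ but its conjugate via $Z_\sigma$ (which suffices for comparing spectral radii), and your spanning-image check for arbitrary $\sigma$ should either reduce to $\sigma_\omega$ as the paper does or note that spanning image is preserved along $\widetilde{GL}^+(2,\mathbb{R})$-orbits.
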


\subsection{Conventions}

All categories and functors between them are assumed to be linear over some field.
We also assume that all categories are essentially small and functors between triangulated categories are exact in the sense that they commute with the shift functors (up to functorial isomorphism) and send an exact triangle to an exact triangle.
Also, functors between the derived categories are assumed to be derived.
Whenever we need (especially when we consider the categorical entropy and its relatives), we assume the existence of a split-generator and a stability condition.

We denote the shift functor of a triangulated category by $[1]$ and its $m$-iterate by $[m]$ ($m \in \mathbb{Z}$).
An exact triangle $E \to F \to G \to E[1]$ in a triangulated category will often be depicted as
\begin{equation*}
\begin{tikzcd}
{E} \ar[rr] && {F} \ar[dl]\\
& {G} \ar[ul,dashed] &
\end{tikzcd}.
\end{equation*}

The following is the list of the main notions that we study in this paper:
\begin{center}
\begin{tabular}{|c|c|c|}\hline
$h_t$ & categorical entropy & \multirow{2}{*}{Definition \ref{cat-ent}}\\ \cline{1-2}
$h_\mathrm{cat}$ & categorical entropy at $t=0$ &\\ \hline
$h_t^\mathrm{pol}$ & polynomial categorical entropy & \multirow{2}{*}{Definition \ref{pol-cat-ent}}\\ \cline{1-2}
$h_\mathrm{cat}^\mathrm{pol}$ & polynomial categorical entropy at $t=0$ &\\ \hline
$h_{\sigma,t}$ & mass growth & \multirow{2}{*}{Definition \ref{mass-grow}}\\ \cline{1-2}
$h_\sigma$ & mass growth at $t=0$ &\\ \hline
$h_{\sigma,t}^\mathrm{pol}$ & polynomial mass growth & \multirow{2}{*}{Definition \ref{pol-mass-grow}}\\ \cline{1-2}
$h_\sigma^\mathrm{pol}$ & polynomial mass growth at $t=0$ &\\ \hline
$\overline{\nu}$ & upper shifting number & \multirow{2}{*}{Definition \ref{shift-num}}\\ \cline{1-2}
$\underline{\nu}$ & lower shifting number &\\ \hline
$\overline{\nu}^\mathrm{pol}$ & upper polynomial shifting number & \multirow{2}{*}{Definition \ref{pol-shift-num}}\\ \cline{1-2}
$\underline{\nu}^\mathrm{pol}$ & lower polynomial shifting number &\\ \hline
$\tau$ & translation length & \multirow{2}{*}{Definition \ref{tran-leng}}\\ \cline{1-2}
$\tau^s$ & stable translation length &\\ \hline
$\rho$ & spectral radius & Definition \ref{spec-rad}\\ \hline
$s$ & polynomial growth rate & Definition \ref{pol-grow}\\ \hline
\end{tabular}
\end{center}

\begin{ack}
The authors would like to thank Ed Segal for suggesting that, passed through homological mirror symmetry, the notion of holomorphicity might be formulated in terms of preserving a stability condition.
F.B. was supported by the European Research Council (ERC) under the European Union Horizon 2020 research and innovation programme (grant agreement No.725010).
J.K. was supported by the Institute for Basic Science (IBS-R003-D1).
\end{ack}

\section{Dynamical invariants}

\subsection{Categorical entropy}

Let us first recall the definition of the categorical entropy introduced by Dimitrov--Haiden--Katzarkov--Kontsevich \cite{DHKK}.
It is a categorical analogue of the topological entropy.

\begin{dfn}[{\cite[Definition 2.1]{DHKK}}]
Let $E,F$ be objects of a triangulated category $\mathcal{D}$.
The {\em categorical complexity} of $F$ with respect to $E$ is the function $\delta_t(E,F) : \mathbb{R} \to [0,\infty]$ in $t$ defined by
\begin{equation*}
\delta_t(E,F) \coloneqq \inf
\left\{\sum_{i=1}^k e^{n_i t} \,\left|\,
\begin{tikzcd}[column sep=tiny]
0 \ar[rr] & & * \ar[dl] \ar[rr] & & * \ar[dl] & \cdots & * \ar[rr] & & F \oplus F' \ar[dl]\\
& E[n_1] \ar[ul,dashed] & & E[n_2] \ar[ul,dashed] & & & & E[n_k] \ar[ul,dashed] &
\end{tikzcd}
\right.\right\}
\end{equation*}
if $F \neq 0$, and $\delta_t(E,F) \coloneqq 0$ if $F \cong 0$.
Here the limit runs over all possible cone decompositions of $F \oplus F'$ into $E[n_i]$'s for some $F'$.
\end{dfn}

Recall that an object $G$ of a triangulated category $\mathcal{D}$ is called a {\em split-generator} if the smallest full triangulated subcategory containing $G$ and closed under taking direct summands coincides with $\mathcal{D}$.

\begin{dfn}[{\cite[Definition 2.5]{DHKK}}]\label{cat-ent}
Let $\mathcal{D}$ be a triangulated category and $G$ be a split-generator of $\mathcal{D}$.
The {\em categorical entropy} of an endofunctor $\Phi : \mathcal{D} \to \mathcal{D}$ is the function $h_t(\Phi) : \mathbb{R} \to [-\infty,\infty)$ in $t$ defined by
\begin{equation*}
h_t(\Phi) \coloneqq \lim_{n \to \infty} \frac{\log\delta_t(G,\Phi^nG)}{n}.
\end{equation*}
We denote $h_\mathrm{cat}(\Phi) \coloneqq h_0(\Phi)$.
\end{dfn}

\begin{lem}[{\cite[Lemma 2.6]{DHKK},\cite[Lemma 2.6]{Kik1}}]
The categorical entropy is well-defined in the sense that the limit exists and it does not depend on the choice of a split-generator $G$.
Moreover
\begin{equation*}
h_t(\Phi) = \lim_{n \to \infty} \frac{\log\delta_t(G,\Phi^nG')}{n}
\end{equation*}
for any split-generators $G,G'$.
\end{lem}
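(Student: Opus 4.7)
The plan is to reduce everything to two elementary properties of the categorical complexity: (i) submultiplicativity under composition of cone decompositions, namely $\delta_t(E,G)\le \delta_t(E,F)\cdot \delta_t(F,G)$ for any $E,F,G\in\mathcal{D}$ (obtained by splicing a cone decomposition of $F\oplus F'$ by $E[n_i]$'s into a cone decomposition of $G\oplus G'$ by $F[m_j]$'s), and (ii) functoriality, $\delta_t(\Phi E,\Phi F)\le \delta_t(E,F)$, which is immediate since any cone decomposition of $F\oplus F'$ by $E[n_i]$ is sent by an exact functor to a cone decomposition of $\Phi F\oplus\Phi F'$ by $(\Phi E)[n_i]$. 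I would also record the easy shift relation $\delta_t(E,F[k])=e^{kt}\delta_t(E,F)$ and the fact that $\delta_t(G,F)<\infty$ for every $F\in\mathcal{D}$ whenever $G$ is a split-generator (this is where the split-closure in the definition of a split-generator is used; any $F$ appears as a summand of a finite iterated cone built from shifts of $G$).

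Existence of the limit then follows from Fekete's lemma. Set $a_n\coloneqq \log\delta_t(G,\Phi^n G)\in[-\infty,\infty)$. By (i) and (ii),
\begin{equation*}
\delta_t(G,\Phi^{n+m}G)\le \delta_t(G,\Phi^n G)\cdot \delta_t(\Phi^n G,\Phi^{n+m}G)\le \delta_t(G,\Phi^n G)\cdot \delta_t(G,\Phi^m G),
\end{equation*}
so $a_{n+m}\le a_n+a_m$. Fekete's subadditive lemma gives $a_n/n\to \inf_n a_n/n\in[-\infty,\infty)$. Note $\Phi^n G=0$ for some $n$ forces $\Phi^{n+k}G=0$ for all $k\ge 0$, so the value $-\infty$ is consistent with the stated range.

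Independence of the split-generator is obtained by sandwiching. Let $G,G'$ be two split-generators. By the remark above, the constants $C\coloneqq \delta_t(G,G')$ and $C'\coloneqq \delta_t(G',G)$ are finite. Applying (i) and (ii) twice yields
\begin{equation*}
\delta_t(G,\Phi^n G)\le \delta_t(G,G')\cdot \delta_t(G',\Phi^n G')\cdot \delta_t(\Phi^n G',\Phi^n G)\le C\,C'\,\delta_t(G',\Phi^n G'),
\end{equation*}
and symmetrically with $G$ and $G'$ exchanged. Taking logarithms, dividing by $n$, and letting $n\to\infty$, the additive constants $\log(CC')$ disappear and the two limits coincide, so $h_t(\Phi)$ is well-defined.

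For the final mixed statement, the same sandwich works: using (i), (ii) and the finiteness of $\delta_t(G,G')$ and $\delta_t(G',G)$,
\begin{equation*}
\delta_t(G,\Phi^n G)\le \delta_t(G,\Phi^n G')\cdot \delta_t(G',G)
\quad\text{and}\quad
\delta_t(G,\Phi^n G')\le \delta_t(G,\Phi^n G)\cdot \delta_t(G,G'),
\end{equation*}
so $\tfrac{1}{n}\log\delta_t(G,\Phi^n G')$ and $\tfrac{1}{n}\log\delta_t(G,\Phi^n G)$ differ by a term that vanishes in the limit and hence share the same limit $h_t(\Phi)$. The only real obstacle is the bookkeeping for the degenerate cases $\delta_t=0$ (when some iterate vanishes) and $\delta_t=\infty$ (ruled out precisely by the split-generator hypothesis), which must be handled with the convention $\log 0=-\infty$ so that Fekete's lemma applies uniformly in $[-\infty,\infty)$.
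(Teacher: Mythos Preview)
Your argument is correct and is essentially the standard one: properties (i) and (ii) of $\delta_t$ are exactly \cite[Proposition~2.3]{DHKK}, and the passage via Fekete's lemma and the sandwich estimates is precisely how \cite[Lemma~2.6]{DHKK} and \cite[Lemma~2.6]{Kik1} proceed. Note that the paper itself does not supply a proof of this lemma; it simply cites those references, so there is nothing further to compare against.
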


Recall that a triangulated category is called {\em saturated} if it admits a smooth and proper dg enhancement.
For example, the bounded derived category of a smooth projective variety is saturated.

For two objects $E,F$ of a saturated triangulated category $\mathcal{D}$, define
\begin{equation*}
\epsilon_t(E,F) \coloneqq \sum_{k \in \mathbb{Z}} \dim \mathrm{Hom}_\mathcal{D}(E,F[k]) e^{-kt}.
\end{equation*}
For endofunctors of a saturated triangulated category, we can use $\epsilon_t$ instead of $\delta_t$ to compute the categorical entropy.

\begin{prop}[{\cite[Theorem 2.7]{DHKK}}]\label{sat-ent}
Let $\mathcal{D}$ be a saturated triangulated category and $\Phi : \mathcal{D} \to \mathcal{D}$ be an endofunctor.
Then
\begin{equation*}
h_t(\Phi) = \lim_{n \to \infty} \frac{\log\epsilon_t(G,\Phi^n G')}{n}
\end{equation*}
for any split-generators $G,G'$.
\end{prop}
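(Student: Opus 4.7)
The plan is to sandwich $\delta_t(G,F)$ and $\epsilon_t(G,F)$ between each other by multiplicative constants depending only on the split-generator $G$, and then specialise to $F=\Phi^nG'$. After taking logarithms and dividing by $n$, the constants become additive $O(1/n)$ terms that vanish in the limit, reducing the claim to the known definition of $h_t(\Phi)$ through $\delta_t$ together with the preceding lemma that any pair of split-generators computes the same limit.

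The easy half is the inequality $\epsilon_t(E,F) \leq \delta_t(E,F) \cdot \epsilon_t(E,E)$, valid for arbitrary $E,F$ and not requiring saturation. I would fix a cone decomposition of $F \oplus F'$ into shifts $E[n_i]$ for $i=1,\dots,k$ and induct on its length: each exact triangle $A \to B \to C \to A[1]$ in the tower gives, via the long exact sequence of $\mathrm{Hom}(E,-)$, the inequality $\dim\mathrm{Hom}(E,B[j]) \leq \dim\mathrm{Hom}(E,A[j])+\dim\mathrm{Hom}(E,C[j])$ in each degree $j$, hence $\epsilon_t(E,B) \leq \epsilon_t(E,A)+\epsilon_t(E,C)$. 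Unrolling this along the decomposition gives $\epsilon_t(E,F) \leq \epsilon_t(E,F\oplus F') \leq \sum_i e^{n_i t}\epsilon_t(E,E)$, and taking the infimum over all decompositions yields the bound.

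The harder half is the reverse comparison: for a split-generator $G$ of the saturated category $\mathcal{D}$ one wants a constant $C(G)>0$, independent of $F$, such that $\delta_t(G,F) \leq C(G) \cdot \epsilon_t(G,F)$ for all $F$. Here is where saturation enters. I would pass to a smooth proper dg enhancement of $\mathcal{D}$ and use the quasi-equivalence $\mathcal{D} \simeq \mathrm{Perf}(A)$ with $A = \mathrm{RHom}(G,G)$ a smooth proper dg algebra; under this $F$ corresponds to the perfect $A$-module $M = \mathrm{RHom}(G,F)$ whose graded cohomology has finite total dimension $\sum_k \dim\mathrm{Hom}(G,F[k])$, concentrated in a bounded window controlled by the amplitude of $A$ and of $M$. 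Smoothness of $A$ guarantees that every such perfect module admits a finite twisted-complex resolution built out of shifts $A[n_i]$, and properness keeps the relevant Hom-spaces finite-dimensional. The combinatorial input then is that the number of cells used at shift $n$ in such a resolution can be bounded, up to a constant that depends only on $A$, by a fixed linear combination of the $\dim\mathrm{Hom}(G,F[n+j])$ for $j$ in a bounded window. Translating this resolution back into $\mathcal{D}$ produces a cone decomposition of some $F \oplus F'$ into shifts $G[n_i]$ with $\sum_i e^{n_it} \leq C(G) \cdot \epsilon_t(G,F)$, and passing to the infimum gives the inequality.

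Combining the two bounds and specialising to $F = \Phi^nG'$ gives $\epsilon_t(G,\Phi^nG')/\epsilon_t(G,G) \leq \delta_t(G,\Phi^nG') \leq C(G)\cdot \epsilon_t(G,\Phi^nG')$. Taking $\frac{1}{n}\log$ and sending $n\to\infty$ makes both corrections disappear, so $\lim_{n\to\infty}\frac{1}{n}\log\epsilon_t(G,\Phi^nG')$ exists and equals the limit defined via $\delta_t$, which is $h_t(\Phi)$. The main obstacle is clearly the saturated half: producing the twisted-complex resolution with the number of cells at each shift controlled uniformly in $F$ is the real content of the argument, and it is exactly where both the smoothness (for finite length) and properness (for finite-dimensionality of Hom) of the dg enhancement are used.
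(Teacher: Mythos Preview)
The paper does not give its own proof of this proposition: it is stated with a citation to \cite[Theorem~2.7]{DHKK} and then used as a black box, so there is no in-paper argument to compare against. Your proposal is essentially the DHKK argument itself, and the two halves you outline---subadditivity of $\epsilon_t$ along cone decompositions for the easy bound, and a uniform twisted-complex resolution over the smooth proper dg algebra $A=\mathrm{RHom}(G,G)$ for the hard bound---are exactly the ingredients used there. One small correction: the constant you call $C(G)$ in the bound $\delta_t(G,F)\le C(G)\,\epsilon_t(G,F)$ is not literally independent of $t$; what the resolution gives is a bound of the shape $\delta_t(G,F)\le C\bigl(\sum_{|j|\le N}e^{-jt}\bigr)\epsilon_t(G,F)$ with $C,N$ depending only on $A$, so the prefactor depends on $t$ but not on $F$ or $n$, which is all that is needed for the $\frac{1}{n}\log$ limit.
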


\begin{ex}
(1)
Let $\mathcal{D}$ be a saturated triangulated category and $n,m \in \mathbb{Z}$.
Suppose $\Phi : \mathcal{D} \to \mathcal{D}$ satisfies $\Phi^n \cong [m]$.
Then $h_t(\Phi) = \frac{m}{n} t$ \cite[Section 2.6.1]{DHKK}.

(2)
Let $X$ be a smooth projective variety and $\mathcal{L} \in \mathrm{Pic}(X)$.
Then $h_t(- \otimes \mathcal{L}) = 0$ \cite[Lemma 2.14]{DHKK}.

(3)
Let $\mathcal{D}$ be a saturated triangulated category and $E \in \mathcal{D}$ be a {\em $d$-spherical object} where $d \geq 1$.
Denote by $T_E : \mathcal{D} \to \mathcal{D}$ the {\em spherical twist} along $E$.
Then $h_t(T_E) = (1-d)t$ for $t \leq 0$
Furthermore, if $E^\perp \coloneqq \{ F \in \mathcal{D} \,|\, \mathrm{Hom}_\mathcal{D}(E,F[k]) = 0 \text{ for all } k \in \mathbb{Z} \} \neq 0$ then $h_t(T_E) = 0$ for $t \geq 0$ \cite[Theorem 3.1]{Ouc}.
\end{ex}

The categorical entropy measures the exponential growth rate of $\delta_t(G,\Phi^nG)$ with respect to $n$.
In particular, if $\delta_t(G,\Phi^nG)$ grows like a polynomial in $n$, the categorical entropy becomes zero.
Fan--Fu--Ouchi \cite{FFO} introduced the polynomial categorical entropy to measure the polynomial growth rate of $\delta_t(G,\Phi^nG)$ with respect to $n$.

\begin{dfn}[{\cite[Definition 2.4]{FFO}}]\label{pol-cat-ent}
Let $\mathcal{D}$ be a triangulated category and $G$ be a split-generator of $\mathcal{D}$.
For an endofunctor $\Phi : \mathcal{D} \to \mathcal{D}$, define $I^\Phi \coloneqq \{ t \in \mathbb{R} \,|\, h_t(\Phi) \neq -\infty \}$.
The {\em polynomial categorical entropy} of an endofunctor $\Phi : \mathcal{D} \to \mathcal{D}$ is the function $h_t^\mathrm{pol}(\Phi) : I^\Phi \to [-\infty,\infty)$ in $t$ defined by
\begin{equation*}
h_t^\mathrm{pol}(\Phi) \coloneqq \limsup_{n \to \infty} \frac{\log\delta_t(G,\Phi^nG) - nh_t(\Phi)}{\log n}.
\end{equation*}
We denote $h_\mathrm{cat}^\mathrm{pol}(\Phi) \coloneqq h_0^\mathrm{pol}(\Phi)$.
\end{dfn}

Note that $0 \in I_\Phi$ for any $\Phi$ since $h_0(\Phi) \in [0,\infty)$.
Thus we can always consider $h_0^\mathrm{pol}(\Phi)$.

\begin{lem}[{\cite[Lemma 2.6]{FFO}}]
The polynomial categorical entropy does not depend on the choice of a split-generator $G$.
Moreover
\begin{equation*}
h_t^\mathrm{pol}(\Phi) = \limsup_{n \to \infty} \frac{\log\delta_t(G,\Phi^nG') - nh_t(\Phi)}{\log n}.
\end{equation*}
for any split-generators $G,G'$.
\end{lem}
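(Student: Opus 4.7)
The plan is to adapt the proof of the analogous independence statement for the categorical entropy itself (the lemma stated below Definition \ref{cat-ent}). Two standard properties of $\delta_t$ will be used repeatedly: submultiplicativity $\delta_t(E,F)\le \delta_t(E,H)\,\delta_t(H,F)$, and the contraction inequality $\delta_t(\Phi E,\Phi F)\le \delta_t(E,F)$, which holds because $\Phi$ is exact and therefore sends a cone decomposition to one of the same exponential weight. Since each split-generator is a direct summand of an iterated extension of shifts of the other, the numbers $C_1:=\delta_t(G,G')$ and $C_2:=\delta_t(G',G)$ are finite and positive.

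For the independence statement, chaining the two properties yields
\begin{equation*}
\delta_t(G,\Phi^n G) \;\le\; \delta_t(G,G')\,\delta_t(G',\Phi^n G')\,\delta_t(\Phi^n G',\Phi^n G) \;\le\; C_1 C_2\,\delta_t(G',\Phi^n G'),
\end{equation*}
and symmetrically $\delta_t(G',\Phi^n G')\le C_1 C_2\,\delta_t(G,\Phi^n G)$. Taking logarithms, subtracting $n\,h_t(\Phi)$ (a finite real number since $t\in I^\Phi$), and dividing by $\log n$, the constant prefactor contributes $\log(C_1 C_2)/\log n\to 0$. Passing to $\limsup$ then shows that the two candidate values of $h_t^{\mathrm{pol}}(\Phi)$ agree, so the definition is independent of $G$.

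For the moreover statement, applying the same tools with different choices of intermediate object gives
\begin{equation*}
\delta_t(G,\Phi^n G) \;\le\; \delta_t(G,\Phi^n G')\,\delta_t(\Phi^n G',\Phi^n G) \;\le\; C_2\,\delta_t(G,\Phi^n G'),
\end{equation*}
\begin{equation*}
\delta_t(G,\Phi^n G') \;\le\; \delta_t(G,G')\,\delta_t(G',\Phi^n G') \;\le\; C_1^2 C_2\,\delta_t(G,\Phi^n G),
\end{equation*}
where the last step uses the symmetric bound derived above. Thus $\delta_t(G,\Phi^n G)$ and $\delta_t(G,\Phi^n G')$ again differ by $n$-independent multiplicative constants, and the same manipulation (subtract $n h_t(\Phi)$, divide by $\log n$, pass to $\limsup$) shows that the mixed limsup in the statement also computes $h_t^{\mathrm{pol}}(\Phi)$.

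No step looks like a real obstacle: the whole argument is a polynomial-scale analogue of the corresponding proof for $h_t(\Phi)$, with $\log n$ in place of $n$ still absorbing any $n$-independent multiplicative constant. The only mild delicacy is that $h_t^{\mathrm{pol}}$ is defined by a $\limsup$ rather than a genuine limit, but this is handled by the elementary observation that if $a_n\le b_n+c_n$ with $c_n\to 0$, then $\limsup a_n\le \limsup b_n$.
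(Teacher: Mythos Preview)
The paper does not give its own proof of this lemma: it is quoted from \cite[Lemma 2.6]{FFO} without argument. Your proposal is correct and is precisely the standard proof one would expect (and indeed the one in \cite{FFO}): sandwich $\delta_t(G,\Phi^nG)$ and $\delta_t(G',\Phi^nG')$ (and the mixed version) between $n$-independent multiples of each other using submultiplicativity of $\delta_t$ and the contraction inequality for exact functors, then observe that additive constants vanish after dividing by $\log n$.
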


The following is a polynomial analogue of Proposition \ref{sat-ent}.

\begin{prop}[{\cite[Lemma 2.7]{FFO}}]\label{sat-pol-ent}
Let $\mathcal{D}$ be a saturated triangulated category and $\Phi : \mathcal{D} \to \mathcal{D}$ be an endofunctor.
Then
\begin{equation*}
h_t^\mathrm{pol}(\Phi) = \limsup_{n \to \infty} \frac{\log\epsilon_t(G,\Phi^n G') - nh_t(\Phi)}{\log n}
\end{equation*}
for any split-generators $G,G'$.
\end{prop}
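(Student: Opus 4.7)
The plan is to mirror the proof of Proposition \ref{sat-ent} and show that $\log\delta_t(G,\Phi^nG')$ and $\log\epsilon_t(G,\Phi^nG')$ differ by a quantity bounded uniformly in $n$, so that after subtracting $nh_t(\Phi)$ and dividing by $\log n$ the difference vanishes in the limit. Combined with the previous lemma, which already allows the use of two different split-generators in the $\delta_t$-formula for $h_t^{\mathrm{pol}}(\Phi)$, this will immediately give the claimed equality.

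The first step is the easy direction $\epsilon_t(G,F)\leq \epsilon_t(G,G)\cdot \delta_t(G,F)$, uniformly in $F$. Given any cone decomposition of $F\oplus F'$ into shifts $G[n_1],\dots,G[n_k]$, iterated application of the long exact sequence obtained from $\mathrm{Hom}_{\mathcal{D}}(G,-)$ together with additivity of $\epsilon_t$ on direct sums yields
\begin{equation*}
\epsilon_t(G,F)\leq \epsilon_t(G,F\oplus F')\leq \sum_{i=1}^k \epsilon_t(G,G[n_i])=\epsilon_t(G,G)\sum_{i=1}^k e^{n_i t},
\end{equation*}
and taking the infimum over all such decompositions produces the bound; note that $\epsilon_t(G,G)<\infty$ because $\mathcal{D}$ is saturated.

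For the reverse direction, I would appeal to the saturation hypothesis through exactly the same device used by Dimitrov--Haiden--Katzarkov--Kontsevich in the proof of Proposition \ref{sat-ent}: a resolution of the diagonal bimodule in the smooth and proper dg enhancement of $\mathcal{D}$ expresses the identity functor as an iterated cone involving finitely many shifts of $G\otimes G^{\vee}$, so any object $F$ is reconstructed from the graded vector space $\bigoplus_k \mathrm{Hom}_{\mathcal{D}}(G,F[k])$ as a finite iterated cone of shifts of $G$ whose multiplicities are controlled by $\dim\mathrm{Hom}_{\mathcal{D}}(G,F[k])$. This gives a constant $C(G,t)>0$, independent of $F$, with $\delta_t(G,F)\leq C(G,t)\cdot \epsilon_t(G,F)$.

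Putting the two inequalities together, for every $n\geq 1$,
\begin{equation*}
-\log\epsilon_t(G,G)\leq \log\delta_t(G,\Phi^nG')-\log\epsilon_t(G,\Phi^nG')\leq \log C(G,t),
\end{equation*}
so the difference is $O(1)$ as $n\to\infty$. Hence
\begin{equation*}
\frac{\log\delta_t(G,\Phi^nG')-nh_t(\Phi)}{\log n}-\frac{\log\epsilon_t(G,\Phi^nG')-nh_t(\Phi)}{\log n}=O\!\left(\tfrac{1}{\log n}\right)\longrightarrow 0,
\end{equation*}
and taking $\limsup$ on both sides, together with the previous lemma applied to the $\delta_t$-side, yields the desired identity. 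The only substantive ingredient is the saturation-dependent bound $\delta_t\leq C(G,t)\cdot \epsilon_t$; since this is already established in the entropy case, no genuinely new obstacle arises and the polynomial statement reduces to the same constant-difference trick.
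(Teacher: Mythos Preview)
The paper does not supply its own proof of this proposition: it is quoted verbatim as \cite[Lemma 2.7]{FFO} and left without argument. Your proposal is correct and is exactly the expected one---the two-sided comparison $\epsilon_t(G,G)^{-1}\,\epsilon_t(G,F)\leq \delta_t(G,F)\leq C(G,t)\,\epsilon_t(G,F)$ with constants independent of $F$ is precisely what is established in \cite[Theorem 2.7]{DHKK} en route to Proposition \ref{sat-ent}, and once that is in hand the passage from $h_t$ to $h_t^{\mathrm{pol}}$ is, as you say, the trivial observation that an additive $O(1)$ discrepancy disappears after dividing by $\log n$.
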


\begin{ex}
(1)
Let $\mathcal{D}$ be a saturated triangulated category and $n,m \in \mathbb{Z}$.
Suppose $\Phi : \mathcal{D} \to \mathcal{D}$ satisfies $\Phi^n \cong [m]$.
Then $h_t^\mathrm{pol}(\Phi) = 0$ \cite[Remark 6.3]{FFO}.

(2)
Let $X$ be a smooth projective variety of dimension $d$ and $\mathcal{L} \in \mathrm{Pic}(X)$.
Then $h_t^\mathrm{pol}(- \otimes \mathcal{L})$ is constant in $t$ and $\nu(\mathcal{L}) \leq h_t^\mathrm{pol}(- \otimes \mathcal{L}) \leq d$ where $\nu(\mathcal{L}) \coloneqq \{ n \in \mathbb{Z}_{\geq 0} \,|\, c_1(\mathcal{L})^n \not\equiv 0 \}$ is the {\em numerical dimension} of $\mathcal{L}$ \cite[Proposition 6.4]{FFO}.

(3)
Let $\mathcal{D}$ be a saturated triangulated category and $E \in \mathcal{D}$ be a $d$-spherical object where $d \geq 2$.
Denote by $T_E : \mathcal{D} \to \mathcal{D}$ the spherical twist along $E$.
Then $h_t^\mathrm{pol}(T_E) = 0$ for $t < 0$.
Furthermore, if $E^\perp \neq 0$ then $h_t^\mathrm{pol}(T_E) = 0$ for $t > 0$ \cite[Proposition 6.13]{FFO}.
Note that the value at $t=0$ need not necessarily be zero \cite[Remark 6.16]{FFO}.
\end{ex}

\subsection{Stability conditions}\label{stab-sec}

Let $\mathcal{D}$ be a triangulated category.
The {\em Grothendieck group} $K(\mathcal{D})$ of $\mathcal{D}$ is defined by quotienting the free abelian group generated by the objects of $\mathcal{D}$ by the relations $E-F+G$ whenever there is an exact triangle $E \to F \to G \to E[1]$.
We denote an element of $K(\mathcal{D})$ by $[E]$ for some $E \in \mathcal{D}$.

\begin{dfn}[{\cite[Definition 5.1]{Bri1}}]\label{stab-cond}
Let $\Gamma$ be a lattice of finite rank and $v : K(\mathcal{D}) \to \Gamma$ be a surjective group homomorphism.
A {\em stability condition} $\sigma = (Z_\sigma,\mathcal{P}_\sigma)$ on a triangulated category $\mathcal{D}$ with respect to $(\Gamma,v)$ consists of a group homomorphism $Z_\sigma : \Gamma \to \mathbb{C}$ and a full additive subcategory $\mathcal{P}_\sigma(\phi) \subset \mathcal{D}$ for each $\phi \in \mathbb{R}$ satisfying the following conditions:
\begin{enumerate}
\item If $0 \neq E \in \mathcal{P}_\sigma(\phi)$, then $Z_\sigma(v([E])) = m_\sigma(v([E])) e^{i\pi\phi}$ for some $m_\sigma(v([E])) \in \mathbb{R}_{>0}$.
\item $\mathcal{P}_\sigma(\phi + 1) = \mathcal{P}_\sigma(\phi)[1]$ for every $\phi \in \mathbb{R}$.
\item If $\phi_1 > \phi_2$ and $E_i \in \mathcal{P}_\sigma(\phi_i)$, then $\mathrm{Hom}_\mathcal{D}(E_1,E_2) = 0$.
\item For every $0 \neq E \in \mathcal{D}$, there exist real numbers $\phi_1 > \cdots > \phi_k$ and $E_i \in \mathcal{P}_\sigma(\phi_i)$ which fit into an iterated exact triangle of the form
\begin{equation*}
\begin{tikzcd}[column sep=scriptsize]
{0} \ar[rr] && {*} \ar[rr] \ar[dl] && {*} \ar[dl] & \cdots & {*} \ar[rr] && {E} \ar[dl]\\
& {E_1} \ar[ul,dashed] && {E_2} \ar[ul,dashed] &&&& {E_k} \ar[ul,dashed] &
\end{tikzcd}.
\end{equation*}
\end{enumerate}

For an interval $I \subset \mathbb{R}$, let $\mathcal{P}_\sigma(I)$ be the smallest full extension closed subcategory of $\mathcal{D}$ containing $\mathcal{P}_\sigma(\phi)$ for all $\phi \in I$.
We always assume a stability condition satisfies the following conditions:
\begin{enumerate}
\item[(5)] ({\em Locally finite}) There exists a constant $\eta > 0$ such that the quasi-abelian category $\mathcal{P}_\sigma((\phi-\eta,\phi+\eta))$ is of finite length for each $\phi \in \mathbb{R}$.
\item[(6)] ({\em Support property}) There exists a constant $C > 0$ and a norm $\lVert - \rVert$ on $\Gamma \otimes_\mathbb{Z} \mathbb{R}$ such that
\begin{equation*}
\lVert v([E]) \rVert \leq C \lvert Z_\sigma(v([E])) \rvert
\end{equation*}
for any $E \in \mathcal{P}_\sigma(\phi)$ and $\phi \in \mathbb{R}$.
\end{enumerate}
We denote by $\mathrm{Stab}_\Gamma(\mathcal{D})$ the space of stability conditions (satisfying also the conditions (5) and (6)) on $\mathcal{D}$ with respect to $(\Gamma,v)$.
If $(\Gamma,v)=(K(\mathcal{D}),\mathrm{id})$ then it will be denoted by $\mathrm{Stab}(\mathcal{D})$.
\end{dfn}

\begin{rmk}
Suppose $\mathcal{D}$ is of {\em finite type}, i.e., the graded vector space $\mathrm{Hom}_\mathcal{D}^\bullet(E,F) \coloneqq \bigoplus_{k \in \mathbb{Z}} \mathrm{Hom}_\mathcal{D}(E,F[k])[-k]$ is finite dimensional for any $E,F \in \mathcal{D}$.
Then we can define the {\em Euler form} $\chi : K(\mathcal{D}) \times K(\mathcal{D}) \to \mathbb{Z}$ given by
\begin{equation*}
\chi([E],[F]) \coloneqq \sum_{k \in \mathbb{Z}} (-1)^k \dim\mathrm{Hom}_\mathcal{D}(E,F[k]).
\end{equation*}
The {\em numerical Grothendieck group} $\mathcal{N}(\mathcal{D})$ of $\mathcal{D}$ is defined as
\begin{equation*}
\mathcal{N}(\mathcal{D}) \coloneqq K(\mathcal{D})/\langle [E] \in K(\mathcal{D}) \,|\, \chi([E],-) = 0 \rangle.
\end{equation*}
By an abuse of notation, we denote an element of $\mathcal{N}(\mathcal{D})$ also by $[E]$ for some $E \in \mathcal{D}$.
A triangulated category is called {\em numerically finite} if it is of finite type and the rank of the numerical Grothendieck group is of finite rank.

A stability condition on a numerically finite triangulated category $\mathcal{D}$ with respect to $(\mathcal{N}(\mathcal{D}),\pi)$, where $\pi : K(\mathcal{D}) \to \mathcal{N}(\mathcal{D})$ is the projection, is called a {\em numerical stability condition}.
We denote by $\mathrm{Stab}_\mathcal{N}(\mathcal{D})$ the space of numerical stability conditions on $\mathcal{D}$.
\end{rmk}

From now on, we set $Z_\sigma(E) \coloneqq Z_\sigma(v([E]))$, $m_\sigma(E) \coloneqq m_\sigma(v([E]))$ and so on to simplify the notations.

The group homomorphism $Z_\sigma : \Gamma \to \mathbb{C}$ is called the {\em central charge}.
A non-zero object in $\mathcal{P}_\sigma(\phi)$ is called a {\em $\sigma$-semistable object} of phase $\phi$.
It is known that each $\mathcal{P}_\sigma(\phi)$ is an abelian category \cite[Lemma 5.2]{Bri1}.
We call a simple object of $\mathcal{P}_\sigma(\phi)$ a {\em $\sigma$-stable object} of phase $\phi$.
The {\em heart} of a stability condition $\sigma$ is defined to be $\mathcal{P}_\sigma(0,1]$.
It is the heart of a bounded t-structure determined by $\sigma$, in particular, an abelian category.
In fact, it is known that to give a stability condition $\sigma$ on $\mathcal{D}$ is equivalent to give a bounded t-structure on $\mathcal{D}$ and a {\em stability function} $Z_\sigma : \Gamma \to \mathbb{C}$ on its heart $\mathcal{H}_\sigma$ with a property anologous to Definition \ref{stab-cond} (4) \cite[Proposition 5.3]{Bri1}.
In view of this fact, we sometimes write a stability condition with heart $\mathcal{H}_\sigma$ by $(Z_\sigma,\mathcal{H}_\sigma)$.

The iterated exact triangle as in Definition \ref{stab-cond} (4) is determined uniquely and we call it the {\em Harder--Narasimhan filtration} of $E$ with semistable factors $E_1,\dots,E_k$.
The uniqueness of the Harder--Narasimhan filtration allows us to define $\phi_\sigma^+(E) \coloneqq \phi_1$, $\phi_\sigma^-(E) \coloneqq \phi_k$ and
\begin{equation*}
m_{\sigma,t}(E) \coloneqq \sum_{i=1}^k \lvert Z_\sigma(E_i) \rvert e^{\phi_i t}
\end{equation*}
using the Harder--Narasimhan filtration of $0 \neq E \in \mathcal{D}$.
If $E \cong 0$, we set $m_{\sigma,t}(E) \coloneqq 0$.
We call $m_{\sigma,t}(E)$ the {\em mass} of $E$ and denote $m_\sigma(E) \coloneqq m_{\sigma,0}(E)$.

Bridgeland \cite{Bri1} introduced a generalized metric on $\mathrm{Stab}_\Gamma(\mathcal{D})$ which is defined as
\begin{equation*}
d_B(\sigma,\tau) \coloneqq \sup_{0 \neq E \in \mathcal{D}} \left\{ \lvert \phi_\tau^+(E) - \phi_\sigma^+(E) \rvert,\lvert \phi_\tau^-(E) - \phi_\sigma^-(E) \rvert,\left\lvert\log\frac{m_\tau(E)}{m_\sigma(E)}\right\rvert \right\} \in [0,\infty]
\end{equation*}
for $\sigma,\tau \in \mathrm{Stab}_\Gamma(\mathcal{D})$.
We equip $\mathrm{Stab}_\Gamma(\mathcal{D})$ with the topology induced by this metric.

\begin{thm}[{\cite[Theorem 1.3]{Bri1}}]
The map $\mathrm{Stab}_\Gamma(\mathcal{D}) \to \mathrm{Hom}(\Gamma,\mathbb{C})$ given by $(Z,\mathcal{P}) \mapsto Z$ is a local homeomorphism where $\mathrm{Hom}_\mathbb{Z}(\Gamma,\mathbb{C}) \cong \mathbb{C}^{\mathrm{rk}\,\Gamma}$ is equipped with the natural linear topology.
\end{thm}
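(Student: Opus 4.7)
The plan is to prove the statement in three steps: continuity of the forgetful map $\mathcal{Z} : (Z,\mathcal{P}) \mapsto Z$, local injectivity, and local surjectivity, the last being the substantive point. Throughout I would fix a reference stability condition $\sigma = (Z,\mathcal{P}) \in \mathrm{Stab}_\Gamma(\mathcal{D})$ together with the support constant $C$ and norm $\lVert - \rVert$ on $\Gamma \otimes_\mathbb{Z} \mathbb{R}$ supplied by Definition \ref{stab-cond}(6), and work in a small $d_B$-ball around $\sigma$.

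Continuity of $\mathcal{Z}$ is immediate: the support property yields $\lvert Z(v) \rvert \geq C^{-1} \lVert v \rVert$ on classes of semistables, and combined with the definition of $d_B$ this shows that $Z_\tau$ is close to $Z_\sigma$ on any finite generating set of $\Gamma$ whenever $d_B(\sigma,\tau)$ is small. For local injectivity, if $\sigma' = (Z,\mathcal{P}')$ is close to $\sigma$ and shares the same central charge, then condition (1) of Definition \ref{stab-cond} pins down the phase of each semistable object modulo $2$; closeness of the phase functions $\phi^\pm$ then forces $\phi_{\sigma'}^\pm(E) = \phi_\sigma^\pm(E)$ for every $0 \neq E \in \mathcal{D}$, so the Harder--Narasimhan filtrations coincide and the slicings must agree.

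The main obstacle is local surjectivity. Given $Z'$ close to $Z$ in the norm $\lVert Z' - Z \rVert_\sigma \coloneqq \sup_{0 \neq v} \lvert Z'(v) - Z(v) \rvert / \lVert v \rVert$, with closeness depending only on $C$ and the local-finiteness constant $\eta$ of $\sigma$, I would build a stability condition $\sigma' = (Z',\mathcal{P}')$ close to $\sigma$ in three moves. First, the support property ensures that each $\sigma$-semistable $E$ of phase $\phi$ has $Z'(E)$ lying in a small angular sector around $Z(E)$, so its new phase lies in $(\phi - \epsilon, \phi + \epsilon)$ for a uniform $\epsilon < 1/2$. Second, local finiteness makes each quasi-abelian category $\mathcal{P}((\phi - \eta, \phi + \eta))$ of finite length, so inside such a narrow band one can re-sort its $\sigma$-semistables by the new phase into a refined filtration; defining $\mathcal{P}'(\psi)$ as the extension closure of those $\sigma$-semistables whose new phase equals $\psi$ then produces the candidate slicing. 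Third, I would glue these local sortings along the original HN filtration of a general $0 \neq E \in \mathcal{D}$ to produce its HN filtration with respect to $Z'$, and verify the support property for $\sigma'$ with a slightly worsened constant; a quantitative estimate on $\phi_{\sigma'}^\pm$ and $m_{\sigma'}$ in terms of $\phi_\sigma^\pm$ and $m_\sigma$ will then show that $d_B(\sigma,\sigma')$ is controlled by $\lVert Z' - Z \rVert_\sigma$.

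I expect the delicate point to be the gluing step: a general $0 \neq E \in \mathcal{D}$ has an HN filtration whose phases may be spread across all of $\mathbb{R}$, and one must combine this global filtration with the re-sorting inside each narrow phase band to obtain an HN filtration with respect to $Z'$. This is where the required smallness of $\lVert Z' - Z \rVert_\sigma$ in terms of $C$ and $\eta$ becomes necessary, and where the new support constant is ultimately extracted.
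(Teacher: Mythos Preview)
The paper does not give its own proof of this statement: it is quoted verbatim as \cite[Theorem 1.3]{Bri1} and used as a black box, so there is nothing in the paper to compare your argument against. Your outline is in the spirit of Bridgeland's original proof, and the three-step decomposition (continuity, local injectivity, local surjectivity) is the right scaffolding.

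That said, there is a genuine gap in your construction of the deformed slicing. You propose to set $\mathcal{P}'(\psi)$ equal to the extension closure of those $\sigma$-semistable objects whose $Z'$-phase equals $\psi$. This is not correct: the $\sigma'$-semistable objects are in general \emph{not} $\sigma$-semistable. For instance, take two $\sigma$-stable objects $A,B$ of nearby but distinct phases $\phi_A<\phi_B$ and a nonsplit extension $0\to B\to E\to A\to 0$, so that $E$ has a nontrivial $\sigma$-HN filtration. If the perturbation $Z'$ happens to give $A$ and $B$ the same phase, then $E$ can become $\sigma'$-semistable even though it was not $\sigma$-semistable; it will not be produced by your recipe unless you also close under extensions across different $\sigma$-phases, at which point you have lost control of what $\mathcal{P}'(\psi)$ is. Conversely, the order of phases can reverse under $Z'$, and then producing the new HN filtration of $E$ requires finding new subobjects, not just regrouping old HN factors.

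The fix, and what Bridgeland actually does, is to work inside each thin quasi-abelian category $\mathcal{P}((\phi-\eta,\phi+\eta))$ and show that $Z'$ restricts to a (skewed) stability function there with the HN property, using finite length. The objects of $\mathcal{P}'(\psi)$ are then the $Z'$-semistables of this quasi-abelian category, which is an intrinsic condition and not a relabelling of $\sigma$-semistables. Your ``gluing'' step is then the verification that these locally defined slicings are compatible across overlapping thin intervals and assemble to a slicing of $\mathcal{D}$. If you rewrite your second move in these terms, the rest of your plan (support property with worsened constant, control of $d_B(\sigma,\sigma')$) goes through.
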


Thus the space $\mathrm{Stab}_\Gamma(\mathcal{D})$ of stability conditions carries the structure of a finite dimensional complex manifold induced from the natural complex structure on $\mathrm{Hom}_\mathbb{Z}(\Gamma,\mathbb{C}) \cong \mathbb{C}^{\mathrm{rk}\,\Gamma}$.

Let $\mathrm{Aut}(\mathcal{D})$ be the group of autoequivalences of $\mathcal{D}$ and assume that there is a group homomorphism $[-]_\Gamma : \mathrm{Aut}(\mathcal{D}) \to \mathrm{Aut}_\mathbb{Z}(\Gamma)$ (or $G \to \mathrm{Aut}_\mathbb{Z}(\Gamma)$ for a subgroup $G$ of $\mathrm{Aut}(\mathcal{D})$) which makes the diagram
\begin{equation*}
\begin{tikzcd}
K(\mathcal{D}) \ar[r,"{[\Phi]}"] \ar[d,"v",swap] & K(\mathcal{D}) \ar[d,"v"]\\
\Gamma \ar[r,"{[\Phi]}_\Gamma",swap] & \Gamma
\end{tikzcd}
\end{equation*}
commute, where $[\Phi] : K(\mathcal{D}) \to K(\mathcal{D})$ denotes the induced group homomorphism.
The group $\mathrm{Aut}(\mathcal{D})$ acts on $\mathrm{Stab}_\Gamma(\mathcal{D})$ from the left.
For $\Phi \in \mathrm{Aut}(\mathcal{D})$, the action is given by
\begin{equation*}
\Phi \cdot (Z,\mathcal{P}) \coloneqq (Z',\mathcal{P}')
\end{equation*}
where $Z' \coloneqq Z \circ [\Phi]_\Gamma^{-1}$ and $\mathcal{P}'(\phi) \coloneqq \Phi(\mathcal{P}(\phi))$.

\begin{rmk}
Alternatively, we can consider the group $\mathrm{Aut}_\Gamma(\mathcal{D}) \coloneqq \{(\Phi,a) \in \mathrm{Aut}(\mathcal{D}) \times \mathrm{Aut}_\mathbb{Z}(\Gamma) \,|\, v \circ [\Phi] = a \circ v\}$ as in \cite[Section 6.3]{KP}.
Then all our statements can be generalized by replacing $[\Phi]_\Gamma$ by $a$.
However, for the sake of simplicity, we assume throughout the paper that every $\Phi \in \mathrm{Aut}(\mathcal{D})$ admits $[\Phi]_\Gamma \in \mathrm{Aut}_\mathbb{Z}(\Gamma)$ such that $(\Phi,[\Phi]_\Gamma) \in \mathrm{Aut}_\Gamma(\mathcal{D})$ and the map $\mathrm{Aut}(\mathcal{D}) \ni \Phi \mapsto [\Phi]_\Gamma \in \mathrm{Aut}_\mathbb{Z}(\Gamma)$ defines a group homomorphism.
\end{rmk}

Recall that the universal cover $\widetilde{GL}^+(2,\mathbb{R})$ of $GL^+(2,\mathbb{R})$ consists of elements $(M,f)$ where $M \in GL^+(2,\mathbb{R})$ and $f : \mathbb{R} \to \mathbb{R}$ is an increasing function satisfying $f(\phi+1)=f(\phi)+1$ and $Me^{i\pi\phi} \in \mathbb{R}_{>0}e^{i\pi f(\phi)}$.
The group $\widetilde{GL}^+(2,\mathbb{R})$ acts on $\mathrm{Stab}_\Gamma(\mathcal{D})$ from the right.
For $(M,f) \in \widetilde{GL}^+(2,\mathbb{R})$, the action is given by
\begin{equation*}
(Z,\mathcal{P}) \cdot (M,f) \coloneqq (Z'',\mathcal{P}'')
\end{equation*}
where $Z'' \coloneqq M^{-1} \cdot Z$ and $\mathcal{P}''(\phi) \coloneqq \mathcal{P}(f(\phi))$.

Moreover $\mathbb{C}$ also acts on $\mathrm{Stab}_\Gamma(\mathcal{D})$ from the right through the map
\begin{equation*}
\mathbb{C} \ni \alpha \mapsto \left( M_\alpha \coloneqq e^{-\pi\Im\alpha} \begin{pmatrix} \cos\pi\Re\alpha & -\sin\pi\Re\alpha\\ \sin\pi\Re\alpha & \cos\pi\Re\alpha \end{pmatrix}, f_\alpha : \phi \mapsto \phi+\Re\alpha \right) \in \widetilde{GL}^+(2,\mathbb{R})
\end{equation*}
where $\Re\alpha$ (resp. $\Im\alpha$) is the real (resp. imaginary) part of $\alpha$.
More explicitly, $\alpha \in \mathbb{C}$ acts as
\begin{equation*}
(Z,\mathcal{P}) \cdot \alpha \coloneqq (Z''',\mathcal{P}''')
\end{equation*}
where $Z''' \coloneqq e^{-i\pi\alpha} Z$ and $\mathcal{P}'''(\phi) \coloneqq \mathcal{P}(\phi+\Re\alpha)$.

In the rest of this paper, we will often fix a connected component of $\mathrm{Stab}_\Gamma(\mathcal{D})$ and denote it by $\mathrm{Stab}_\Gamma^\dagger(\mathcal{D})$.
The generalized metric $d_B$ becomes a genuine metric on $\mathrm{Stab}_\Gamma^\dagger(\mathcal{D})$, i.e., it takes  values on $[0,\infty)$.
Also note that the action of $\widetilde{GL}^+(2,\mathbb{R})$ preserves the connected component whereas that of $\mathrm{Aut}(\mathcal{D})$ need not.

\subsection{Mass growth}
The mass growth was introduced by Ikeda \cite{Ike} as a categorical analogue of the volume growth.

\begin{dfn}[{\cite[Definition 3.2]{Ike}}]\label{mass-grow}
Let $\sigma \in \mathrm{Stab}_\Gamma(\mathcal{D})$.
The {\em mass growth} with respect to an endofunctor $\Phi : \mathcal{D} \to \mathcal{D}$ is the function $h_{\sigma,t}(\Phi) : \mathbb{R} \to [-\infty,\infty)$ in $t$ defined by
\begin{equation*}
h_{\sigma,t}(\Phi) \coloneqq \sup_{0 \neq E \in \mathcal{D}} \left( \limsup_{n \to \infty} \frac{\log m_{\sigma,t}(\Phi^nE)}{n} \right).
\end{equation*}
We denote $h_\sigma(\Phi) \coloneqq h_{\sigma,0}(\Phi)$.
\end{dfn}

\begin{lem}[{\cite[Proposition 3.10]{Ike}}]\label{mass-grow-lem}
If $\sigma,\tau \in \mathrm{Stab}_\Gamma(\mathcal{D})$ are in the same connected component, then $h_{\sigma,t}(\Phi) = h_{\tau,t}(\Phi)$.
In particular, the mass growth only depends on the connected component in which $\sigma \in \mathrm{Stab}_\Gamma(\mathcal{D})$ lies.
\end{lem}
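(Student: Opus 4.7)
The plan is to deduce the lemma from a Lipschitz-type comparison between the mass functions at different stability conditions. Concretely, I would show that whenever $\sigma,\tau \in \mathrm{Stab}_\Gamma(\mathcal{D})$ lie in the same connected component (so that $d \coloneqq d_B(\sigma,\tau) < \infty$, since $d_B$ restricts to a genuine metric on each connected component), there is a uniform estimate
\begin{equation*}
e^{-d(1+|t|)} \, m_{\sigma,t}(E) \;\leq\; m_{\tau,t}(E) \;\leq\; e^{d(1+|t|)} \, m_{\sigma,t}(E)
\end{equation*}
for every $0 \neq E \in \mathcal{D}$. Once this is in hand, substituting $E = \Phi^n E_0$, passing to $\limsup_{n \to \infty} \tfrac{1}{n}\log(-)$ kills the multiplicative constant, and taking the supremum over $0 \neq E_0 \in \mathcal{D}$ yields $h_{\sigma,t}(\Phi) = h_{\tau,t}(\Phi)$. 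The ``in particular'' clause is then an immediate consequence.

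For the key estimate, I would first reduce to the $\sigma$-semistable case via the $\sigma$-HN filtration of $E$. If $E_1,\ldots,E_k$ are the $\sigma$-HN factors of $E$ with phases $\phi_1 > \cdots > \phi_k$, then each $E_i$ satisfies $\phi_\sigma^+(E_i) = \phi_\sigma^-(E_i) = \phi_i$, so the very definition of $d_B$ forces every $\tau$-HN phase of $E_i$ to lie in $[\phi_i - d,\phi_i + d]$ and $m_\tau(E_i) \leq e^d m_\sigma(E_i)$. Pulling the factor $e^{|t|d}$ out of the defining sum for $m_{\tau,t}(E_i)$ and combining with the mass bound gives $m_{\tau,t}(E_i) \leq e^{d(1+|t|)} \, m_{\sigma,t}(E_i)$, where $m_{\sigma,t}(E_i) = |Z_\sigma(E_i)| \, e^{\phi_i t}$. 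The second ingredient is subadditivity of the $\tau$-mass along an exact triangle, i.e.\ $m_{\tau,t}(B) \leq m_{\tau,t}(A) + m_{\tau,t}(C)$ for $A \to B \to C$; applying it iteratively along the $\sigma$-HN filtration of $E$ assembles the individual semistable estimates into the desired global bound $m_{\tau,t}(E) \leq e^{d(1+|t|)} m_{\sigma,t}(E)$. Swapping the roles of $\sigma$ and $\tau$ supplies the reverse inequality.

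The main technical obstacle is the subadditivity of mass along exact triangles: it is not completely obvious, since an exact triangle in a triangulated category is not a short exact sequence in an abelian category, and the $\tau$-HN factors of $B$ need not be built out of those of $A$ and $C$ in a transparent way. This is nevertheless a standard fact in Bridgeland's theory and is precisely what allows one to bootstrap from the semistable estimate to a bound valid for arbitrary $E$. Everything else -- the phase/mass shift bounds forced by the $d_B$-distance, and the routine limit argument converting a uniform comparison of masses into equality of growth rates -- is a direct unwinding of the definitions.
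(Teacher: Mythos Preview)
The paper does not supply its own proof of this lemma; it is quoted directly from \cite[Proposition~3.10]{Ike} without argument. Your proposal is correct and is essentially Ikeda's original proof: establish a uniform multiplicative comparison $e^{-d(1+|t|)} m_{\sigma,t}(E) \le m_{\tau,t}(E) \le e^{d(1+|t|)} m_{\sigma,t}(E)$ via the definition of $d_B$ on $\sigma$-semistable pieces together with subadditivity of $m_{\tau,t}$ along exact triangles, then note that such a comparison is invisible to the growth rate $\limsup_n \tfrac{1}{n}\log(-)$. The one point you flag as a ``technical obstacle'' --- subadditivity of mass along triangles --- is indeed the only nontrivial ingredient, and it is exactly what Ikeda proves separately (his Proposition~3.3) before using it here; so your proof is not just correct but matches the cited source in both structure and level of detail.
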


A stability condition $\sigma \in \mathrm{Stab}_\Gamma(\mathcal{D})$ is called an {\em algebraic stability condition} if its heart is a finite length abelian category with finitely many simple objects.

\begin{thm}[{\cite[Theorems 3.5 and 3.14]{Ike}}]\label{mass-grow-thm}
Let $\sigma \in \mathrm{Stab}_\Gamma^\dagger(\mathcal{D})$ and $\Phi : \mathcal{D} \to \mathcal{D}$ be an endofunctor.
\begin{enumerate}
\item For any split-generator $G \in \mathcal{D}$, we have
\begin{equation*}
h_{\sigma,t}(\Phi) = \limsup_{n \to \infty} \frac{\log m_{\sigma,t}(\Phi^nG)}{n}.
\end{equation*}
\item We have an inequality
\begin{equation*}
h_{\sigma,t}(\Phi) \leq h_t(\Phi)
\end{equation*}
where the equality holds if $\sigma$ lies in a connected component containing an algebraic stability condition.
\end{enumerate}
\end{thm}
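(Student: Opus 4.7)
The key tool is the elementary inequality $m_{\sigma,t}(F) \leq \delta_t(E,F) \cdot m_{\sigma,t}(E)$, immediate from three properties of the mass: subadditivity on exact triangles, the scaling $m_{\sigma,t}(E[n]) = e^{nt} m_{\sigma,t}(E)$, and additivity on direct sums; indeed, any iterated cone decomposition of $F \oplus F'$ by shifts $E[n_i]$ bounds $m_{\sigma,t}(F)$ by $m_{\sigma,t}(E) \sum_i e^{n_i t}$, and infimising yields the claim.

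For part (1), I would fix $0 \neq F_0 \in \mathcal{D}$ and use the split-generator property of $G$ to choose a cone decomposition of $F_0 \oplus F_0'$ by shifts of $G$; applying the exact functor $\Phi^n$ produces a cone decomposition of $\Phi^n F_0 \oplus \Phi^n F_0'$ by the \emph{same} shifts of $\Phi^n G$, hence $m_{\sigma,t}(\Phi^n F_0) \leq C(F_0) \cdot m_{\sigma,t}(\Phi^n G)$ with $C(F_0)$ independent of $n$. Taking $\limsup$ of $\log/n$ shows the supremum in Definition \ref{mass-grow} is attained on $G$. The inequality $h_{\sigma,t}(\Phi) \leq h_t(\Phi)$ in part (2) then follows immediately by applying the mass--complexity inequality to $E = G$, $F = \Phi^n G$, taking $\log/n$ and sending $n \to \infty$ (the factor $m_{\sigma,t}(G)$ disappears in the limit), combined with part (1).

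For the equality in (2), Lemma \ref{mass-grow-lem} lets me replace $\sigma$ by an algebraic $\tau$ in the same connected component. Let $S_1, \ldots, S_m$ be the simples of the heart $\mathcal{H}_\tau$; then $G := \bigoplus_j S_j$ is a split-generator of $\mathcal{D}$, because the HN filtration of any object produces semistable factors which are shifts of finite-length objects in $\mathcal{H}_\tau$, themselves iterated extensions of the $S_j$. The goal is a reverse bound $\delta_t(G,E) \leq C \cdot m_{\tau,t}(E)$ with $C = C(\tau,t)$ independent of $E$. I would construct an explicit cone decomposition of $E$ by concatenating its HN filtration (with factors $E_i \in \mathcal{P}_\tau(\phi_i)$) with the JH filtrations of the $E_i$ inside the length-finite abelian categories $\mathcal{P}_\tau(\phi_i)$, whose simple subfactors have the form $S_j[n]$ with $n = \phi_i - \phi(S_j) \in \mathbb{Z}$. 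Bounding the JH length of $E_i$ by $|Z_\tau(E_i)|/\min_j |Z_\tau(S_j)|$ (valid because all such factors have central charges on a single ray, so absolute values add) and collecting weights gives the estimate. Plugging $E = \Phi^n G$ and taking $\limsup$ of $\log/n$ yields $h_t(\Phi) \leq h_{\tau,t}(\Phi) = h_{\sigma,t}(\Phi)$.

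The main obstacle is precisely this last estimate. It relies essentially on two ingredients specific to the algebraic setting: the finiteness of the simples in $\mathcal{H}_\tau$, which makes $\min_j |Z_\tau(S_j)|$ a strictly positive uniform constant, and the fact that JH factors of a semistable object share a common phase, so that their central charges sum rather than cancel. Outside the algebraic case neither ingredient is automatically available, which is exactly why the reverse inequality can fail in general and why the equality statement is conditional.
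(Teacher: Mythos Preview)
The paper does not supply a proof of this theorem; it is stated as a quotation from \cite{Ike}. Your argument for part (1) and for the inequality in part (2) is correct and is the standard one.

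For the equality in part (2) there is a real gap. You assert that the simple objects of $\mathcal{P}_\tau(\phi_i)$ --- that is, the $\tau$-stable objects of phase $\phi_i$ --- are of the form $S_j[n]$ with $S_j$ a simple of the heart and $n=\phi_i-\phi(S_j)\in\mathbb{Z}$. This is false for a general algebraic $\tau$: already for the $A_2$ quiver with $\phi(S_1)=2/3$ and $\phi(S_2)=1/3$, the non-split extension $P$ of $S_1$ by $S_2$ is $\tau$-stable of phase strictly between $1/3$ and $2/3$, yet is not a shift of either simple. Hence your Jordan--H\"older filtration inside $\mathcal{P}_\tau(\phi_i)$ does not produce a cone decomposition by shifts of $G=\bigoplus_jS_j$, and the bound $\delta_t(G,E)\leq C\cdot m_{\tau,t}(E)$ does not follow as written.

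Two easy repairs are available. First, since Lemma \ref{mass-grow-lem} already lets you move within the connected component, you may further move $\tau$ along the open cell of stability conditions sharing the same heart to one where all $Z_\tau(S_j)$ lie on a single ray; then every object of the heart is semistable of that common phase, $\tau$-stable coincides with simple in $\mathcal{H}_\tau$, and your argument goes through verbatim. Second, keep $\tau$ arbitrary but take the Jordan--H\"older filtration of each $E_i[-n_i]$ in the heart $\mathcal{H}_\tau$ rather than in $\mathcal{P}_\tau(\phi_i)$: the factors are now genuinely among the $S_j$, and the length is bounded by $|Z_\tau(E_i)|/c$ once you replace $\min_j|Z_\tau(S_j)|$ by $c:=\min_j \mathrm{Im}\bigl(e^{-i\theta}Z_\tau(S_j)\bigr)$ for a small $\theta>0$ chosen so that all these imaginary parts are positive (possible precisely because there are only finitely many simples). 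Either route recovers the estimate you need.
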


A polynomial analogue of the mass growth was introduced by Fan--Fu--Ouchi \cite{FFO}.

\begin{dfn}[{\cite[Definition 3.3]{FFO}}]\label{pol-mass-grow}
Let $\sigma \in \mathrm{Stab}_\Gamma(\mathcal{D})$.
For an endofunctor $\Phi : \mathcal{D} \to \mathcal{D}$, define $I_\sigma^\Phi \coloneqq \{ t \in \mathbb{R} \,|\, h_{\sigma,t}(\Phi) \neq -\infty \}$.
The {\em polynomial mass growth} with respect to an endofunctor $\Phi : \mathcal{D} \to \mathcal{D}$ is the function $h_t^\mathrm{pol}(\Phi) : I_\sigma^\Phi \to [-\infty,\infty)$ in $t$ defined by
\begin{equation*}
h_{\sigma,t}(\Phi) \coloneqq \sup_{0 \neq E \in \mathcal{D}} \left( \limsup_{n \to \infty} \frac{\log m_{\sigma,t}(\Phi^nE) - nh_{\sigma,t}(\Phi)}{\log n} \right).
\end{equation*}
We denote $h_\sigma^\mathrm{pol}(\Phi) \coloneqq h_{\sigma,0}^\mathrm{pol}(\Phi)$.
\end{dfn}

Note that $0 \in I_\Phi$ for any $\Phi$ since $h_{\sigma,0}(\Phi) \in [0,\infty)$.
Thus we can always consider $h_{\sigma,0}^\mathrm{pol}(\Phi)$.

\begin{lem}[{\cite[Lemma 3.5]{FFO}}]\label{pol-mass-grow-lem}
If $\sigma,\tau \in \mathrm{Stab}_\Gamma(\mathcal{D})$ are in the same connected component, then $h_{\sigma,t}^\mathrm{pol}(\Phi) = h_{\tau,t}^\mathrm{pol}(\Phi)$.
In particular, the polynomial mass growth only depends on the connected component in which $\sigma \in \mathrm{Stab}_\Gamma(\mathcal{D})$ lies.
\end{lem}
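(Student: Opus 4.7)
The plan is to reduce the statement to Lemma \ref{mass-grow-lem} together with a uniform multiplicative comparison between the masses $m_{\sigma,t}$ and $m_{\tau,t}$. Concretely, I would first prove that for $\sigma,\tau$ in the same connected component of $\mathrm{Stab}_\Gamma(\mathcal{D})$ and for any $t \in \mathbb{R}$ there is a constant $C = C(\sigma,\tau,t) > 0$ such that
\begin{equation*}
C^{-1} m_{\sigma,t}(E) \leq m_{\tau,t}(E) \leq C \, m_{\sigma,t}(E)
\end{equation*}
for every $0 \neq E \in \mathcal{D}$. This is the polynomial analogue of the estimate that underlies Ikeda's proof of Lemma \ref{mass-grow-lem}, and it is what makes the polynomial statement inherit from the usual one.

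To prove the comparison, set $d := d_B(\sigma,\tau)$, which is finite on a single connected component, and take the HN filtration of $E$ at $\sigma$ with semistable factors $E_1,\dots,E_k$ of phases $\phi_1 > \cdots > \phi_k$. Each $E_i$ is $\sigma$-semistable, so $\phi_\sigma^\pm(E_i) = \phi_i$, and the definition of the Bridgeland metric gives $m_\tau(E_i) \leq e^d m_\sigma(E_i)$ together with $\phi_i - d \leq \phi_\tau^-(E_i) \leq \phi_\tau^+(E_i) \leq \phi_i + d$. Subadditivity $m_{\tau,t}(E) \leq \sum_i m_{\tau,t}(E_i)$ combined with the trivial estimate $m_{\tau,t}(E_i) \leq m_\tau(E_i) \cdot e^{t\phi_\tau^+(E_i)}$ for $t \geq 0$ and $m_{\tau,t}(E_i) \leq m_\tau(E_i) \cdot e^{t\phi_\tau^-(E_i)}$ for $t \leq 0$ then yields $m_{\tau,t}(E) \leq e^{d(1+|t|)} m_{\sigma,t}(E)$; the reverse inequality follows by symmetry. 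Applying the comparison to $\Phi^n E$ gives
\begin{equation*}
\log m_{\tau,t}(\Phi^n E) = \log m_{\sigma,t}(\Phi^n E) + O(1)
\end{equation*}
uniformly in $n$ and in $E$. Together with $h_{\sigma,t}(\Phi) = h_{\tau,t}(\Phi)$ from Lemma \ref{mass-grow-lem}, the numerators defining $h_{\sigma,t}^\mathrm{pol}(\Phi)$ and $h_{\tau,t}^\mathrm{pol}(\Phi)$ differ only by a bounded quantity; dividing by $\log n$, passing to $\limsup_{n\to\infty}$, and then taking the supremum over $0 \neq E$, the bounded error contributes nothing in the limit, and equality of polynomial mass growths follows.

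The main technical point is the multiplicative comparison itself, where the $e^{t\phi_i}$ weight in the definition of $m_{\sigma,t}$ must be controlled by the phase perturbation $|\phi_\tau^\pm(E_i) - \phi_i| \leq d$; the sign of $t$ has to be tracked carefully so that one uses the upper HN phase $\phi_\tau^+$ for $t \geq 0$ and the lower one $\phi_\tau^-$ for $t \leq 0$. Once this uniform bound is established, the passage to the polynomial statement is formal because a uniformly bounded additive error always disappears after dividing by $\log n$; in this sense the argument is the polynomial refinement of Ikeda's Proposition 3.10 from \cite{Ike}.
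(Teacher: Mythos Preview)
Your argument is correct and is exactly the expected one: the uniform multiplicative comparison $C^{-1}m_{\sigma,t}(E)\le m_{\tau,t}(E)\le C\,m_{\sigma,t}(E)$ with $C=e^{d_B(\sigma,\tau)(1+|t|)}$, obtained via subadditivity of $m_{\tau,t}$ along the $\sigma$-HN filtration and the phase/mass bounds from $d_B$, immediately gives $\log m_{\tau,t}(\Phi^nE)=\log m_{\sigma,t}(\Phi^nE)+O(1)$ uniformly in $n$ and $E$; combined with $h_{\sigma,t}(\Phi)=h_{\tau,t}(\Phi)$ from Lemma~\ref{mass-grow-lem}, the $O(1)$ discrepancy vanishes after dividing by $\log n$. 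The paper does not supply its own proof here and simply cites \cite[Lemma~3.5]{FFO}; your write-up is precisely the polynomial refinement of Ikeda's argument that \cite{FFO} carries out, so there is nothing to compare.
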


The following is a polynomial analogue of Theorem \ref{mass-grow-thm}.

\begin{thm}[{\cite[Lemmas 3.4, 3.6 and 3.7]{FFO}}]\label{pol-mass-grow-thm}
Let $\sigma \in \mathrm{Stab}_\Gamma^\dagger(\mathcal{D})$ and $\Phi : \mathcal{D} \to \mathcal{D}$ be an endofunctor.
\begin{enumerate}
\item For any split-generator $G \in \mathcal{D}$, we have
\begin{equation*}
h_{\sigma,t}^\mathrm{pol}(\Phi) = \limsup_{n \to \infty} \frac{\log m_{\sigma,t}(\Phi^nG) - nh_{\sigma,t}(\Phi)}{\log n}.
\end{equation*}
\item We have an inequality
\begin{equation*}
h_{\sigma,t}^\mathrm{pol}(\Phi) \leq h_t^\mathrm{pol}(\Phi)
\end{equation*}
where the equality holds if $\sigma$ lies in a connected component containing an algebraic stability condition.
\end{enumerate}
\end{thm}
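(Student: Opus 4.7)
The plan is to prove the theorem as a polynomial refinement of Theorem \ref{mass-grow-thm}, using two key technical ingredients: the subadditivity of $m_{\sigma,t}$ on exact triangles, and the identity $m_{\sigma,t}(E[k]) = e^{kt} m_{\sigma,t}(E)$.

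For part (1), I would fix a split-generator $G$ and an arbitrary $0 \neq E \in \mathcal{D}$. By split-generation, there is an iterated cone decomposition expressing $E \oplus E'$ in terms of finitely many shifts $G[n_1], \ldots, G[n_k]$. Applying $\Phi^n$ and using subadditivity yields
\begin{equation*}
m_{\sigma,t}(\Phi^n E) \leq m_{\sigma,t}(\Phi^n(E \oplus E')) \leq \Bigl(\sum_{i=1}^k e^{n_i t}\Bigr) m_{\sigma,t}(\Phi^n G).
\end{equation*}
The prefactor depends on $E$ and $t$ but not on $n$, so after taking logarithms, subtracting $n h_{\sigma,t}(\Phi)$, dividing by $\log n$, and taking the limsup, it is absorbed into $o(1)$. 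This gives, for every $E$,
\begin{equation*}
\limsup_{n \to \infty} \frac{\log m_{\sigma,t}(\Phi^n E) - n h_{\sigma,t}(\Phi)}{\log n} \leq \limsup_{n \to \infty} \frac{\log m_{\sigma,t}(\Phi^n G) - n h_{\sigma,t}(\Phi)}{\log n}.
\end{equation*}
Taking the sup over $E$ on the left and observing that $G$ itself is one admissible choice of $E$ on the right gives the claimed equality.

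For the inequality in part (2), the key estimate is $m_{\sigma,t}(F) \leq m_{\sigma,t}(G) \cdot \delta_t(G, F)$ for every $F$: any cone decomposition certifying a value $\sum_i e^{n_i t}$ for $\delta_t(G, F)$ provides, by subadditivity, $m_{\sigma,t}(F) \leq m_{\sigma,t}(F \oplus F') \leq m_{\sigma,t}(G) \sum_i e^{n_i t}$, and taking infimum over decompositions yields the claim. Applied to $F = \Phi^n G$, taking logarithms and rearranging gives $h_{\sigma,t}^\mathrm{pol}(\Phi) \leq h_t^\mathrm{pol}(\Phi)$ in the regime $h_{\sigma,t}(\Phi) = h_t(\Phi)$, while in the strict regime $h_{\sigma,t}(\Phi) < h_t(\Phi)$ the divergent linear discrepancy $n(h_t - h_{\sigma,t})/\log n$ renders the derived bound vacuous.

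The main obstacle is proving the equality in the algebraic case, which requires a matching lower bound on $\delta_t$. Here I would exploit the finiteness properties of an algebraic stability condition: the heart is of finite length with finitely many simples $S_1, \ldots, S_r$, so taking $G = S_1 \oplus \cdots \oplus S_r$ as a split-generator, every object of the heart admits a finite filtration by these simples, giving a cone decomposition into shifts of $G$. The support property, combined with the Harder--Narasimhan filtration and the finiteness of simples, should yield a reverse estimate of the form $\delta_t(G, \Phi^n G) \leq C(t) \cdot m_{\sigma,t}(\Phi^n G) \cdot \mathrm{poly}(n)$, where the polynomial factor encodes the number of HN factors and the filtration length. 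Since this polynomial factor contributes $O(1)$ after dividing by $\log n$, combining this reverse estimate with the already established inequality produces the desired equality of polynomial growth rates.
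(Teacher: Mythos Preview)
The paper does not prove this theorem; it is quoted from \cite{FFO} (their Lemmas 3.4, 3.6, 3.7), so there is no in-paper argument to compare against. Your outline for part (1) is correct and is exactly the standard adaptation of Ikeda's argument (Theorem \ref{mass-grow-thm}(1)) to the polynomial setting: the constant $\sum_i e^{n_i t}$ depends only on $E$ and $t$, not on $n$, so it is killed by the $\log n$ denominator.

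For part (2) there is a genuine gap that you yourself flag but do not close. Your estimate $m_{\sigma,t}(\Phi^n G) \leq m_{\sigma,t}(G)\,\delta_t(G,\Phi^n G)$ only yields
\[
\frac{\log m_{\sigma,t}(\Phi^n G) - n h_{\sigma,t}(\Phi)}{\log n} \;\leq\; \frac{\log\delta_t(G,\Phi^n G) - n h_t(\Phi)}{\log n} + \frac{n\bigl(h_t(\Phi)-h_{\sigma,t}(\Phi)\bigr)}{\log n} + o(1),
\]
and the middle term blows up when $h_{\sigma,t}(\Phi) < h_t(\Phi)$. Saying the bound is ``vacuous'' in that regime is not a proof of the inequality claimed in the theorem; you have established (2) only under the extra hypothesis $h_{\sigma,t}(\Phi)=h_t(\Phi)$. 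Since the paper is simply importing the statement from \cite{FFO}, this is where one would have to go back to their Lemma 3.6 to see whether an additional argument (or an implicit hypothesis) covers the strict case.

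For the equality under an algebraic stability condition your outline is in the right direction but slightly pessimistic. One does not need a $\mathrm{poly}(n)$ correction: passing via Lemma \ref{pol-mass-grow-lem} to an algebraic $\tau$ in the same component and taking $G=\bigoplus_j S_j$, every object $F$ admits a cone decomposition into shifted simples whose number is controlled by $m_{\tau,t}(F)$ itself (each simple has mass bounded below by a fixed positive constant, and the shifts are absorbed into the $e^{\phi_i t}$ weights). This gives a uniform bound $\delta_t(G,F)\leq C(t)\,m_{\tau,t}(F)$ for all $F$, exactly as in Ikeda's proof of Theorem \ref{mass-grow-thm}(2); no $n$-dependent factor enters, and the polynomial equality then follows immediately.
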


\subsection{Shifting number}

The shifting number was introduced by Fan--Filip \cite{FF} as a categorical analogue of the Poincar\'e translation number.
It also can be considered as a generalization of the Serre dimension introduced by Elagin--Lunts \cite{EL}.

In this section, we only consider saturated triangulated categories.
Note that, for an endofunctor $\Phi$ of a saturated triangulated category, its categorical entropy $h_t(\Phi)$ is a real-valued convex function in $t$ \cite[Theorem 2.1.6]{FF}.

\begin{dfn}[{\cite[Definition 2.1.8]{FF}}]\label{shift-num}
Let $\mathcal{D}$ be a saturated triangulated category.
The {\em upper shifting number} (resp. {\em lower shifting number}) of an endofunctor $\Phi : \mathcal{D} \to \mathcal{D}$ is the number $\overline{\nu}(\Phi) \in \mathbb{R}$ (resp. $\underline{\nu}(\Phi) \in \mathbb{R}$) defined by
\begin{equation*}
\overline{\nu}(\Phi) \coloneqq \lim_{t \to \infty} \frac{h_t(\Phi)}{t} \quad \left( \text{resp. } \underline{\nu}(\Phi) \coloneqq \lim_{t \to -\infty} \frac{h_t(\Phi)}{t} \right).
\end{equation*}
\end{dfn}

\begin{rmk}
Every saturated triangulated category has the Serre functor \cite[Corollary 3.5]{BK2}.
If $S$ denotes the Serre functor of a saturated triangulated category $\mathcal{D}$, its upper shifting number $\overline{\nu}(S)$ (resp. lower shifting number $\underline{\nu}(S)$) is called the {\em upper Serre dimension} (resp. {\em lower Serre dimension}) of $\mathcal{D}$ \cite[Definition 5.3]{EL}.
\end{rmk}

Now, for two objects $E,F$ of a saturated triangulated category $\mathcal{D}$, define
\begin{align*}
\epsilon^+(E,F) &\coloneqq \max\{ k \in \mathbb{Z} \,|\, \mathrm{Hom}_\mathcal{D}(E,F[-k]) \neq 0 \},\\
\epsilon^-(E,F) &\coloneqq \min\{ k \in \mathbb{Z} \,|\, \mathrm{Hom}_\mathcal{D}(E,F[-k]) \neq 0 \}.
\end{align*}

\begin{prop}[{\cite[Theorem 2.1.7]{FF}}]\label{shift-ent}
The upper (resp. lower) shifting number is well-defined in the sense that the limit exists.
Moreover
\begin{equation*}
\overline{\nu}(\Phi) = \lim_{n \to \infty} \frac{\epsilon^+(G,\Phi^nG')}{n} \quad \left( \text{resp. } \underline{\nu}(\Phi) = \lim_{n \to \infty} \frac{\epsilon^-(G,\Phi^nG')}{n} \right)
\end{equation*}
for any split-generators $G,G'$.
In addition, we have
\begin{gather*}
\overline{\nu}(\Phi) \cdot t \leq h_t(\Phi) \leq h_\mathrm{cat}(\Phi) + \overline{\nu}(\Phi) \cdot t \quad (t \geq 0),\\
\underline{\nu}(\Phi) \cdot t \leq h_t(\Phi) \leq h_\mathrm{cat}(\Phi) + \underline{\nu}(\Phi) \cdot t \quad (t \leq 0).
\end{gather*}
\end{prop}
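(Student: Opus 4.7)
The plan is to build a bridge between the categorical entropy and the shifting numbers through elementary estimates on $\epsilon_t(E, F) = \sum_k \dim \mathrm{Hom}_\mathcal{D}(E, F[k]) e^{-kt}$. Since this is a finite sum of nonnegative terms indexed by $k \in [-\epsilon^+(E, F), -\epsilon^-(E, F)]$, inspecting the extremal exponents gives, for $t \geq 0$,
\[
e^{\epsilon^+(E, F) \cdot t} \;\leq\; \epsilon_t(E, F) \;\leq\; \epsilon_0(E, F) \cdot e^{\epsilon^+(E, F) \cdot t},
\]
and symmetrically for $t \leq 0$ with $\epsilon^-(E, F)$ in place of $\epsilon^+(E, F)$. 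These are the bridge estimates.

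Specializing to $E = G$ and $F = \Phi^n G'$, dividing by $n$, and using Proposition \ref{sat-ent} so that $\log \epsilon_t(G, \Phi^n G')/n \to h_t(\Phi)$ and $\log \epsilon_0(G, \Phi^n G')/n \to h_\mathrm{cat}(\Phi)$, I would derive for each fixed $t > 0$ the two-sided asymptotic
\[
t \cdot \limsup_n \frac{\epsilon^+(G, \Phi^n G')}{n} \;\leq\; h_t(\Phi) \;\leq\; t \cdot \limsup_n \frac{\epsilon^+(G, \Phi^n G')}{n} + h_\mathrm{cat}(\Phi),
\]
the lower inequality from $\lim a_n \geq \limsup b_n$ when $a_n \geq b_n$, and the upper one from $\limsup(x_n + y_n) \leq \limsup x_n + \lim y_n$. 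Dividing by $t$ and letting $t \to \infty$ simultaneously proves that $\overline{\nu}(\Phi)$ is a finite real number (convexity of $h_t(\Phi)$, recalled in the text, already ensures existence in $[-\infty, +\infty]$, while the bound $\epsilon^+(G, \Phi^n G') = O(n)$, obtained by iterating a fixed cone decomposition of $\Phi G'$ over $G$, rules out $+\infty$) and gives the identification $\overline{\nu}(\Phi) = \limsup_n \epsilon^+(G, \Phi^n G')/n$. To upgrade the limsup to a genuine limit, I would rearrange the upper half of the sandwich as
\[
\frac{\epsilon^+(G, \Phi^n G')}{n} \;\geq\; \frac{\log\epsilon_t(G, \Phi^n G') - \log\epsilon_0(G, \Phi^n G')}{n\, t}
\]
and take $\liminf_n$; since both $n$-limits on the right are genuine limits, this yields $\liminf_n \epsilon^+(G, \Phi^n G')/n \geq (h_t(\Phi) - h_\mathrm{cat}(\Phi))/t$, and letting $t \to \infty$ forces $\liminf \geq \overline{\nu}(\Phi)$, hence equality of $\liminf$ and $\limsup$. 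The argument for $\underline{\nu}(\Phi)$ is entirely parallel using the $\epsilon^-$-sandwich with $t \leq 0$, with attention to the reversal of inequalities when dividing by a negative number.

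With $\overline{\nu}(\Phi) = \lim_n \epsilon^+(G, \Phi^n G')/n$ in hand, the linear two-sided bounds on $h_t(\Phi)$ fall out of the sandwich itself: replacing $\limsup$ by $\lim = \overline{\nu}(\Phi)$ in the displayed inequalities yields $\overline{\nu}(\Phi) \cdot t \leq h_t(\Phi) \leq h_\mathrm{cat}(\Phi) + \overline{\nu}(\Phi) \cdot t$ for $t \geq 0$, and symmetrically for $t \leq 0$. The one place I expect to take care is the $\limsup$ versus $\liminf$ bookkeeping in the $\underline{\nu}$ case, where the sign of $t$ swaps the two, and the verification of the mild finiteness fact $\epsilon^+(G, \Phi^n G') = O(n)$; everything else is a direct consequence of Proposition \ref{sat-ent} and the bridge estimates above.
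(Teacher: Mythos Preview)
Your approach is correct and is exactly the one the paper itself employs: note that the paper does not supply a proof of this proposition (it is quoted from \cite[Theorem 2.1.7]{FF}), but it does prove the polynomial analogue, Proposition~\ref{pol-shift-ent}, and that proof begins with the identical bridge estimate
\[
e^{\epsilon^+(G,\Phi^nG') \cdot t} \leq \epsilon_t(G,\Phi^nG') \leq \epsilon_0(G,\Phi^nG')\, e^{\epsilon^+(G,\Phi^nG') \cdot t} \quad (t \geq 0)
\]
and proceeds exactly as you outline. Your handling of the $\limsup$/$\liminf$ bookkeeping, including the upgrade of $\limsup_n \epsilon^+(G,\Phi^nG')/n$ to a genuine limit via the rearranged upper bound, is clean and correct. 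The one place that could use a line of justification is the $O(n)$ bound on $\epsilon^+(G,\Phi^nG')$: rather than ``iterating a fixed cone decomposition'', it is quicker to invoke the triangle-type inequality $\epsilon^+(G,\Phi^{n+m}G) \leq \epsilon^+(G,\Phi^nG) + \epsilon^+(G,\Phi^mG)$ (valid because $G$ is a split-generator and $\Phi$ preserves Hom-spaces), which gives subadditivity and hence the linear bound directly.
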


There is another lower bound for the categorical entropy.

\begin{prop}[{\cite[Theorem 6.14]{EL}}]\label{shift-ent-lower}
Let $\mathcal{D}$ be a saturated triangulated category and $\Phi : \mathcal{D} \to \mathcal{D}$ be an endofunctor.
Then we have
\begin{gather*}
h_t(\Phi) \geq h_\mathrm{cat}(\Phi) + \underline{\nu}(\Phi) \cdot t \quad (t \geq 0),\\
h_t(\Phi) \geq h_\mathrm{cat}(\Phi) + \overline{\nu}(\Phi) \cdot t \quad (t \leq 0).
\end{gather*}
\end{prop}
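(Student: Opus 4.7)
The plan is to exploit the explicit sum formula for $\epsilon_t$ and the fact that, by saturation, the categorical entropy can be computed via $\epsilon_t$ rather than $\delta_t$ (Proposition \ref{sat-ent}). Fix a split-generator $G \in \mathcal{D}$. Then
\begin{equation*}
\epsilon_t(G,\Phi^nG) = \sum_{j \in \mathbb{Z}} \dim\mathrm{Hom}_\mathcal{D}(G,\Phi^nG[j])\, e^{-jt},
\end{equation*}
and by definition the sum is supported on indices $j$ with $-\epsilon^+(G,\Phi^nG) \leq j \leq -\epsilon^-(G,\Phi^nG)$.

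First, consider $t \geq 0$. On the support of the above sum, one has $-jt \geq \epsilon^-(G,\Phi^nG)\cdot t$, so $e^{-jt} \geq e^{\epsilon^-(G,\Phi^nG)\,t}$ term by term. Factoring this uniform lower bound out yields
\begin{equation*}
\epsilon_t(G,\Phi^nG) \geq e^{\epsilon^-(G,\Phi^nG)\,t} \cdot \epsilon_0(G,\Phi^nG).
\end{equation*}
Taking logarithms, dividing by $n$, and letting $n \to \infty$, the first term on the right converges to $\underline{\nu}(\Phi)\cdot t$ by Proposition \ref{shift-ent}, and the second to $h_\mathrm{cat}(\Phi)$ by Proposition \ref{sat-ent}. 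This yields the desired inequality for $t \geq 0$.

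For $t \leq 0$ the roles of $\epsilon^+$ and $\epsilon^-$ swap: on the support of the sum, $-jt \geq \epsilon^+(G,\Phi^nG)\,t$ (since multiplication by the nonpositive $t$ reverses the inequality $-j \leq \epsilon^+(G,\Phi^nG)$), so $e^{-jt} \geq e^{\epsilon^+(G,\Phi^nG)\,t}$. Factoring this out and passing to the limit in exactly the same way, using $\lim_n \epsilon^+(G,\Phi^nG)/n = \overline{\nu}(\Phi)$ from Proposition \ref{shift-ent}, gives the second inequality. The argument is quite short; the only point requiring care is to keep track of which of $\epsilon^\pm$ provides the correct uniform bound in terms of the sign of $t$, and this is the main (and really only) obstacle to the proof.
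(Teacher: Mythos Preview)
Your proof is correct. The paper does not itself prove this proposition (it is cited from \cite[Theorem 6.14]{EL}), but your argument is precisely the one the paper uses for the analogous statements: the proof of Proposition~\ref{shift-stab-lower} rests on the mass-growth analogue $m_{\sigma,t}(\Phi^nG) \geq m_\sigma(\Phi^nG)\, e^{\phi_\sigma^-(\Phi^nG)\cdot t}$ of your key inequality, and the proof of Proposition~\ref{pol-shift-ent-lower} uses exactly your inequality $\epsilon_t(G,\Phi^nG') \geq \epsilon_0(G,\Phi^nG')\, e^{\epsilon^-(G,\Phi^nG')\cdot t}$.
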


The shifting number can be described in terms of stability conditions.

\begin{prop}[{\cite[Theorem 2.2.6]{FF}}]\label{shift-stab}
Let $\mathcal{D}$ be a saturated triangulated category and $\sigma \in \mathrm{Stab}_\Gamma^\dagger(\mathcal{D})$.
For any endofunctor $\Phi : \mathcal{D} \to \mathcal{D}$ and a split-generator $G$, we have
\begin{align*}
\overline{\nu}(\Phi) &= \lim_{t \to \infty} \frac{h_{\sigma,t}(\Phi)}{t} = \lim_{n \to \infty} \frac{\phi_\sigma^+(\Phi^nG)}{n},\\
\underline{\nu}(\Phi) &= \lim_{t \to -\infty} \frac{h_{\sigma,t}(\Phi)}{t} = \lim_{n \to \infty} \frac{\phi_\sigma^-(\Phi^nG)}{n}.
\end{align*}
In addition, we have
\begin{gather*}
\overline{\nu}(\Phi) \cdot t \leq h_{\sigma,t}(\Phi) \leq h_\sigma(\Phi) + \overline{\nu}(\Phi) \cdot t \quad (t \geq 0),\\
\underline{\nu}(\Phi) \cdot t \leq h_{\sigma,t}(\Phi) \leq h_\sigma(\Phi) + \underline{\nu}(\Phi) \cdot t \quad (t \leq 0).
\end{gather*}
\end{prop}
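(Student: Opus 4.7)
The backbone of the argument is the explicit formula for the mass in terms of the Harder--Narasimhan filtration. If $E$ has HN factors $E_1,\dots,E_k$ with phases $\phi_1>\cdots>\phi_k$, then $m_{\sigma,t}(E)=\sum_i |Z_\sigma(E_i)|e^{\phi_i t}$ immediately yields, for $t\geq 0$, the sandwich
\begin{equation*}
|Z_\sigma(E_1)|\,e^{\phi_\sigma^+(E)\,t}\;\leq\;m_{\sigma,t}(E)\;\leq\;m_\sigma(E)\,e^{\phi_\sigma^+(E)\,t},
\end{equation*}
and the analogous bounds with $E_k$ and $\phi_\sigma^-(E)$ for $t\leq 0$. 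The upper bound gives directly $h_{\sigma,t}(\Phi)\leq h_\sigma(\Phi)+t\cdot\limsup_n\phi_\sigma^+(\Phi^n G)/n$ after applying $\frac{1}{n}\log$ to both sides, taking $\limsup$ in $n$, and using Theorem \ref{mass-grow-thm}(1); the analogous computation handles the $t\leq 0$ case with $\phi_\sigma^-$.

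For the matching lower estimate I would use the support property: for each semistable factor $E_1^n$ of $\Phi^n G$, the class $v([E_1^n])$ is a \emph{nonzero} lattice vector, so $|Z_\sigma(E_1^n)|\geq C^{-1}\|v([E_1^n])\|$ is bounded below by a positive constant independent of $n$. Hence $\frac{1}{n}\log|Z_\sigma(E_1^n)|\to 0$, and the lower bound from the sandwich gives
\begin{equation*}
h_{\sigma,t}(\Phi)\;\geq\;t\cdot\limsup_n \frac{\phi_\sigma^+(\Phi^n G)}{n}\qquad(t\geq 0),
\end{equation*}
matching the upper bound up to the additive $h_\sigma(\Phi)$. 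Dividing by $t$ and passing $t\to\infty$ yields $\lim_{t\to\infty} h_{\sigma,t}(\Phi)/t=\lim_n \phi_\sigma^+(\Phi^n G)/n$, with the limit on the right existing by a Fekete-type subadditivity that I would extract from $\phi_\sigma^+(\Phi^{n+m}G)\leq\phi_\sigma^+(\Phi^n(\Phi^m G))$ combined with boundedness of the HN phase range of $G$.

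To identify this limit with $\overline{\nu}(\Phi)$, I would invoke Proposition \ref{shift-ent}, which expresses $\overline{\nu}(\Phi)=\lim_n \epsilon^+(G,\Phi^n G)/n$. The comparison comes from the Hom-vanishing axiom of stability conditions: if $\mathrm{Hom}_\mathcal{D}(G,\Phi^n G[-k])\neq 0$, then some HN factor of $G$ has phase $\leq$ some HN factor of $\Phi^n G[-k]$, forcing $\phi_\sigma^-(G)\leq \phi_\sigma^+(\Phi^n G)-k$, i.e.\ $\epsilon^+(G,\Phi^n G)\leq \phi_\sigma^+(\Phi^n G)-\phi_\sigma^-(G)$. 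Dividing by $n$ gives $\overline{\nu}(\Phi)\leq\lim_n\phi_\sigma^+(\Phi^n G)/n$, while the reverse inequality follows from Theorem \ref{mass-grow-thm}(2) combined with the already-known $\overline{\nu}(\Phi)=\lim_{t\to\infty}h_t(\Phi)/t$. The argument for $\underline{\nu}(\Phi)$ is symmetric, using $t\leq 0$ and $\epsilon^-$.

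The remaining two-sided inequalities are now immediate: the upper bound is exactly what the mass sandwich produces, and the lower bound $\overline{\nu}(\Phi)\cdot t\leq h_{\sigma,t}(\Phi)$ for $t\geq 0$ follows by taking $\liminf$ (rather than $\limsup$) in the sandwich argument and invoking the established equality of $\liminf$ and $\limsup$ for $\phi_\sigma^+(\Phi^n G)/n$. The main technical obstacle I anticipate is the existence of the limit $\lim_n \phi_\sigma^+(\Phi^n G)/n$ (as opposed to just $\limsup$): this requires a careful subadditivity argument for HN phases under iteration of $\Phi$, which is more delicate than the standard subadditivity used for complexities.
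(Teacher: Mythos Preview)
Your approach is correct and matches the method the paper uses for the polynomial analogue (Proposition~\ref{pol-shift-stab}); note that the paper does not prove Proposition~\ref{shift-stab} itself but cites it from \cite{FF}. The mass sandwich, the support-property lower bound on $|Z_\sigma(E_1^n)|$, the Hom-vanishing inequality $\epsilon^+(G,\Phi^nG)\leq\phi_\sigma^+(\Phi^nG)-\phi_\sigma^-(G)$, and the comparison $h_{\sigma,t}\leq h_t$ from Theorem~\ref{mass-grow-thm}(2) are exactly the ingredients used there.

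One remark on your anticipated obstacle: you do \emph{not} need a separate Fekete-type subadditivity argument for $\phi_\sigma^+(\Phi^nG)$. The existence of the limit falls out automatically from the two one-sided bounds you already have. From $\epsilon^+(G,\Phi^nG)\leq\phi_\sigma^+(\Phi^nG)-\phi_\sigma^-(G)$ and Proposition~\ref{shift-ent} you get $\liminf_n\phi_\sigma^+(\Phi^nG)/n\geq\overline{\nu}(\Phi)$; from your sandwich and $h_{\sigma,t}\leq h_t$ you get $\limsup_n\phi_\sigma^+(\Phi^nG)/n=\lim_{t\to\infty}h_{\sigma,t}(\Phi)/t\leq\lim_{t\to\infty}h_t(\Phi)/t=\overline{\nu}(\Phi)$. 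Hence $\liminf\geq\overline{\nu}(\Phi)\geq\limsup$, so the limit exists and equals $\overline{\nu}(\Phi)$ with no extra work.
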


This also shows that the asymptotic behavior of the mass growth $h_{\sigma,t}(\Phi)$ does not depend on the choice of $\sigma \in \mathrm{Stab}_\Gamma(\mathcal{D})$.

A lower bound analogous to Proposition \ref{shift-ent-lower} can be proved for the mass growth.

\begin{prop}\label{shift-stab-lower}
Let $\mathcal{D}$ be a saturated triangulated category and $\sigma \in \mathrm{Stab}_\Gamma^\dagger(\mathcal{D})$.
For any endofunctor $\Phi : \mathcal{D} \to \mathcal{D}$, we have
\begin{gather*}
h_{\sigma,t}(\Phi) \geq h_\sigma(\Phi) + \underline{\nu}(\Phi) \cdot t \quad (t \geq 0),\\
h_{\sigma,t}(\Phi) \geq h_\sigma(\Phi) + \overline{\nu}(\Phi) \cdot t \quad (t \leq 0).
\end{gather*}
\end{prop}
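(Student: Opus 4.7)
My plan is to imitate the proof of Proposition \ref{shift-ent-lower} but on the mass side, using only the definition of mass via the Harder--Narasimhan filtration together with Theorem \ref{mass-grow-thm}(1) and Proposition \ref{shift-stab}.

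\medskip

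The key elementary observation is this. Fix $0 \neq E \in \mathcal{D}$ and let $E_1,\dots,E_k$ be the semistable factors of its HN filtration, of strictly decreasing phases $\phi_1 > \cdots > \phi_k$, so that $\phi_1 = \phi_\sigma^+(E)$, $\phi_k = \phi_\sigma^-(E)$ and
\begin{equation*}
m_{\sigma,t}(E) = \sum_{i=1}^k \lvert Z_\sigma(E_i) \rvert e^{\phi_i t}.
\end{equation*}
For $t \geq 0$, since $\phi_i \geq \phi_\sigma^-(E)$ for every $i$, we have $e^{\phi_i t} \geq e^{\phi_\sigma^-(E) t}$, hence
\begin{equation*}
m_{\sigma,t}(E) \geq m_\sigma(E)\, e^{\phi_\sigma^-(E)\, t}.
\end{equation*}
Symmetrically, for $t \leq 0$ and $\phi_i \leq \phi_\sigma^+(E)$, we get $e^{\phi_i t} \geq e^{\phi_\sigma^+(E) t}$, and therefore $m_{\sigma,t}(E) \geq m_\sigma(E)\, e^{\phi_\sigma^+(E)\, t}$.

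\medskip

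Now I would fix a split-generator $G \in \mathcal{D}$. By Theorem \ref{mass-grow-thm}(1),
\begin{equation*}
h_{\sigma,t}(\Phi) = \limsup_{n \to \infty} \frac{\log m_{\sigma,t}(\Phi^n G)}{n}.
\end{equation*}
Applying the observation above to $E = \Phi^n G$ (which we may assume is non-zero for all $n$, else the claimed inequality is trivial as both sides become $-\infty$), we get for $t \geq 0$
\begin{equation*}
\frac{\log m_{\sigma,t}(\Phi^n G)}{n} \geq \frac{\log m_\sigma(\Phi^n G)}{n} + \frac{\phi_\sigma^-(\Phi^n G)}{n} \cdot t.
\end{equation*}
By Proposition \ref{shift-stab}, the sequence $\phi_\sigma^-(\Phi^n G)/n$ has a genuine limit equal to $\underline{\nu}(\Phi)$. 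Since adding a convergent sequence does not affect the $\limsup$, taking $\limsup_{n \to \infty}$ on both sides yields
\begin{equation*}
h_{\sigma,t}(\Phi) \geq h_\sigma(\Phi) + \underline{\nu}(\Phi) \cdot t \qquad (t \geq 0),
\end{equation*}
which is the first inequality. The second inequality, for $t \leq 0$, follows by the same argument with $\phi_\sigma^+$ in place of $\phi_\sigma^-$ and the limit $\overline{\nu}(\Phi) = \lim_{n \to \infty} \phi_\sigma^+(\Phi^n G)/n$ from Proposition \ref{shift-stab}.

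\medskip

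I do not expect any real obstacle here: the proof is essentially a one-line pointwise estimate on the HN filtration combined with the already-established asymptotic formulas for $h_\sigma$ and $\overline{\nu},\underline{\nu}$. The only subtlety worth spelling out is the $\limsup$/$\lim$ bookkeeping, which goes through cleanly precisely because Proposition \ref{shift-stab} provides \emph{limits} (not merely $\limsup$s) for $\phi_\sigma^\pm(\Phi^n G)/n$.
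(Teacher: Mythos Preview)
Your proof is correct and follows essentially the same approach as the paper: the paper's proof simply records the inequality $m_{\sigma,t}(\Phi^nG) \geq m_\sigma(\Phi^nG)\, e^{\phi_\sigma^-(\Phi^nG) \cdot t}$ for $t \geq 0$ and then invokes Theorem \ref{mass-grow-thm}(1) and Proposition \ref{shift-stab}, exactly as you do. Your version just spells out the HN computation and the $\limsup$/$\lim$ bookkeeping in more detail.
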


\begin{rmk}
While we were writing the paper, Woolf \cite{Woo2} uploaded a paper containing a result similar to Proposition \ref{shift-stab-lower} (see \cite[Proposition 3.2]{Woo2}).
\end{rmk}

\begin{proof}
Fix a split-generator $G$.
For any $t \geq 0$, we have
\begin{equation*}
m_{\sigma,t}(\Phi^nG) \geq m_\sigma(\Phi^nG) e^{\phi_\sigma^-(\Phi^nG) \cdot t}.
\end{equation*}
Then, applying Theorem \ref{mass-grow-thm} (1) and Proposition \ref{shift-stab}, we obtain the desired lower bound.

A similar argument can be applied for $t \leq 0$.
\end{proof}

Let us introduce a polynomial analogue of the shifting number for an endofunctor with vanishing categorical entropy.

\begin{dfn}\label{pol-shift-num}
Let $\mathcal{D}$ be a saturated triangulated category and $\Phi : \mathcal{D} \to \mathcal{D}$ be an endofunctor with $h_\mathrm{cat}(\Phi) = 0$.
The {\em upper polynomial shifting number} (resp. {\em lower polynomial shifting number}) of $\Phi$ is the number $\overline{\nu}^\mathrm{pol}(\Phi) \in [-\infty,\infty)$ (resp. $\underline{\nu}^\mathrm{pol}(\Phi) \in [-\infty,\infty)$) defined by
\begin{equation*}
\overline{\nu}^\mathrm{pol}(\Phi) \coloneqq \lim_{t \to \infty} \frac{h_t^\mathrm{pol}(\Phi)}{t} \quad \left( \text{resp. } \underline{\nu}^\mathrm{pol}(\Phi) \coloneqq \lim_{t \to -\infty} \frac{h_t^\mathrm{pol}(\Phi)}{t} \right).
\end{equation*}
\end{dfn}

The following is a polynomial analogue of Proposition \ref{shift-ent}.

\begin{prop}\label{pol-shift-ent}
The upper (resp. lower) dimension is well-defined in the sense that the limit exists.
Moreover
\begin{gather*}
\overline{\nu}^\mathrm{pol}(\Phi) = \limsup_{n \to \infty} \frac{\epsilon^+(G,\Phi^nG') - n\overline{\nu}(\Phi)}{\log n}\\
\left( \text{resp. } \underline{\nu}^\mathrm{pol}(\Phi) = \liminf_{n \to \infty} \frac{\epsilon^-(G,\Phi^nG') - n\underline{\nu}(\Phi)}{\log n} \right)
\end{gather*}
for any split-generators $G,G'$.
In addition, we have
\begin{gather*}
\overline{\nu}^\mathrm{pol}(\Phi) \cdot t \leq h_t^\mathrm{pol}(\Phi) \leq h_\mathrm{cat}^\mathrm{pol}(\Phi) + \overline{\nu}^\mathrm{pol}(\Phi) \cdot t \quad (t \geq 0),\\
\underline{\nu}^\mathrm{pol}(\Phi) \cdot t \leq h_t^\mathrm{pol}(\Phi) \leq h_\mathrm{cat}^\mathrm{pol}(\Phi) + \underline{\nu}^\mathrm{pol}(\Phi) \cdot t \quad (t \leq 0).
\end{gather*}
\end{prop}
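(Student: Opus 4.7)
The plan is to mimic the proof of Proposition~\ref{shift-ent}, but with polynomial estimates in place of exponential ones. Working in the saturated setting, Proposition~\ref{sat-pol-ent} lets me compute $h_t^{\mathrm{pol}}(\Phi)$ via $\epsilon_t$, and the assumption $h_{\mathrm{cat}}(\Phi)=0$ combined with the bounds in Proposition~\ref{shift-ent} forces $h_t(\Phi) = \overline{\nu}(\Phi)\cdot t$ for $t\geq 0$ and $h_t(\Phi) = \underline{\nu}(\Phi)\cdot t$ for $t\leq 0$. Hence the normalization $nh_t(\Phi)$ appearing in the definition of $h_t^{\mathrm{pol}}(\Phi)$ becomes linear in $t$ on each half-line, and the difference $\log\epsilon_t(G,\Phi^n G') - nh_t(\Phi)$ can be controlled cleanly.

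The main technical input is the elementary two-sided estimate, for $t\geq 0$,
\begin{equation*}
e^{\epsilon^+(G,\Phi^n G')\cdot t} \leq \epsilon_t(G,\Phi^n G') \leq \epsilon_0(G,\Phi^n G') \cdot e^{\epsilon^+(G,\Phi^n G')\cdot t},
\end{equation*}
obtained from $\epsilon_t(E,F)=\sum_k \dim\mathrm{Hom}_\mathcal{D}(E,F[k]) e^{-kt}$ by keeping only the term $k=-\epsilon^+$ on the left and bounding each factor $e^{-kt}$ by $e^{\epsilon^+ t}$ on the right. Taking logarithms, subtracting $nh_t(\Phi)=n\overline{\nu}(\Phi)t$, dividing by $\log n$, and taking $\limsup_n$, the left inequality yields
\begin{equation*}
h_t^{\mathrm{pol}}(\Phi) \geq t \cdot \limsup_{n\to\infty} \frac{\epsilon^+(G,\Phi^n G') - n\overline{\nu}(\Phi)}{\log n},
\end{equation*}
while the right one, together with the subadditivity of $\limsup$, gives the companion upper bound carrying an extra summand $h_{\mathrm{cat}}^{\mathrm{pol}}(\Phi) = \limsup_n \log\epsilon_0(G,\Phi^n G')/\log n$. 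Let $\nu_+$ denote the displayed $\limsup$. Dividing both bounds by $t$ and sending $t\to\infty$ squeezes $\lim_{t\to\infty} h_t^{\mathrm{pol}}(\Phi)/t$ between $\nu_+$ and $\nu_+$, which simultaneously proves existence of the limit, its independence from the choice of $G, G'$ (since $h_t^{\mathrm{pol}}(\Phi)$ is, by Proposition~\ref{sat-pol-ent}), its identification with $\nu_+$, and the two-sided inequalities in the conclusion.

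The statements for $\underline{\nu}^{\mathrm{pol}}(\Phi)$ are proved symmetrically: for $t\leq 0$ one uses the parallel sandwich $e^{\epsilon^-(G,\Phi^n G')\cdot t}\leq \epsilon_t(G,\Phi^n G') \leq \epsilon_0(G,\Phi^n G') \cdot e^{\epsilon^-(G,\Phi^n G')\cdot t}$, and multiplying by $t\leq 0$ after dividing by $\log n$ converts $\limsup_n$ into $\liminf_n$, landing on $\underline{\nu}^{\mathrm{pol}}(\Phi) = \liminf_n (\epsilon^-(G,\Phi^n G') - n\underline{\nu}(\Phi))/\log n$. The only real point of care is sign-tracking when dividing by negative quantities, together with the observation that $\limsup(a_n+b_n)\leq\limsup a_n + \limsup b_n$ is an inequality and not an equality; this is exactly why the upper half of the sandwich carries the extra $h_{\mathrm{cat}}^{\mathrm{pol}}(\Phi)$ term rather than collapsing to equality, in direct parallel with Proposition~\ref{shift-ent}.
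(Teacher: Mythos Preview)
Your proposal is correct and follows essentially the same approach as the paper's proof: the same two-sided estimate on $\epsilon_t$, the same use of $h_{\mathrm{cat}}(\Phi)=0$ together with Proposition~\ref{shift-ent} to linearize $h_t(\Phi)$, and the same squeeze after dividing by $t$. You are in fact slightly more careful than the paper in noting that the upper bound uses subadditivity of $\limsup$ (the paper writes this step as an equality, which is harmless but imprecise).
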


\begin{proof}
For $t \geq 0$, we have
\begin{equation*}
e^{\epsilon^+(G,\Phi^nG') \cdot t} \leq \epsilon_t(G,\Phi^nG') \leq \epsilon_0(G,\Phi^nG') e^{\epsilon^+(G,\Phi^nG') \cdot t}
\end{equation*}
and therefore
\begin{align*}
\left( \limsup_{n \to \infty} \frac{\epsilon^+(G,\Phi^nG') - n\overline{\nu}(\Phi)}{\log n} \right) t
&\leq \limsup_{n \to \infty} \frac{\log \epsilon_t(G,\Phi^nG') - n\overline{\nu}(\Phi) \cdot t}{\log n}\\
&\leq \limsup_{n \to \infty} \left( \frac{\log \epsilon_0(G,\Phi^nG')}{\log n} + \frac{\epsilon^+(G,\Phi^nG')  - n\overline{\nu}(\Phi)}{\log n} \cdot t \right)\\
&= h_\mathrm{cat}^\mathrm{pol}(\Phi) + \left( \limsup_{n \to \infty} \frac{\epsilon^+(G,\Phi^nG') - n\overline{\nu}(\Phi)}{\log n} \right) t.
\end{align*}
Since $h_\mathrm{cat}(\Phi) = 0$ by assumption, we see that
\begin{equation*}
\limsup_{n \to \infty} \frac{\log \epsilon_t(G,\Phi^nG') - n\overline{\nu}(\Phi) \cdot t}{\log n} = \limsup_{n \to \infty} \frac{\log \epsilon_t(G,\Phi^nG') - nh_t(\Phi)}{\log n} = h_t^\mathrm{pol}(\Phi)
\end{equation*}
by Propositions \ref{sat-pol-ent} and \ref{shift-ent}.
Thus it follows that
\begin{equation*}
\overline{\nu}^\mathrm{pol}(\Phi) = \lim_{t \to \infty} \frac{h_t^\mathrm{pol}(\Phi)}{t} = \limsup_{n \to \infty} \frac{\epsilon^+(G,\Phi^nG') - n\overline{\nu}(\Phi)}{\log n}
\end{equation*}
and
\begin{equation*}
\overline{\nu}^\mathrm{pol}(\Phi) \cdot t \leq h_t^\mathrm{pol}(\Phi) \leq h_\mathrm{cat}^\mathrm{pol}(\Phi) + \overline{\nu}^\mathrm{pol}(\Phi) \cdot t
\end{equation*}
for every $t \geq 0$.

The case $t \leq 0$ can be similarly proved.
\end{proof}

The following is a polynomial analogue of Proposition \ref{shift-ent-lower}.

\begin{prop}\label{pol-shift-ent-lower}
Let $\mathcal{D}$ be a saturated triangulated category and $\Phi : \mathcal{D} \to \mathcal{D}$ be an endofunctor with $h_\mathrm{cat}(\Phi)=0$.
Then we have
\begin{gather*}
h_t^\mathrm{pol}(\Phi) \geq h_\mathrm{cat}^\mathrm{pol}(\Phi) + \underline{\nu}^\mathrm{pol}(\Phi) \cdot t \quad (t \geq 0),\\
h_t^\mathrm{pol}(\Phi) \geq h_\mathrm{cat}^\mathrm{pol}(\Phi) + \overline{\nu}^\mathrm{pol}(\Phi) \cdot t \quad (t \leq 0).
\end{gather*}
\end{prop}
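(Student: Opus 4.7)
The plan is to argue analogously to the proof of Proposition \ref{shift-stab-lower}, the linear (mass) version of this lower bound, adapted to the polynomial regime by using the polynomial-scale characterisations of $h_\mathrm{cat}^\mathrm{pol}$ and $\underline{\nu}^\mathrm{pol}$ provided by Propositions \ref{sat-pol-ent} and \ref{pol-shift-ent}. In spirit, the argument should parallel how the proof of Proposition \ref{pol-shift-ent} lifts Proposition \ref{shift-ent}, but based on a one-sided rather than a two-sided estimate on $\epsilon_t$.

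For $t\ge 0$, I would fix split-generators $G,G'$ and begin from the elementary inequality
\[
\epsilon_t(G,\Phi^n G')\;=\;\sum_{j\in\mathbb{Z}}\dim\mathrm{Hom}_\mathcal{D}(G,\Phi^n G'[-j])\,e^{jt}\;\ge\;\epsilon_0(G,\Phi^n G')\,e^{\epsilon^-(G,\Phi^n G')\,t},
\]
valid because every nonzero summand has $j\ge\epsilon^-(G,\Phi^n G')$ and thus $e^{jt}\ge e^{\epsilon^-\,t}$ for $t\ge 0$. Taking logarithms, subtracting $nh_t(\Phi)$, and using that the hypothesis $h_\mathrm{cat}(\Phi)=0$ combined with Propositions \ref{shift-ent} and \ref{shift-ent-lower} forces $h_t(\Phi)=\overline{\nu}(\Phi)\cdot t$ for $t\ge 0$, one rearranges to
\[
\frac{\log\epsilon_t(G,\Phi^n G')-nh_t(\Phi)}{\log n}\;\ge\;\frac{\log\epsilon_0(G,\Phi^n G')-nh_0(\Phi)}{\log n}+\frac{\epsilon^-(G,\Phi^n G')-n\underline{\nu}(\Phi)}{\log n}\cdot t-\frac{n(\overline{\nu}(\Phi)-\underline{\nu}(\Phi))}{\log n}\cdot t.
\]
Taking $\limsup_{n\to\infty}$, applying Proposition \ref{sat-pol-ent} to identify the left-hand side with $h_t^\mathrm{pol}(\Phi)$, and invoking Proposition \ref{pol-shift-ent} to identify the first two terms on the right with $h_\mathrm{cat}^\mathrm{pol}(\Phi)$ and $\underline{\nu}^\mathrm{pol}(\Phi)\cdot t$, would yield the desired bound. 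The case $t\le 0$ is symmetric, using the elementary inequality $\epsilon_t\ge\epsilon_0\,e^{\epsilon^+\,t}$ and the identity $h_t(\Phi)=\underline{\nu}(\Phi)\cdot t$ for $t\le 0$.

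The main obstacle is the correction term $-n(\overline{\nu}(\Phi)-\underline{\nu}(\Phi))\cdot t/\log n$, which a priori diverges to $-\infty$ in the nontrivial case $\underline{\nu}(\Phi)<\overline{\nu}(\Phi)$. Overcoming this requires either coordinating the subsequences of $n$ realising the $\limsup$ on the left with those realising $h_\mathrm{cat}^\mathrm{pol}(\Phi)$ through $\log\epsilon_0/\log n$ and $\underline{\nu}^\mathrm{pol}(\Phi)$ through $(\epsilon^--n\underline{\nu}(\Phi))/\log n$, or refining the elementary estimate on $\epsilon_t$ by summing only over the degrees $j$ whose contribution to $\epsilon_t$ dominates at the $\log n$ scale rather than using a single exponent, thereby effectively replacing $\overline{\nu}(\Phi)$ by a finer growth rate. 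This synchronisation step is what I expect to require the most care.
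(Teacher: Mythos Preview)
Your plan is exactly the paper's: start from $\epsilon_t(G,\Phi^nG')\ge \epsilon_0(G,\Phi^nG')\,e^{\epsilon^-(G,\Phi^nG')\,t}$ for $t\ge 0$, subtract $nh_t(\Phi)=n\overline{\nu}(\Phi)t$, divide by $\log n$, and pass to the limit. One simplification: you do not need to ``coordinate subsequences''. The elementary inequality $\limsup(a_n+b_n)\ge \limsup a_n+\liminf b_n$ already yields
\[
h_t^\mathrm{pol}(\Phi)\;\ge\; h_\mathrm{cat}^\mathrm{pol}(\Phi)+t\cdot\liminf_{n\to\infty}\frac{\epsilon^-(G,\Phi^nG')-n\overline{\nu}(\Phi)}{\log n},
\]
and this is precisely what the paper does (it passes from a $\limsup$ to a $\liminf$ in its displayed chain for this reason).

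The genuine issue you isolate---that the computation produces $\overline{\nu}(\Phi)$ in the subtracted term while $\underline{\nu}^\mathrm{pol}(\Phi)$ is defined via $\underline{\nu}(\Phi)$---is \emph{not} addressed by the paper either. The paper simply writes $n\underline{\nu}(\Phi)$ where the calculation gives $n\overline{\nu}(\Phi)$, so its first displayed inequality carries exactly the correction term $-n(\overline{\nu}(\Phi)-\underline{\nu}(\Phi))t/\log n$ you wrote down. In other words, the paper's proof and your proposal have the same gap when $\overline{\nu}(\Phi)\neq\underline{\nu}(\Phi)$. This is harmless for the paper's purposes: Proposition~\ref{pol-shift-ent-lower} is only invoked in Proposition~\ref{linearity}(3),(4), under the hypothesis of a compatible triple, where Proposition~\ref{shift-prop} guarantees $\overline{\nu}(\Phi)=\underline{\nu}(\Phi)$ and the obstruction vanishes. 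So your proposal is as complete as the paper's own argument; neither your suggested fixes nor any further refinement is needed for the applications, though the proposition as stated in full generality is not fully justified by either proof.
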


\begin{proof}
Fix split-generators $G,G'$.
For any $t \geq 0$, we have
\begin{equation*}
\epsilon_t(G,\Phi^nG') \geq \epsilon_0(G,\Phi^nG') e^{\epsilon^-(G,\Phi^nG') \cdot t}.
\end{equation*}
As in the proof of Proposition \ref{pol-shift-ent}, we get
\begin{align*}
h_t^\mathrm{pol}(\Phi)
&\geq h_\mathrm{cat}^\mathrm{pol}(\Phi) + \limsup_{n \to \infty} \left( \frac{\epsilon^-(G,\Phi^nG') - n\underline{\nu}(\Phi)}{\log n} \cdot t \right)\\
&\geq h_\mathrm{cat}^\mathrm{pol}(\Phi) + \liminf_{n \to \infty} \left( \frac{\epsilon^-(G,\Phi^nG') - n\underline{\nu}(\Phi)}{\log n} \cdot t \right)\\
&= h_\mathrm{cat}^\mathrm{pol}(\Phi) + \left( \liminf_{n \to \infty} \frac{\epsilon^-(G,\Phi^nG') - n\underline{\nu}(\Phi)}{\log n} \right) t\\
&= h_\mathrm{cat}^\mathrm{pol}(\Phi) + \underline{\nu}^\mathrm{pol}(\Phi) \cdot t.
\end{align*}

For $t \leq 0$, we similarly obtain
\begin{align*}
h_t^\mathrm{pol}(\Phi)
&\geq h_\mathrm{cat}^\mathrm{pol}(\Phi) + \limsup_{n \to \infty} \left( \frac{\epsilon^+(G,\Phi^nG') - n\overline{\nu}(\Phi)}{\log n} \cdot t \right)\\
&\geq h_\mathrm{cat}^\mathrm{pol}(\Phi) + \liminf_{n \to \infty} \left( \frac{\epsilon^+(G,\Phi^nG') - n\overline{\nu}(\Phi)}{\log n} \cdot t \right)\\
&= h_\mathrm{cat}^\mathrm{pol}(\Phi) + \left( \limsup_{n \to \infty} \frac{\epsilon^+(G,\Phi^nG') - n\overline{\nu}(\Phi)}{\log n} \right) t\\
&= h_\mathrm{cat}^\mathrm{pol}(\Phi) + \overline{\nu}^\mathrm{pol}(\Phi) \cdot t
\end{align*}
as desired.
\end{proof}

The following is a polynomial analogue of Proposition \ref{shift-stab}.

\begin{prop}\label{pol-shift-stab}
Let $\mathcal{D}$ be a saturated triangulated category and $\sigma \in \mathrm{Stab}_\Gamma^\dagger(\mathcal{D})$.
For any endofunctor $\Phi : \mathcal{D} \to \mathcal{D}$ with $h_\mathrm{cat}(\Phi)=0$ and a split-generator $G$, we have
\begin{align*}
\overline{\nu}^\mathrm{pol}(\Phi) &= \lim_{t \to \infty} \frac{h_{\sigma,t}^\mathrm{pol}(\Phi)}{t} = \limsup_{n \to \infty} \frac{\phi_\sigma^+(\Phi^nG) - n\overline{\nu}(\Phi)}{\log n},\\
\underline{\nu}^\mathrm{pol}(\Phi) &= \lim_{t \to -\infty} \frac{h_{\sigma,t}^\mathrm{pol}(\Phi)}{t} = \liminf_{n \to \infty} \frac{\phi_\sigma^-(\Phi^nG) - n\underline{\nu}(\Phi)}{\log n}.
\end{align*}
In addition, we have
\begin{gather*}
\overline{\nu}^\mathrm{pol}(\Phi) \cdot t \leq h_{\sigma,t}^\mathrm{pol}(\Phi) \leq h_\sigma^\mathrm{pol}(\Phi) + \overline{\nu}^\mathrm{pol}(\Phi) \cdot t \quad (t \geq 0),\\
\underline{\nu}^\mathrm{pol}(\Phi) \cdot t \leq h_{\sigma,t}^\mathrm{pol}(\Phi) \leq h_\sigma^\mathrm{pol}(\Phi) + \underline{\nu}^\mathrm{pol}(\Phi) \cdot t \quad (t \leq 0).
\end{gather*}
\end{prop}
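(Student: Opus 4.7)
The plan is to mimic the proof of Proposition \ref{pol-shift-ent} above, replacing the complexity function $\epsilon_t$ by the mass $m_{\sigma,t}$ and the combinatorial bounds used there by the corresponding mass--phase estimates coming from the Harder--Narasimhan filtration.

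The key technical input is the two-sided estimate
\begin{equation*}
c \cdot e^{\phi_\sigma^+(E) t} \leq m_{\sigma,t}(E) \leq m_\sigma(E) \cdot e^{\phi_\sigma^+(E) t} \qquad (t \geq 0)
\end{equation*}
for every $0 \neq E \in \mathcal{D}$, with a constant $c > 0$ depending only on $\sigma$. The upper bound is immediate from $m_{\sigma,t}(E) = \sum_i |Z_\sigma(E_i)| e^{\phi_i t}$ and $\phi_i \leq \phi_\sigma^+(E)$; the lower bound retains only the top HN factor $E_1$ of $E$, so that $m_{\sigma,t}(E) \geq |Z_\sigma(E_1)| e^{\phi_\sigma^+(E) t}$, with uniformity of $|Z_\sigma(E_1)|$ coming from the support property $\lVert v(E_1) \rVert \leq C |Z_\sigma(E_1)|$ together with $v(E_1) \in \Gamma \setminus \{0\}$ (since $E_1$ is $\sigma$-semistable) and the existence of a positive lower bound on the norm of nonzero lattice vectors. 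A symmetric estimate holds for $t \leq 0$, involving $\phi_\sigma^-(E)$ and the bottom HN factor.

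Since $h_\mathrm{cat}(\Phi) = 0$ forces $h_\sigma(\Phi) = 0$ by Theorem \ref{mass-grow-thm}, Proposition \ref{shift-stab} yields $h_{\sigma,t}(\Phi) = \overline{\nu}(\Phi) t$ for $t \geq 0$ and $h_{\sigma,t}(\Phi) = \underline{\nu}(\Phi) t$ for $t \leq 0$. Taking logarithms in the estimate above with $E = \Phi^n G$, subtracting $n \cdot h_{\sigma,t}(\Phi)$, dividing by $\log n$, and passing to $\limsup_{n \to \infty}$ via Theorem \ref{pol-mass-grow-thm}(1) produces, on setting
\begin{equation*}
A^+ := \limsup_n \frac{\phi_\sigma^+(\Phi^n G) - n\overline{\nu}(\Phi)}{\log n}, \qquad A^- := \liminf_n \frac{\phi_\sigma^-(\Phi^n G) - n\underline{\nu}(\Phi)}{\log n},
\end{equation*}
the sandwich $A^+ t \leq h_{\sigma,t}^\mathrm{pol}(\Phi) \leq h_\sigma^\mathrm{pol}(\Phi) + A^+ t$ for $t \geq 0$, and its mirror for $t \leq 0$ with $A^-$ in place of $A^+$; the appearance of $\liminf$ in $A^-$ comes from the identity $\limsup(t x_n) = t \liminf x_n$ valid for $t < 0$. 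Dividing the sandwich by $t$ and sending $|t| \to \infty$ then identifies $\lim_{t \to \pm \infty} h_{\sigma,t}^\mathrm{pol}(\Phi)/t = A^\pm$.

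The remaining step, which I expect to be the main obstacle, is to identify $A^+ = \overline{\nu}^\mathrm{pol}(\Phi)$ and $A^- = \underline{\nu}^\mathrm{pol}(\Phi)$, since the latter are defined through $h_t^\mathrm{pol}$ rather than $h_{\sigma,t}^\mathrm{pol}$. The inequalities $A^+ \leq \overline{\nu}^\mathrm{pol}(\Phi)$ and $A^- \geq \underline{\nu}^\mathrm{pol}(\Phi)$ are immediate from $h_{\sigma,t}^\mathrm{pol} \leq h_t^\mathrm{pol}$ (Theorem \ref{pol-mass-grow-thm}(2)) after dividing by $t$ and letting $|t| \to \infty$, taking the sign of $t$ into account. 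For the reverse inequalities I would combine Proposition \ref{pol-shift-ent}'s expression $\overline{\nu}^\mathrm{pol}(\Phi) = \limsup_n (\epsilon^+(G, \Phi^n G) - n\overline{\nu}(\Phi))/\log n$ with the phase-theoretic bound $\epsilon^+(G, F) \leq \phi_\sigma^+(F) - \phi_\sigma^-(G)$, a direct consequence of $\mathrm{Hom}$-vanishing between $\sigma$-semistables of strictly decreasing phases, and with its Serre-dual counterpart $\epsilon^-(G, F) \geq \phi_\sigma^-(F) - \phi_\sigma^+(S G)$, where $S$ denotes the Serre functor of $\mathcal{D}$ (available because $\mathcal{D}$ is saturated). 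Since $\phi_\sigma^-(G)$ and $\phi_\sigma^+(SG)$ are constants depending only on $G$, dividing by $\log n$ eliminates them and delivers the desired matching inequalities.
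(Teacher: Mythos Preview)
Your proposal is correct and follows essentially the same approach as the paper: the two-sided mass estimate via the support property, the resulting sandwich $A^+ t \leq h_{\sigma,t}^\mathrm{pol}(\Phi) \leq h_\sigma^\mathrm{pol}(\Phi) + A^+ t$, and the identification $A^\pm = \overline{\nu}^\mathrm{pol}(\Phi),\,\underline{\nu}^\mathrm{pol}(\Phi)$ by combining the phase bound $\epsilon^+(G,\Phi^nG) \leq \phi_\sigma^+(\Phi^nG) - \phi_\sigma^-(G)$ (and its Serre-dual counterpart for $\epsilon^-$) with the inequality $h_{\sigma,t}^\mathrm{pol} \leq h_t^\mathrm{pol}$ are exactly the steps the paper carries out.
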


\begin{proof}
Note that, by Definition \ref{stab-cond} (6), we can find a constant $C>0$ such that $\lvert Z_\sigma(E) \rvert > C$ for every $\sigma$-semistable object $E$.

For $t \geq 0$, we thus have
\begin{equation*}
Ce^{\phi_\sigma^+(\Phi^nG) \cdot t} \leq m_{\sigma,t}(\Phi^nG) \leq m_{\sigma,0}(\Phi^nG)e^{\phi_\sigma^+(\Phi^nG) \cdot t}
\end{equation*}
and therefore
\begin{align}\label{ineq-want-pos}
\left( \limsup_{n \to \infty} \frac{\phi_\sigma^+(\Phi^nG) - n\overline{\nu}(\Phi)}{\log n} \right) t
&\leq \limsup_{n \to \infty} \frac{\log m_{\sigma,t}(\Phi^nG) - n\overline{\nu}(\Phi) \cdot t}{\log n}\nonumber\\
&\leq \limsup_{n \to \infty} \left( \frac{\log m_{\sigma,0}(\Phi^nG)}{\log n} + \frac{\phi_\sigma^+(\Phi^nG)  - n\overline{\nu}(\Phi)}{\log n} \cdot t \right)\nonumber\\
&= h_\sigma^\mathrm{pol}(\Phi) + \left( \limsup_{n \to \infty} \frac{\phi_\sigma^+(\Phi^nG) - n\overline{\nu}(\Phi)}{\log n} \right) t.
\end{align}
Since $h_\mathrm{cat}(\Phi) = 0$ by assumption, we see that $h_\sigma(\Phi) = 0$ by Theorem \ref{mass-grow-thm} (2).
Thus we get
\begin{equation}\label{eq-ent}
\limsup_{n \to \infty} \frac{\log m_{\sigma,t}(\Phi^nG) - n\overline{\nu}(\Phi) \cdot t}{\log n} = \limsup_{n \to \infty} \frac{\log m_{\sigma,t}(\Phi^nG) - nh_{\sigma,t}(\Phi)}{\log n} = h_{\sigma,t}^\mathrm{pol}(\Phi)
\end{equation}
by Theorem \ref{pol-mass-grow-thm} (1) and Proposition \ref{shift-stab}.
Hence it follows that
\begin{equation*}
\lim_{t \to \infty} \frac{h_{\sigma,t}^\mathrm{pol}(\Phi)}{t} = \limsup_{n \to \infty} \frac{\phi_\sigma^+(\Phi^nG) - n\overline{\nu}(\Phi)}{\log n}.
\end{equation*}

On the other hand, note that, for $E,F \in \mathcal{D}$, the condition $\mathrm{Hom}_\mathcal{D}(E,F[-k]) \neq 0$ implies that $\phi_\sigma^+(F) - \phi_\sigma^-(E) \geq k$.
In particular, we have $\phi_\sigma^+(F) - \phi_\sigma^-(E) \geq \epsilon^+(E,F)$.
Using this, we get
\begin{equation}\label{ineq1}
\phi_\sigma^+(\Phi^nG) - \phi_\sigma^-(G) \geq \epsilon^+(G,\Phi^nG)
\end{equation}
and hence
\begin{align*}
\lim_{t \to \infty} \frac{h_{\sigma,t}^\mathrm{pol}(\Phi)}{t}
&= \limsup_{n \to \infty} \frac{\phi_\sigma^+(\Phi^nG) - n\overline{\nu}(\Phi)}{\log n}\\
&\geq \limsup_{n \to \infty} \frac{\epsilon^+(G,\Phi^nG) - n\overline{\nu}(\Phi)}{\log n}\\
&= \overline{\nu}^\mathrm{pol}(\Phi)\\
&= \lim_{t \to \infty} \frac{h_t^\mathrm{pol}(\Phi)}{t}\\
&\geq \lim_{t \to \infty} \frac{h_{\sigma,t}^\mathrm{pol}(\Phi)}{t}.
\end{align*}
Therefore we obtain
\begin{equation}\label{eq-upper-shift}
\overline{\nu}^\mathrm{pol}(\Phi) = \limsup_{n \to \infty} \frac{\phi_\sigma^+(\Phi^nG) - n\overline{\nu}(\Phi)}{\log n}.
\end{equation}
Then, substituting the equalities \eqref{eq-ent} and \eqref{eq-upper-shift} into the inequality \eqref{ineq-want-pos}, we obtain the desired inequality.

Next, let us consider the case $t \leq 0$.
As before, we obtain
\begin{equation}\label{ineq-want-neg}
\left( \liminf_{n \to \infty} \frac{\phi_\sigma^-(\Phi^nG) - n\underline{\nu}(\Phi)}{\log n} \right) t \leq h_{\sigma,t}^\mathrm{pol}(\Phi) \leq h_\sigma^\mathrm{pol}(\Phi) + \left( \liminf_{n \to \infty} \frac{\phi_\sigma^-(\Phi^nG) - n\underline{\nu}(\Phi)}{\log n} \right) t
\end{equation}
and therefore
\begin{equation*}
\lim_{t \to -\infty} \frac{h_{\sigma,t}^\mathrm{pol}(\Phi)}{t} = \liminf_{n \to \infty} \frac{\phi_\sigma^-(\Phi^nG) - n\underline{\nu}(\Phi)}{\log n}.
\end{equation*}
Let $S$ be the Serre functor of $\mathcal{D}$.	
If $0 \neq \mathrm{Hom}_\mathcal{D}(E,F[-k]) \cong \mathrm{Hom}_\mathcal{D}(F[-k],SE)$, then $\phi_\sigma^+(SE) - \phi_\sigma^-(F) \geq -k$.
In particular, we have $\phi_\sigma^+(SE) - \phi_\sigma^-(F) \geq -\epsilon^-(E,F)$.
Using this, we get
\begin{equation}\label{ineq2}
\epsilon^-(G,\Phi^nG) \geq \phi_\sigma^-(\Phi^nG) - \phi_\sigma^+(SG)
\end{equation}
and hence
\begin{align*}
\lim_{t \to -\infty} \frac{h_{\sigma,t}^\mathrm{pol}(\Phi)}{t}
&= \liminf_{n \to \infty} \frac{\phi_\sigma^-(\Phi^nG) - n\underline{\nu}(\Phi)}{\log n}\\
&\leq \liminf_{n \to \infty} \frac{\epsilon^-(G,\Phi^nG) - n\underline{\nu}(\Phi)}{\log n}\\
&= \underline{\nu}^\mathrm{pol}(\Phi)\\
&= \lim_{t \to -\infty} \frac{h_t^\mathrm{pol}(\Phi)}{t}\\
&\leq \lim_{t \to -\infty} \frac{h_{\sigma,t}^\mathrm{pol}(\Phi)}{t}.
\end{align*}
Therefore we obtain
\begin{equation}\label{eq-lower-shift}
\underline{\nu}^\mathrm{pol}(\Phi) = \liminf_{n \to \infty} \frac{\phi_\sigma^-(\Phi^nG) - n\underline{\nu}(\Phi)}{\log n}.
\end{equation}
Then, substituting the equality \eqref{eq-lower-shift} into the inequality \eqref{ineq-want-neg}, we obtain the desired inequality.
\end{proof}

The following is a polynomial analogue of Proposition \ref{shift-stab-lower}.

\begin{prop}\label{pol-shift-stab-lower}
Let $\mathcal{D}$ be a saturated triangulated category and $\sigma \in \mathrm{Stab}_\Gamma^\dagger(\mathcal{D})$.
For any endofunctor $\Phi : \mathcal{D} \to \mathcal{D}$ with $h_\mathrm{cat}(\Phi)=0$, we have
\begin{gather*}
h_{\sigma,t}^\mathrm{pol}(\Phi) \geq h_\sigma^\mathrm{pol}(\Phi) + \underline{\nu}^\mathrm{pol}(\Phi) \cdot t \quad (t \geq 0),\\
h_{\sigma,t}^\mathrm{pol}(\Phi) \geq h_\sigma^\mathrm{pol}(\Phi) + \overline{\nu}^\mathrm{pol}(\Phi) \cdot t \quad (t \leq 0).
\end{gather*}
\end{prop}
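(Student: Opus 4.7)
The plan is to mimic the proof of Proposition \ref{shift-stab-lower} with $n$ replaced by $\log n$, in the same way that Proposition \ref{pol-shift-ent-lower} parallels Proposition \ref{shift-ent-lower}. A preliminary observation I would record is that the hypothesis $h_\mathrm{cat}(\Phi) = 0$ forces $h_\sigma(\Phi) = 0$ by Theorem \ref{mass-grow-thm}(2) combined with the trivial lower bound $h_\sigma(\Phi) \geq 0$ coming from Proposition \ref{shift-stab} at $t=0$. Consequently the sandwich of Propositions \ref{shift-stab} and \ref{shift-stab-lower} collapses to the equalities $h_{\sigma,t}(\Phi) = \overline{\nu}(\Phi)\cdot t$ for $t \geq 0$ and $h_{\sigma,t}(\Phi) = \underline{\nu}(\Phi)\cdot t$ for $t \leq 0$; this is what allows the subtraction $n h_{\sigma,t}(\Phi)$ appearing in the polynomial-growth quotient to be replaced by an explicit linear function of $t$.

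Next, for $t \geq 0$ and a split-generator $G$, I would start from the elementary Harder--Narasimhan inequality
\begin{equation*}
m_{\sigma,t}(\Phi^n G) \geq m_\sigma(\Phi^n G) \cdot e^{\phi_\sigma^-(\Phi^n G)\cdot t},
\end{equation*}
valid because every HN-phase $\phi_i$ of $\Phi^n G$ satisfies $\phi_i \geq \phi_\sigma^-(\Phi^n G)$ and $e^{\phi_i t}$ is increasing in $\phi_i$ when $t\geq 0$. Taking logarithms, subtracting $n h_{\sigma,t}(\Phi) = n\overline{\nu}(\Phi)\cdot t$, and dividing by $\log n$ yields
\begin{equation*}
\frac{\log m_{\sigma,t}(\Phi^n G) - n h_{\sigma,t}(\Phi)}{\log n} \geq \frac{\log m_\sigma(\Phi^n G) - n h_\sigma(\Phi)}{\log n} + \frac{\phi_\sigma^-(\Phi^n G) - n\overline{\nu}(\Phi)}{\log n}\cdot t.
\end{equation*}
The $\limsup$ of the left-hand side equals $h_{\sigma,t}^\mathrm{pol}(\Phi)$ by Theorem \ref{pol-mass-grow-thm}(1), and the first term on the right contributes $h_\sigma^\mathrm{pol}(\Phi)$ by the same result. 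The remaining term is pushed down to a $\liminf$ via the standard $\limsup(a_n+b_n)\geq \limsup a_n + \liminf b_n$ inequality, and then identified with $\underline{\nu}^\mathrm{pol}(\Phi)\cdot t$ using the characterization of $\underline{\nu}^\mathrm{pol}$ given in Proposition \ref{pol-shift-stab} — exactly as in the proof of Proposition \ref{pol-shift-ent-lower}.

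For $t \leq 0$ the argument is entirely symmetric: since $e^{\phi_i t}$ is now decreasing in $\phi_i$, one obtains instead $m_{\sigma,t}(\Phi^n G)\geq m_\sigma(\Phi^n G)\cdot e^{\phi_\sigma^+(\Phi^n G)\cdot t}$, combines it with $h_{\sigma,t}(\Phi)=\underline{\nu}(\Phi)\cdot t$, and applies the other half of Proposition \ref{pol-shift-stab} to conclude $h_{\sigma,t}^\mathrm{pol}(\Phi)\geq h_\sigma^\mathrm{pol}(\Phi)+\overline{\nu}^\mathrm{pol}(\Phi)\cdot t$.

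I expect that the main delicate point is the swap from $\overline{\nu}(\Phi)$ (which appears naturally after using $h_{\sigma,t}(\Phi)=\overline{\nu}(\Phi)\cdot t$) to the $\underline{\nu}(\Phi)$ that enters the definition of $\underline{\nu}^\mathrm{pol}(\Phi)$ in Proposition \ref{pol-shift-stab}, and the parallel swap for $t\leq 0$; the entire subtlety is localized in that $\limsup/\liminf$ manipulation, which is handled identically to the corresponding step in the proof of Proposition \ref{pol-shift-ent-lower}. Everything else is a direct translation from the entropy/complexity setting to the mass setting.
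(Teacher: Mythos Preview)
Your proposal is correct and follows essentially the same approach as the paper: you combine the Harder--Narasimhan inequality from the proof of Proposition \ref{shift-stab-lower} with the $\limsup/\liminf$ manipulations from the proof of Proposition \ref{pol-shift-ent-lower}, which is precisely what the paper's one-sentence proof instructs. Your write-up is in fact a faithful expansion of that instruction, including the identification of the delicate $\overline{\nu}/\underline{\nu}$ swap that is handled in the same way as in Proposition \ref{pol-shift-ent-lower}.
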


\begin{proof}
This can be proved by combining the arguments in the proofs of Propositions \ref{shift-stab-lower} and \ref{pol-shift-ent-lower}.
\end{proof}

\subsection{Translation length}

Let us consider the metric space $(\mathrm{Stab}_\Gamma^\dagger(\mathcal{D}),d_B)$.
We denote the element of the quotient space $\mathrm{Stab}_\Gamma^\dagger(\mathcal{D})/\mathbb{C}$ represented by $\sigma \in \mathrm{Stab}_\Gamma^\dagger(\mathcal{D})$ as $\bar{\sigma}$.
As every $\mathbb{C}$-orbit is closed in $\mathrm{Stab}_\Gamma^\dagger(\mathcal{D})$,
\begin{equation*}
\bar{d}_B(\bar{\sigma},\bar{\tau}) \coloneqq \inf_{\alpha \in \mathbb{C}} d_B(\sigma,\tau \cdot \alpha)
\end{equation*}
gives a well-defined metric on $\mathrm{Stab}_\Gamma^\dagger(\mathcal{D})/\mathbb{C}$.
The actions of $\mathrm{Aut}(\mathcal{D})$ and $\widetilde{GL}^+(2,\mathbb{R})$ on $\mathrm{Stab}_\Gamma^\dagger(\mathcal{D})$ descend to ones on $\mathrm{Stab}_\Gamma^\dagger(\mathcal{D})/\mathbb{C}$.
Clearly, they act on $(\mathrm{Stab}_\Gamma^\dagger(\mathcal{D})/\mathbb{C},\bar{d}_B)$ isometrically.
We will study the (stable) translation length of the action of some autoequivalences on $(\mathrm{Stab}_\Gamma^\dagger(\mathcal{D})/\mathbb{C},\bar{d}_B)$.

Let us recall the definitions and their basic properties.

\begin{dfn}\label{tran-leng}
Let $(X,d)$ be a metric space and $f : X \to X$ be an isometry.
The {\em translation length} of $f$ is defined by
\begin{equation*}
\tau(f) \coloneqq \inf_{x \in X} d(x,f(x))
\end{equation*}
and the {\em stable translation length} of $f$ is defined by
\begin{equation*}
\tau^s(f) \coloneqq \lim_{n \to \infty} \frac{d(x,f^n(x))}{n}
\end{equation*}
for some $x \in X$.
\end{dfn}

Recall the following elementary fact.

\begin{lem}
The stable translation length is well-defined in the sense that the limit exists and it does not depend on the choice of $x \in X$.
Moreover $\tau(f) \geq \tau^s(f)$ holds.
\end{lem}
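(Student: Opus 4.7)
The plan is to recognize that the sequence $a_n \coloneqq d(x, f^n(x))$ is subadditive and then apply Fekete's lemma. Concretely, using the triangle inequality and the fact that $f$ is an isometry (so $d(f^n(x), f^{n+m}(x)) = d(x, f^m(x))$), one obtains
\begin{equation*}
a_{n+m} = d(x, f^{n+m}(x)) \leq d(x, f^n(x)) + d(f^n(x), f^{n+m}(x)) = a_n + a_m.
\end{equation*}
Fekete's subadditivity lemma then gives
\begin{equation*}
\lim_{n \to \infty} \frac{a_n}{n} = \inf_{n \geq 1} \frac{a_n}{n} \in [0, \infty),
\end{equation*}
so the limit defining $\tau^s(f)$ exists.

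For independence of the base point, I would fix $x, y \in X$ and estimate, again using that $f$ is an isometry,
\begin{equation*}
d(y, f^n(y)) \leq d(y, x) + d(x, f^n(x)) + d(f^n(x), f^n(y)) = d(x, f^n(x)) + 2 d(x, y).
\end{equation*}
Dividing by $n$ and letting $n \to \infty$ shows that the limit computed at $y$ is at most the one computed at $x$; by symmetry they are equal.

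For the comparison $\tau(f) \geq \tau^s(f)$, the Fekete form of the limit is crucial: for any $x \in X$,
\begin{equation*}
\tau^s(f) = \inf_{n \geq 1} \frac{d(x, f^n(x))}{n} \leq \frac{d(x, f(x))}{1} = d(x, f(x)).
\end{equation*}
Since this inequality holds for every $x \in X$, taking the infimum over $x$ on the right yields $\tau^s(f) \leq \tau(f)$.

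There is no real obstacle here; everything reduces to subadditivity and the triangle inequality. The only mild subtlety is remembering to invoke Fekete's lemma in the stronger form $\lim a_n/n = \inf a_n/n$ so that the bound $\tau^s(f) \leq d(x, f(x))$ is available uniformly in $x$ (as opposed to just knowing the limit exists).
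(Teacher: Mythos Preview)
Your proof is correct and follows essentially the same approach as the paper: subadditivity of $n \mapsto d(x,f^n(x))$ via the triangle inequality and the isometry property, Fekete's lemma for existence, a $2d(x,y)$ comparison for basepoint independence, and the infimum form of Fekete to deduce $\tau^s(f) \leq d(x,f(x))$ for all $x$. There is nothing to add.
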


\begin{proof}
The sequence $\{d(x,f^n(x))\}_{n=1}^\infty$ is subadditive.
Indeed
\begin{equation*}
d(x,f^{n+m}(x)) \leq d(x,f^n(x)) + d(f^n(x),f^{n+m}(x)) = d(x,f^n(x)) + d(x,f^m(x)).
\end{equation*}
Therefore the limit exists by Fekete's lemma.

Now fix $x,y \in X$.
Then
\begin{equation*}
d(x,f^n(x)) \leq d(x,y) + d(y,f^n(y)) + d(f^n(y),f^n(x)) = 2d(x,y) + d(y,f^n(y)).
\end{equation*}
Thus it follows that
\begin{equation*}
\lim_{n \to \infty} \frac{d(x,f^n(x))}{n} \leq \lim_{n \to \infty} \frac{d(y,f^n(y))}{n}.
\end{equation*}
Similarly, we obtain the opposite inequality.

Again by Fekete's lemma, $\tau^s(f)$ can be written as
\begin{equation*}
\tau^s(f) = \inf_{n \in \mathbb{N}} \frac{d(x,f^n(x))}{n} \leq d(x,f(x))
\end{equation*}
for any $x \in X$.
This implies that $\tau(f) \geq \tau^s(f)$.
\end{proof}

\subsection{Spectral radius and polynomial growth}

Recall that, in our situation, every $\Phi \in \mathrm{Aut}(\mathcal{D})$ induces a linear map $[\Phi]_\Gamma : \Gamma \otimes_\mathbb{Z} \mathbb{C} \to \Gamma \otimes_\mathbb{Z} \mathbb{C}$.
We will compare the categorical entropy and mass growth of $\Phi \in \mathrm{Aut}(\mathcal{D})$ with the logarithm of the spectral radius of $[\Phi]_\Gamma$.

Let us recall the definition.

\begin{dfn}\label{spec-rad}
Let $V$ be a finite dimensional vector space over $\mathbb{R}$ or $\mathbb{C}$ and $A : V \to V$ be a linear map.
Let $\lambda_1,\dots,\lambda_k \in \mathbb{C}$ be the eigenvalues of $A$.
The {\em spectral radius} $\rho(A)$ of $A$ is defined by
\begin{equation*}
\rho(A) \coloneqq \max\{ \lvert\lambda_1\rvert,\dots,\lvert\lambda_k\rvert \}.
\end{equation*}
\end{dfn}

We will also compare the polynomial categorical entropy and polynomial mass growth of $\Phi \in \mathrm{Aut}(\mathcal{D})$ with the polynomial growth rate of $[\Phi]_\Gamma$.
It is defined as follows.

\begin{dfn}[{\cite[Section 2]{CPR},\cite[Definition 4.1]{FFO}}]\label{pol-grow}
Let $V$ be a finite dimensional vector space over $\mathbb{R}$ or $\mathbb{C}$ and $A : V \to V$ be a linear map with $\rho(A) \neq 0$.
Fix a matrix norm $\lVert - \rVert$.
The {\em polynomial growth rate} $s(A)$ of $A$ is defined by
\begin{equation*}
s(A) \coloneqq \lim_{n \to \infty} \frac{\log\lVert A^n \rVert - n\log\rho(A)}{\log n}.
\end{equation*}
\end{dfn}

\begin{lem}[{\cite[Lemma 4.2]{FFO}}]\label{jordan}
The polynomial growth rate is well-defined in the sense that the limit exists and it does not depend on the choice of a matrix norm $\lVert - \rVert$.
Moreover $s(A)+1$ coincides with the maximal size of the Jordan blocks of $A$ whose eigenvalues have absolute value $\rho(A)$.
In particular, $s(A)$ is a non-negative integer.
\end{lem}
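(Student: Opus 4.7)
My plan is to prove the lemma by first reducing to the Jordan normal form of $A$, and then carefully analyzing the growth of the norm of a Jordan block raised to the $n$-th power.

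Independence of the matrix norm comes essentially for free, so I would dispose of it first. Since $V$ is finite dimensional, any two matrix norms $\lVert - \rVert_1$ and $\lVert - \rVert_2$ are equivalent, i.e., there exist constants $0 < c \leq C$ with $c \lVert B \rVert_2 \leq \lVert B \rVert_1 \leq C \lVert B \rVert_2$ for all $B$. Thus $\lvert \log \lVert A^n \rVert_1 - \log \lVert A^n \rVert_2 \rvert$ is bounded by a constant independent of $n$, so dividing by $\log n$ and taking $n \to \infty$ shows that if the limit exists for one norm, it exists for the other with the same value. This frees me to pick the most convenient norm for the rest of the argument.

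Next, I would pass to Jordan form. Write $A = P J P^{-1}$ where $J = \bigoplus_i J_{\lambda_i,k_i}$ is a block-diagonal matrix of Jordan blocks $J_{\lambda,k} = \lambda I_k + N_k$, with $N_k$ the nilpotent shift satisfying $N_k^k = 0$. Then $A^n = P J^n P^{-1}$, so by the norm equivalence argument above I may replace $\lVert A^n \rVert$ by $\lVert J^n \rVert$. For a single Jordan block, the binomial expansion (using that $\lambda I_k$ and $N_k$ commute) gives
\begin{equation*}
J_{\lambda,k}^n = \sum_{j=0}^{k-1} \binom{n}{j} \lambda^{n-j} N_k^j,
\end{equation*}
whose entries are polynomials in $n$ of degree at most $k-1$ times $\lambda^{n-j}$. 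Taking, say, the operator norm or the max-entry norm, one obtains asymptotics $\lVert J_{\lambda,k}^n \rVert \asymp n^{k-1} \lvert \lambda \rvert^n$ as $n \to \infty$ provided $\lambda \neq 0$ (when $\lambda = 0$ the block eventually gives zero, which is irrelevant here since we assumed $\rho(A) \neq 0$).

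Finally, I would assemble these estimates. Since $J$ is block diagonal, $\lVert J^n \rVert$ is comparable, up to multiplicative constants depending only on $A$, to $\max_i \lVert J_{\lambda_i,k_i}^n \rVert \asymp \max_i n^{k_i-1} \lvert \lambda_i \rvert^n$. For large $n$ the dominant contribution comes from the blocks with $\lvert \lambda_i \rvert = \rho(A)$, and among those, from the ones of maximal size; call this maximal size $k^\star$. Then
\begin{equation*}
\log \lVert A^n \rVert = n \log \rho(A) + (k^\star - 1) \log n + O(1),
\end{equation*}
so dividing by $\log n$ and letting $n \to \infty$ shows that the limit exists and equals $k^\star - 1$. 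This gives both the well-definedness of $s(A)$ and the identification $s(A) + 1 = k^\star$, hence in particular the non-negative integrality. I do not foresee a genuine obstacle here; the only real content is the binomial expansion of $J_{\lambda,k}^n$ and keeping track of which block dominates, which is a standard computation.
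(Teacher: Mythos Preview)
Your argument is correct and is the standard one: reduce to Jordan normal form, expand each block via the binomial formula, and read off the dominant asymptotic $n^{k^\star-1}\rho(A)^n$. The paper itself does not supply a proof of this lemma but simply cites \cite[Lemma 4.2]{FFO}, so there is nothing to compare against here; your write-up would serve perfectly well as a self-contained justification.
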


For the linear map $[\Phi]_\Gamma : \Gamma \otimes_\mathbb{Z} \mathbb{C} \to \Gamma \otimes_\mathbb{Z} \mathbb{C}$ induced by $\Phi \in \mathrm{Aut}(\mathcal{D})$, we denote its spectral radius (resp. polynomial growth rate) simply by $\rho(\Phi) \coloneqq \rho([\Phi]_\Gamma)$ (resp. $s(\Phi) \coloneqq s([\Phi]_\Gamma)$).

\subsection{Yomdin type inequalities}\label{yom-sec}

The Yomdin type inequalities (see Theorem \ref{yom-thm}) for the (polynomial) categorical entropy and mass growth have been proved for the following cases.

\begin{prop}[{\cite[Theorem 2.13]{KST}}]\label{yom-cat-ent}
Let $\mathcal{D}$ be a numerically finite saturated triangulated category and $\Phi : \mathcal{D} \to \mathcal{D}$ be an autoequivalence.
Then we have
\begin{equation*}
h_\mathrm{cat}(\Phi) \geq \log\rho(\Phi).
\end{equation*}
\end{prop}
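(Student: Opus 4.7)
The strategy is to use the saturation hypothesis to replace $\delta_0$ by the graded Hom-dimension $\epsilon_0$ through Proposition \ref{sat-ent}, and then bound $\epsilon_0$ from below by the (alternating) Euler pairing $\chi$, which in turn sees the action of $[\Phi]$ on the numerical Grothendieck group $\mathcal{N}(\mathcal{D})$. Concretely, fix a split-generator $G \in \mathcal{D}$, so that
\[ h_\mathrm{cat}(\Phi) = \lim_{n \to \infty} \frac{\log \epsilon_0(G, \Phi^n G)}{n}. \]

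The first key inequality is the trivial bound
\[ |\chi([E], [\Phi^n F])| \leq \epsilon_0(E, \Phi^n F) \]
for any $E, F \in \mathcal{D}$, which simply says that an alternating sum of non-negative integers is dominated by their total sum. Second, choose finitely many objects $E_1, \dots, E_N \in \mathcal{D}$ whose classes generate $\mathcal{N}(\mathcal{D})$ as an abelian group; this is possible since $\mathcal{N}(\mathcal{D})$ is a quotient of $K(\mathcal{D})$. Because $\chi$ is non-degenerate on $\mathcal{N}(\mathcal{D})$ by construction, the assignment
\[ T \longmapsto \sup_{v, w \neq 0} \frac{|\chi(v, Tw)|}{\|v\|\,\|w\|} \]
is a norm on $\mathrm{End}(\mathcal{N}(\mathcal{D}) \otimes_\mathbb{Z} \mathbb{R})$; since all norms on this finite-dimensional space are equivalent, its value on $[\Phi]^n$ still grows like $\rho(\Phi)^n$. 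Writing an arbitrary $v$ as $\sum c_i [E_i]$ and taking the infimum of $\|c\|_1$ over such representations defines a norm $\|\cdot\|_\#$ on $\mathcal{N}(\mathcal{D}) \otimes_\mathbb{Z} \mathbb{R}$, and with respect to this norm one obtains
\[ \sup_{v, w \neq 0} \frac{|\chi(v, [\Phi]^n w)|}{\|v\|_\# \, \|w\|_\#} \leq \max_{i, j} |\chi([E_i], [\Phi]^n [E_j])|, \]
whence
\[ \rho(\Phi) \leq \limsup_{n \to \infty} \Bigl( \max_{i, j} |\chi([E_i], [\Phi^n E_j])| \Bigr)^{1/n}. \]

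Finally, one has to compare $\epsilon_0(E_i, \Phi^n E_j)$ to $\epsilon_0(G, \Phi^n G)$ uniformly in $n$. Since $G$ split-generates $\mathcal{D}$, each $E_i$ is a direct summand of some iterated extension of finitely many shifts $G[l_1], \dots, G[l_m]$, and likewise for $E_j$; applying $\Phi^n$ exhibits $\Phi^n E_j$ as a summand of an iterated extension of $\Phi^n G[l'_k]$'s. Chasing the resulting long exact sequences applied to $\mathrm{Hom}(G, -)$ and then to $\mathrm{Hom}(-, \Phi^n G)$ yields a constant $C > 0$, independent of $n$, with $\epsilon_0(E_i, \Phi^n E_j) \leq C \cdot \epsilon_0(G, \Phi^n G)$ for all $i, j$. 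Combining the three inequalities and taking $\frac{1}{n}\log$ as $n \to \infty$ gives $\log \rho(\Phi) \leq h_\mathrm{cat}(\Phi)$. The main obstacle is this last step: although each individual ingredient (splitting off summands from Hom-spaces, long exact sequences across extensions, commuting $\Phi$ past shifts) is routine, one has to assemble them into a bound that is \emph{uniform} in $n$, i.e., using that the number of shifts of $G$ appearing in the resolution of each $E_i$ is fixed once and for all.
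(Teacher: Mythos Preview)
The paper does not supply its own proof of this proposition; it simply records the result with a citation to \cite[Theorem 2.13]{KST}. Your argument is correct and is essentially the standard one found there: pass from $\delta_0$ to $\epsilon_0$ via saturation, bound $\epsilon_0$ below by $|\chi|$, and then compare $|\chi|$ to an operator norm on $\mathcal{N}(\mathcal{D})\otimes\mathbb{R}$ so that Gelfand's formula produces $\rho(\Phi)$.

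One small imprecision: the sentence ``this is possible since $\mathcal{N}(\mathcal{D})$ is a quotient of $K(\mathcal{D})$'' does not justify the existence of finitely many $E_i$ whose classes generate $\mathcal{N}(\mathcal{D})$ as an abelian group, because $K(\mathcal{D})$ itself need not be finitely generated. What you actually use, however, is only that finitely many classes $\mathbb{R}$-span $\mathcal{N}(\mathcal{D})\otimes_\mathbb{Z}\mathbb{R}$, and this follows immediately from the numerical finiteness hypothesis. With that adjustment the proof goes through exactly as written; the uniform bound $\epsilon_0(E_i,\Phi^nE_j)\le C\,\epsilon_0(G,\Phi^nG)$ is indeed obtained, as you say, by applying the exact functor $\Phi^n$ to a fixed cone decomposition of each $E_j$ in terms of shifts of $G$, so the number of pieces is independent of $n$.
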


\begin{prop}[{\cite[Proposition 4.3]{FFO}}]\label{yom-pol-cat-ent}
Let $\mathcal{D}$ be a numerically finite saturated triangulated category and $\Phi : \mathcal{D} \to \mathcal{D}$ be an autoequivalence such that $h_\mathrm{cat}(\Phi) = \log\rho(\Phi)$.
Then we have
\begin{equation*}
h_\mathrm{cat}^\mathrm{pol}(\Phi) \geq s(\Phi).
\end{equation*}
\end{prop}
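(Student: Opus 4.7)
The plan is to refine the proof of Proposition~\ref{yom-cat-ent} by tracking polynomial corrections. Since $\mathcal{D}$ is saturated, Proposition~\ref{sat-pol-ent} combined with the hypothesis $h_\mathrm{cat}(\Phi)=\log\rho(\Phi)$ gives
\begin{equation*}
h_\mathrm{cat}^\mathrm{pol}(\Phi) = \limsup_{n \to \infty}\frac{\log\epsilon_0(G,\Phi^n G')-n\log\rho(\Phi)}{\log n}
\end{equation*}
for any split-generators $G, G'$. It therefore suffices to exhibit split-generators and a constant $c>0$ for which $\epsilon_0(G,\Phi^n G') \geq c\, n^{s(\Phi)}\rho(\Phi)^n$ holds for infinitely many $n$.

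The key inequality is $\epsilon_0(G,\Phi^n G') \geq |\chi(G,\Phi^n G')|$, where $\chi$ is the Euler form. Since $\chi$ descends to a non-degenerate pairing on $\mathcal{N}(\mathcal{D})$, after fixing a basis we may write $\chi(G,\Phi^n G') = [G]^T X [\Phi]^n [G']$ with $X$ an invertible integer matrix. Using the Jordan normal form of $[\Phi]$ on $\mathcal{N}(\mathcal{D}) \otimes_\mathbb{Z} \mathbb{C}$, I would pick a Jordan chain $w_0,\ldots,w_{s(\Phi)}$ at an eigenvalue $\lambda$ with $|\lambda|=\rho(\Phi)$ of maximal length $s(\Phi)+1$; then $[\Phi]^n w_{s(\Phi)} = \binom{n}{s(\Phi)} \lambda^{n-s(\Phi)} w_0 + (\text{lower-order terms})$. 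Hence if $[G']$ has nonzero $w_{s(\Phi)}$-component and $X^T[G]$ pairs nontrivially with $w_0$, the bilinear expression has absolute value at least $C\, n^{s(\Phi)}\rho(\Phi)^n$ for infinitely many $n$.

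To realize these genericity conditions by split-generators, I would observe that if $G_0$ is a split-generator and $F$ is any object then $G_0\oplus F$ is a split-generator with class $[G_0]+[F]$. Hence the set of classes of split-generators generates $\mathcal{N}(\mathcal{D})$ as an abelian group, and so cannot be contained in any proper $\mathbb{R}$-subspace of $\mathcal{N}(\mathcal{D})\otimes_\mathbb{Z}\mathbb{R}$. Split-generators $G, G'$ avoiding the two proper subspaces defined by the genericity conditions therefore exist.

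The main obstacle will be the oscillatory behavior when $\lambda \notin \mathbb{R}$: the quantity $[G]^T X [\Phi]^n[G']$ is a real integer, and the leading contributions from $\lambda$ and $\bar\lambda$ combine with phases $(\lambda/|\lambda|)^n$ that could partially cancel the top-order term for specific $n$. Because the definition of $h_\mathrm{cat}^\mathrm{pol}$ uses $\limsup$ rather than $\liminf$, it suffices that the top-order term dominates along some subsequence of $n$; I expect this to follow from the equidistribution of $\{(\lambda/|\lambda|)^n\}$ when $\lambda/|\lambda|$ is not a root of unity, together with a direct periodicity argument in the root-of-unity case.
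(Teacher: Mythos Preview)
The paper does not prove this proposition; it is quoted verbatim from \cite[Proposition 4.3]{FFO}. So there is no in-paper argument to compare against, and your proposal should be judged on its own.

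Your strategy---bounding $\epsilon_0(G,\Phi^nG')\geq|\chi(G,\Phi^nG')|$ and extracting the growth of the right-hand side from the Jordan form of $[\Phi]_\mathcal{N}$---is the natural refinement of the proof of Proposition~\ref{yom-cat-ent}, and the genericity argument for choosing split-generators is correct. The one place that is genuinely incomplete is your treatment of the oscillatory case. You phrase it as cancellation between the contributions of a single pair $\lambda,\bar\lambda$, but $[\Phi]_\mathcal{N}$ may have several distinct eigenvalues $\lambda_1,\dots,\lambda_m$ of modulus $\rho(\Phi)$ each carrying a Jordan block of size $s(\Phi)+1$. The leading part of $\chi(G,\Phi^nG')$ is then $\binom{n}{s(\Phi)}\rho(\Phi)^{n-s(\Phi)}\sum_{j=1}^m b_j\theta_j^{\,n}$ with $\theta_j=\lambda_j/|\lambda_j|$ distinct on the unit circle and $b_j$ depending bilinearly on $[G],[G']$. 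Equidistribution of a single $\theta$ is not enough; what you need is that a nonzero exponential sum $\sum_j b_j\theta_j^{\,n}$ with distinct unit $\theta_j$ has $\limsup_n\bigl|\sum_j b_j\theta_j^{\,n}\bigr|>0$. This follows from a Ces\`aro/Fourier argument: averaging $\bar\theta_k^{\,n}\sum_j b_j\theta_j^{\,n}$ over $n$ isolates $b_k$, so the sequence cannot tend to zero unless all $b_j$ vanish. With this lemma in hand your sketch goes through.

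A cleaner route that sidesteps the oscillation entirely is to pick objects $E_1,\dots,E_r$ whose classes form a basis of $\mathcal{N}(\mathcal{D})$ and bound $\epsilon_0(G,\Phi^nG)$ from below (up to constants independent of $n$) by $\sum_{i,j}|\chi(E_i,\Phi^nE_j)|$; the latter is an honest matrix norm of $X[\Phi]_\mathcal{N}^n$ and grows like $n^{s(\Phi)}\rho(\Phi)^n$ directly by Lemma~\ref{jordan}, with no phases to control. This is closer to how such Yomdin-type bounds are usually packaged.
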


\begin{prop}[{\cite[Proposition 3.11]{Ike}}]\label{yom-mass-grow}
Let $\mathcal{D}$ be a triangulated category and $\sigma \in \mathrm{Stab}_\Gamma^\dagger(\mathcal{D})$.
Let $\Phi : \mathcal{D} \to \mathcal{D}$ be an autoequivalence.
Then we have
\begin{equation*}
h_\sigma(\Phi) \geq \log\rho(\Phi).
\end{equation*}
\end{prop}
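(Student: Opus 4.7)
My plan is to exhibit, for a suitable object $E \in \mathcal{D}$ coming from the lattice $\Gamma$, the inequality $\limsup_{n \to \infty} \frac{1}{n}\log m_\sigma(\Phi^n E) \geq \log\rho(\Phi)$ directly. The bridge between masses of objects and norms of lattice vectors will be the support property (Definition \ref{stab-cond}~(6)); once that bridge is in place, the rest is Gelfand's formula together with a basis argument.

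Concretely, the first step is to show that for \emph{every} $0 \neq E \in \mathcal{D}$ one has
\begin{equation*}
m_\sigma(E) \;\geq\; \frac{1}{C}\,\lVert v([E]) \rVert,
\end{equation*}
where $C$ and $\lVert - \rVert$ are as in the support property. This follows by taking the Harder--Narasimhan filtration of $E$ with semistable factors $E_1,\dots,E_k$: since $v([E]) = \sum_i v([E_i])$ and $m_\sigma(E) = \sum_i \lvert Z_\sigma(E_i)\rvert$, the triangle inequality for $\lVert - \rVert$ combined with the support property applied to each semistable $E_i$ yields the claim. Next I fix a $\mathbb{Z}$-basis $e_1,\dots,e_r$ of $\Gamma$; by surjectivity of $v \colon K(\mathcal{D}) \to \Gamma$ each $e_i$ can be written as $v([A_i] - [B_i])$, and setting $F_i \coloneqq A_i \oplus B_i[1]$ gives $v([F_i]) = e_i$, with $F_i \neq 0$ because $e_i \neq 0$.

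Since $e_1,\dots,e_r$ is also a $\mathbb{C}$-basis of $\Gamma \otimes_\mathbb{Z} \mathbb{C}$, for any operator norm we have $\lVert [\Phi]_\Gamma^n \rVert \leq M\,\max_i \lVert [\Phi]_\Gamma^n(e_i) \rVert$ for some constant $M$ independent of $n$. By Gelfand's formula $\lim_{n \to \infty} \lVert [\Phi]_\Gamma^n \rVert^{1/n} = \rho(\Phi)$, so there must be an index $i$ for which $\limsup_n \lVert [\Phi]_\Gamma^n(e_i) \rVert^{1/n} \geq \rho(\Phi)$. Combining this with the mass lower bound of the first step and using $v([\Phi^n F_i]) = [\Phi]_\Gamma^n(e_i)$, one concludes
\begin{equation*}
h_\sigma(\Phi) \;\geq\; \limsup_{n \to \infty} \frac{\log m_\sigma(\Phi^n F_i)}{n} \;\geq\; \limsup_{n \to \infty} \frac{\log \lVert [\Phi]_\Gamma^n(e_i) \rVert - \log C}{n} \;\geq\; \log \rho(\Phi).
\end{equation*}

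The technical heart of the proof is really the support property: it is exactly the ingredient that prevents $Z_\sigma$ from collapsing classes of semistable objects, and hence guarantees the two-sided comparison between mass and lattice norm. Everything else is elementary linear algebra (Gelfand) together with the surjectivity of $v$, so I expect no further obstacles; in particular, taking a $\mathbb{Z}$-basis sidesteps the subtlety of complex eigenvectors not lying in $\Gamma \otimes_\mathbb{Z} \mathbb{R}$.
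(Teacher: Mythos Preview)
The paper does not give its own proof of this proposition; it is stated with a citation to \cite[Proposition 3.11]{Ike} and left at that. Your argument is correct and is essentially the standard one (and indeed the one in Ikeda): the support property furnishes the inequality $m_\sigma(E) \geq C^{-1}\lVert v([E])\rVert$ via the Harder--Narasimhan filtration, and then Gelfand's formula together with the elementary fact that $\limsup_n\bigl(\max_i \lVert [\Phi]_\Gamma^n e_i\rVert\bigr)^{1/n} = \max_i \limsup_n \lVert [\Phi]_\Gamma^n e_i\rVert^{1/n}$ (finitely many indices) yields the claim. Your use of surjectivity of $v$ to realise each basis vector as $v([F_i])$ is exactly the right move, and the observation that $[\Phi]_\Gamma$ is invertible (so no vector is killed) handles the only remaining edge case.
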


\begin{prop}[{\cite[Proposition 4.5]{FFO}}]\label{yom-pol-mass-grow}
Let $\mathcal{D}$ be a triangulated category and $\sigma \in \mathrm{Stab}_\Gamma^\dagger(\mathcal{D})$.
Let $\Phi : \mathcal{D} \to \mathcal{D}$ be an autoequivalence such that $h_\sigma(\Phi) = \log\rho(\Phi)$.
Then we have
\begin{equation*}
h_\sigma^\mathrm{pol}(\Phi) \geq s(\Phi).
\end{equation*}
\end{prop}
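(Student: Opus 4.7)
The strategy is to convert the Jordan-block growth of the iterates $[\Phi]_\Gamma^n$ on $\Gamma \otimes_\mathbb{Z} \mathbb{C}$ into polynomial mass growth for a well-chosen object, using the support property as the bridge. The crucial lower bound, valid for every $0 \neq E \in \mathcal{D}$, is $m_\sigma(E) \geq C^{-1}\|v([E])\|$, where $C$ and $\|\cdot\|$ are as in condition (6) of Definition \ref{stab-cond}. To see this, take the Harder--Narasimhan factors $E_1, \dots, E_k$ of $E$, apply the support property to each semistable $E_i$, and use the triangle inequality: $\|v([E])\| = \|\sum_i v([E_i])\| \leq \sum_i \|v([E_i])\| \leq C \sum_i |Z_\sigma(E_i)| = C m_\sigma(E)$. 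Replacing $E$ by $\Phi^n E$ gives $m_\sigma(\Phi^n E) \geq C^{-1}\|[\Phi]_\Gamma^n \cdot v([E])\|$.

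Next, I would realize the operator norm of $[\Phi]_\Gamma^n$ on $\Gamma \otimes \mathbb{C}$, up to a bounded factor, using classes of actual objects. Since $v : K(\mathcal{D}) \to \Gamma$ is surjective and $\Gamma$ has finite rank, there exist finitely many objects $F_1, \dots, F_N \in \mathcal{D}$ such that $\{v([F_j])\}_{j=1}^N$ spans $\Gamma \otimes_\mathbb{Z} \mathbb{C}$. By equivalence of norms on the finite-dimensional space $\mathrm{End}(\Gamma \otimes \mathbb{C})$, there is a constant $C' > 0$ with $\|A\| \leq C' \max_j \|A \cdot v([F_j])\|$ for every linear endomorphism $A$. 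Specializing to $A = [\Phi]_\Gamma^n$ and combining with the previous step yields $\|[\Phi]_\Gamma^n\| \leq CC' \max_j m_\sigma(\Phi^n F_j)$.

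Taking logarithms, subtracting $n\log\rho(\Phi)$, and dividing by $\log n$, Lemma \ref{jordan} shows the left-hand side converges to $s(\Phi)$, while the additive term $\log(CC')/\log n$ vanishes. Since the index set $\{1, \dots, N\}$ is finite, $\limsup$ commutes with $\max$, so there exists $j_0$ with $\limsup_n (\log m_\sigma(\Phi^n F_{j_0}) - n\log\rho(\Phi))/\log n \geq s(\Phi)$. Using the hypothesis $h_\sigma(\Phi) = \log\rho(\Phi)$ to rewrite the numerator, and then taking the supremum over $0 \neq E \in \mathcal{D}$ in Definition \ref{pol-mass-grow}, I would conclude $h_\sigma^\mathrm{pol}(\Phi) \geq s(\Phi)$.

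The argument is essentially a polynomial refinement of the proof of Proposition \ref{yom-mass-grow}: there one bounds $\|[\Phi]_\Gamma^n\|^{1/n}$ by $m_\sigma(\Phi^n F_{j_0})^{1/n}$, here one bounds $(\|[\Phi]_\Gamma^n\|/\rho(\Phi)^n)^{1/\log n}$. The only mildly delicate step is the second one, namely replacing a norm-realizing vector (which a priori lives in $\Gamma \otimes \mathbb{C}$) by a class of an honest object; finiteness of the rank of $\Gamma$ together with surjectivity of $v$ removes this obstacle at the cost of introducing a pigeonhole step over finitely many candidate objects.
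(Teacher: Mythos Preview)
Your argument is correct. The paper does not supply its own proof of this proposition, citing instead \cite[Proposition 4.5]{FFO}; your approach is precisely the natural polynomial refinement of Ikeda's proof of Proposition \ref{yom-mass-grow} (support property plus a finite spanning set of classes), and this is essentially the argument in \cite{FFO} as well.
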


In the next section, we will show that the opposite inequalities, i.e., the Gromov type inequalities (see Theorem \ref{gro-thm}), also hold for a certain class of autoequivalences.

\section{Gromov--Yomdin type theorems}

\subsection{Compatible triples}

In this section, we shall show that a certain class of autoequivalences satisfies the Gromov--Yomdin type theorems.
Our main notion to consider is the following.

\begin{dfn}\label{triple}
Let $\mathcal{D}$ be a triangulated category.
A triple $(\Phi,\sigma,g) \in \mathrm{Aut}(\mathcal{D}) \times \mathrm{Stab}_\Gamma^\dagger(\mathcal{D}) \times \widetilde{GL}^+(2,\mathbb{R})$ is called {\em compatible} if it satisfies
\begin{equation*}
\Phi \cdot \sigma = \sigma \cdot g.
\end{equation*}
\end{dfn}

\begin{rmk}\label{triple-rmk}
For a given $\Phi \in \mathrm{Aut}(\mathcal{D})$, there exist $\sigma \in \mathrm{Stab}_\Gamma^\dagger(\mathcal{D})$ and $g \in \widetilde{GL}^+(2,\mathbb{R})$ such that the triple $(\Phi,\sigma,g)$ is compatible if and only if the action of $\Phi$ on the quotient space $\mathrm{Stab}_\Gamma^\dagger(\mathcal{D})/\widetilde{GL}^+(2,\mathbb{R})$ has a fixed point.

As mentioned in Section \ref{intro}, an autoequivalence $\Phi \in \mathrm{Aut}(\mathcal{D})$ in a compatible triple $(\Phi,\sigma,g)$ can be thought of as a categorical analogue of a holomorphic automorphism.
\end{rmk}

Let $V$ be a vector space over $\mathbb{R}$.
We call a subset $S \subset V$ an {\em $\mathbb{R}$-spanning set} of $V$ if the linear subspace of $V$ spanned by $S$ coincides with $V$.
For $\sigma = (Z_\sigma,\mathcal{P}_\sigma) \in \mathrm{Stab}_\Gamma^\dagger(\mathcal{D})$, we say the central charge $Z_\sigma : \Gamma \to \mathbb{C}$ has {\em spanning image} if the set
\begin{equation*}
\{Z_\sigma(E) \,|\, E \text{ is } \sigma \text{-semistable} \}
\end{equation*}
is an $\mathbb{R}$-spanning set of $\mathbb{C} \cong \mathbb{R}^2$.

The following theorem says that if a triple $(\Phi,\sigma,g) \in \mathrm{Aut}(\mathcal{D}) \times \mathrm{Stab}_\Gamma^\dagger(\mathcal{D}) \times \widetilde{GL}^+(2,\mathbb{R})$ is compatible and $Z_\sigma$ has spanning image then the Gromov--Yomdin type theorems hold for $\Phi$.

\begin{thm}\label{cat-gy-thm}
Let $(\Phi,\sigma,g) \in \mathrm{Aut}(\mathcal{D}) \times \mathrm{Stab}_\Gamma^\dagger(\mathcal{D}) \times \widetilde{GL}^+(2,\mathbb{R})$ be a compatible triple such that $Z_\sigma$ has spanning image.
Then we have the following:
\begin{enumerate}
\item $h_\sigma(\Phi) = \log\rho(\Phi) = \log\rho(M_g)$.
\item $h_\sigma^\mathrm{pol}(\Phi) = s(\Phi) = s(M_g)$.
\end{enumerate}
\end{thm}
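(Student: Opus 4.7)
The plan is to translate the compatibility relation $\Phi\cdot\sigma=\sigma\cdot g$ into a linear intertwining between $[\Phi]_\Gamma$ and $M_g$ via $Z_\sigma$, to compute $h_\sigma(\Phi)$ and $h_\sigma^{\mathrm{pol}}(\Phi)$ directly in terms of $M_g$, and then to pinch everything together using the Yomdin-type Propositions \ref{yom-mass-grow} and \ref{yom-pol-mass-grow}.

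Spelling out the left and right actions recalled in Section \ref{stab-sec}, the identity $\Phi\cdot\sigma=\sigma\cdot g$ is equivalent to the pair of relations
\begin{equation*}
Z_\sigma\circ[\Phi]_\Gamma \;=\; M_g\cdot Z_\sigma, \qquad \Phi\bigl(\mathcal{P}_\sigma(\phi)\bigr)\;=\;\mathcal{P}_\sigma\bigl(f_g(\phi)\bigr)\quad(\phi\in\mathbb{R}).
\end{equation*}
In particular $\Phi$ sends $\sigma$-semistables to $\sigma$-semistables (only the phases shift, by $f_g$), and the spanning-image hypothesis makes $Z_\sigma:\Gamma\otimes_{\mathbb{Z}}\mathbb{R}\to\mathbb{C}$ surjective with $[\Phi]_\Gamma$-invariant kernel, so $M_g$ is literally the linear map induced by $[\Phi]_\Gamma$ on the quotient $(\Gamma\otimes_{\mathbb{Z}}\mathbb{R})/\ker Z_\sigma\cong\mathbb{C}$.

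Next I would compute $h_\sigma(\Phi)$ and $h_\sigma^{\mathrm{pol}}(\Phi)$ from $M_g$. For any $\sigma$-semistable $E$ the first intertwining relation gives $m_\sigma(\Phi^n E)=\lvert M_g^n Z_\sigma(E)\rvert$, and since $f_g$ is increasing, the image under $\Phi^n$ of the Harder--Narasimhan filtration of a split-generator $G$ is again a Harder--Narasimhan filtration. Invoking Theorems \ref{mass-grow-thm}(1) and \ref{pol-mass-grow-thm}(1) one therefore obtains an upper bound $m_\sigma(\Phi^n G)=\sum_i\lvert M_g^n Z_\sigma(G_i)\rvert\le C\,\lVert M_g^n\rVert$ with $C=C(G)$, which together with Gelfand's formula and the asymptotic expansion $\log\lVert M_g^n\rVert=n\log\rho(M_g)+s(M_g)\log n+o(\log n)$ afforded by Lemma \ref{jordan} gives $h_\sigma(\Phi)\le\log\rho(M_g)$ and $h_\sigma^{\mathrm{pol}}(\Phi)\le s(M_g)$. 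For the matching lower bounds the spanning-image assumption provides $\sigma$-semistables $E_1,E_2$ whose central charges form an $\mathbb{R}$-basis of $\mathbb{C}$; taking $E=E_1\oplus E_2$, equivalence of norms on $\mathbb{R}^2$ yields $m_\sigma(\Phi^n E)\ge c\,\lVert M_g^n\rVert$, and the same expansion produces $h_\sigma(\Phi)\ge\log\rho(M_g)$ and $h_\sigma^{\mathrm{pol}}(\Phi)\ge s(M_g)$.

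To close, since $M_g$ is a linear quotient of $[\Phi]_\Gamma$ the characteristic polynomial of $M_g$ divides that of $[\Phi]_\Gamma$, so $\rho(M_g)\le\rho(\Phi)$, and lifting generalized eigenvectors shows that at every eigenvalue the largest Jordan block of $M_g$ has size at most that of $[\Phi]_\Gamma$, hence $s(M_g)\le s(\Phi)$. Combining this with the previous paragraph and Propositions \ref{yom-mass-grow} and \ref{yom-pol-mass-grow} (the latter applicable once (1) is established) yields
\begin{equation*}
\log\rho(M_g)\;=\;h_\sigma(\Phi)\;\ge\;\log\rho(\Phi)\;\ge\;\log\rho(M_g), \qquad s(M_g)\;=\;h_\sigma^{\mathrm{pol}}(\Phi)\;\ge\;s(\Phi)\;\ge\;s(M_g),
\end{equation*}
forcing all equalities in (1) and (2). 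The delicate step is the matching lower bound above: one must ensure that $\lVert M_g^n v\rVert$ really grows at the full rate $\lVert M_g^n\rVert$ for the test vector $v=Z_\sigma(E)$, rather than being trapped in a slower-growing $M_g$-invariant subspace. The spanning-image hypothesis is precisely what prevents this, after which equivalence of norms on $\mathbb{R}^2$ closes the argument; everything else is bookkeeping with Gelfand's formula and Lemma \ref{jordan}.
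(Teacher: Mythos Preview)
Your proof is correct and follows essentially the same route as the paper: translate compatibility into $Z_\sigma\circ[\Phi]_\Gamma=M_g\cdot Z_\sigma$, bound $m_\sigma(\Phi^nE)\le\lVert M_g^n\rVert\, m_\sigma(E)$ to get $h_\sigma(\Phi)\le\log\rho(M_g)$ and $h_\sigma^{\mathrm{pol}}(\Phi)\le s(M_g)$, observe that $M_g$ is the quotient of $[\Phi]_\Gamma$ by $\ker Z_\sigma$ so that $\rho(M_g)\le\rho(\Phi)$ and $s(M_g)\le s(\Phi)$ (the paper phrases this via the minimal polynomial, you via the characteristic polynomial and Jordan blocks, but the content is identical), and close the loop with the Yomdin-type Propositions~\ref{yom-mass-grow} and~\ref{yom-pol-mass-grow}. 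Your additional direct lower bound $h_\sigma(\Phi)\ge\log\rho(M_g)$ via the test object $E_1\oplus E_2$ is valid but redundant: once the upper bound, the quotient inequality, and Yomdin are in place, the chain $h_\sigma(\Phi)\le\log\rho(M_g)\le\log\rho(\Phi)\le h_\sigma(\Phi)$ already forces all equalities without it.
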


\begin{proof}
(1)
Note that the compatibility of the triple $(\Phi,\sigma,g)$ implies that
\begin{equation}\label{gy-cond}
Z_\sigma \circ [\Phi]_\Gamma = M_g \cdot Z_\sigma
\end{equation}
and that $\Phi$ sends $\sigma$-semistable objects (of phase $\phi$) to $\sigma$-semistable objects (of phase $f_g(\phi)$).
Take any object $0 \neq E \in \mathcal{D}$ then
\begin{equation*}
v(E) = \sum_{i=1}^k v(E_i) \in \Gamma
\end{equation*}
where $E_1,\dots,E_k$ are the semistable factors of the Harder--Narasimhan filtration of $E$ with respect to $\sigma$.
Then, for every $n \in \mathbb{N}$, $\Phi^nE_1,\dots,\Phi^nE_k$ are the semistable factors of the Harder--Narasimhan filtration of $\Phi^nE$ with respect to $\sigma$.
Therefore we have
\begin{equation*}
m_\sigma(\Phi^nE) = \sum_{i=1}^k \lvert Z_\sigma(\Phi^nE_i) \rvert = \sum_{i=1}^k \lvert M_g^n Z_\sigma(E_i) \rvert \leq \lVert M_g^n \rVert \sum_{i=1}^k \lvert Z_\sigma(E_i) \rvert = \lVert M_g^n \rVert m_\sigma(E)
\end{equation*}
where $\lVert - \rVert$ is the matrix norm induced by the standard vector norm on $\mathbb{R}^2$.
Then, by Gelfand's formula, we obtain
\begin{equation*}
h_\sigma(\Phi) = \sup_{0 \neq E \in \mathcal{D}} \left( \limsup_{n \to \infty} \frac{\log m_{\sigma,t}(\Phi^nE)}{n} \right) \leq \lim_{n \to \infty} \frac{\log \lVert M_g^n \rVert}{n} = \log\rho(M_g).
\end{equation*}

Now let $\mu_\Phi(t)$ be the minimal polynomial of $[\Phi]_\Gamma$.
Then, by Cayley--Hamilton theorem and the equation \eqref{gy-cond}, we have
\begin{equation*}
0 = Z_\sigma(\mu_\Phi([\Phi]_\Gamma)v) = \mu_\Phi(M_g)Z_\sigma(v)
\end{equation*}
for every $v \in \Gamma$.
From this, we see that $\mu_\Phi(M_g) = 0$ as $Z_\sigma$ is assumed to have spanning image.
This implies that the minimal polynomial of $M_g$ divides $\mu_\Phi$ and hence every eigenvalue of $M_g$ is an eigenvalue of $[\Phi]_\Gamma$.
Therefore we have
\begin{equation*}
\log\rho(M_g) \leq \log\rho(\Phi) \leq h_\sigma(\Phi)
\end{equation*}
where the second inequality follows from Proposition \ref{yom-mass-grow}.

(2)
Note that $h_\sigma(\Phi) = \log\rho(M_g)$ by (1).
As in the proof of (1), we obtain
\begin{align*}
h_\sigma^\mathrm{pol}(\Phi)
&= \sup_{0 \neq E \in \mathcal{D}} \limsup_{n \to \infty} \frac{\log m_\sigma(\Phi^nE) - nh_\sigma(\Phi)}{\log n}\\
&\leq \limsup_{n \to \infty} \frac{\log\lVert M_g^n \rVert - n\log\rho(M_g)}{\log n}\\
&= s(M_g).
\end{align*}
Since $s(M_g)+1$ (resp. $s(\Phi)+1$) coincides with the maximal size of the Jordan blocks of $M_g$ (resp. $[\Phi]_\Gamma$) whose eigenvalues have absolute value $\rho(M_g)=\rho(\Phi)$ by Lemma \ref{jordan}, the minimal polynomial argument as in the proof of (1) shows that
\begin{equation*}
s(M_g) \leq s(\Phi) \leq h_\sigma^\mathrm{pol}(\Phi)
\end{equation*}
where the second inequality follows from Proposition \ref{yom-pol-mass-grow}.
\end{proof}

Combining Theorems \ref{mass-grow-thm} (2), \ref{pol-mass-grow-thm} (2) and \ref{cat-gy-thm}, we immediately obtain the following corollary.

\begin{cor}
Let $(\Phi,\sigma,g) \in \mathrm{Aut}(\mathcal{D}) \times \mathrm{Stab}_\Gamma^\dagger(\mathcal{D}) \times \widetilde{GL}^+(2,\mathbb{R})$ be a compatible triple such that $Z_\sigma$ has spanning image.
Assume that $\sigma$ lies in a connected component containing an algebraic stability condition.
Then we have the following:
\begin{enumerate}
\item $h_\mathrm{cat}(\Phi) = \log\rho(\Phi) = \log\rho(M_g)$.
\item $h_\mathrm{cat}^\mathrm{pol}(\Phi) = s(\Phi) = s(M_g)$.
\end{enumerate}
\end{cor}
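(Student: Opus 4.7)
The plan is to combine the three cited results more or less mechanically: Theorem \ref{cat-gy-thm} already supplies the analogous equalities for the mass growth and polynomial mass growth, so it only remains to transfer these to the categorical and polynomial categorical entropy via the comparison statements in Theorems \ref{mass-grow-thm} (2) and \ref{pol-mass-grow-thm} (2). The spanning image hypothesis is used only through Theorem \ref{cat-gy-thm}; the algebraicity hypothesis is used only to upgrade the inequality $h_{\sigma,t}(\Phi) \leq h_t(\Phi)$ (and its polynomial counterpart) to an equality.

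For part (1), I would first specialise Theorem \ref{mass-grow-thm} (2) to $t=0$: since $\sigma$ lies in a connected component containing an algebraic stability condition, one obtains $h_\sigma(\Phi) = h_\mathrm{cat}(\Phi)$. Theorem \ref{cat-gy-thm} (1), which applies because the triple is compatible and $Z_\sigma$ has spanning image, gives $h_\sigma(\Phi) = \log\rho(\Phi) = \log\rho(M_g)$. Stringing these together yields $h_\mathrm{cat}(\Phi) = \log\rho(\Phi) = \log\rho(M_g)$.

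For part (2), the same strategy applies: Theorem \ref{pol-mass-grow-thm} (2) at $t=0$ gives $h_\sigma^\mathrm{pol}(\Phi) = h_\mathrm{cat}^\mathrm{pol}(\Phi)$ (again the algebraicity of the connected component is what promotes the inequality to an equality, and $0 \in I_\sigma^\Phi \cap I^\Phi$ since $h_\sigma(\Phi), h_\mathrm{cat}(\Phi) \in [0,\infty)$), while Theorem \ref{cat-gy-thm} (2) provides $h_\sigma^\mathrm{pol}(\Phi) = s(\Phi) = s(M_g)$. Combining these gives the desired chain of equalities.

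Since each step is an immediate invocation of an already-proved result, there is no genuine obstacle in this argument; the only point worth flagging is that one should verify the hypotheses of Theorems \ref{mass-grow-thm} (2) and \ref{pol-mass-grow-thm} (2) are precisely the algebraicity assumption included in the statement, and that the spanning image and compatibility assumptions feed into Theorem \ref{cat-gy-thm}. No additional analysis of $\Phi$, $\sigma$, or $g$ is required.
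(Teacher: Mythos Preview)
Your proposal is correct and matches the paper's own argument exactly: the paper states that the corollary follows immediately by combining Theorems \ref{mass-grow-thm} (2), \ref{pol-mass-grow-thm} (2) and \ref{cat-gy-thm}, which is precisely the chain of implications you describe.
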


\subsection{Saturated case}

Let $\mathcal{D}$ be a saturated triangulated category and $(\Phi,\sigma,g) \in \mathrm{Aut}(\mathcal{D}) \times \mathrm{Stab}_\Gamma^\dagger(\mathcal{D}) \times \widetilde{GL}^+(2,\mathbb{R})$ be a compatible triple.
In this section, we show that the categorical entropy $h_t(\Phi)$ and the mass growth $h_{\sigma,t}(\Phi)$ (resp. their polynomial analogues) are linear in $t$ where the slopes are given by the shifting number.
As a corollary, we will see that the Gromov--Yomdin type theorem for the categorical entropy (resp. polynomial categorical entropy) of $\Phi$ holds if and only if $h_t(\Phi) = h_{\sigma,t}(\Phi)$ (resp. $h_t^\mathrm{pol}(\Phi) = h_{\sigma,t}^\mathrm{pol}(\Phi)$) for all $t$.

One of the key ingredients is the following.

\begin{prop}[{\cite[Proposition 6.17]{KP}}]\label{shift-prop}
Suppose $\mathcal{D}$ is saturated and let $(\Phi,\sigma,g) \in \mathrm{Aut}(\mathcal{D}) \times \mathrm{Stab}_\Gamma^\dagger(\mathcal{D}) \times \widetilde{GL}^+(2,\mathbb{R})$ be a compatible triple.
Then we have $\overline{\nu}(\Phi) = \underline{\nu}(\Phi)$.
\end{prop}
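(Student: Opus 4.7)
The plan is to reduce the equality $\overline{\nu}(\Phi) = \underline{\nu}(\Phi)$ to an elementary fact about lifts of circle homeomorphisms (essentially the existence and uniqueness of the Poincar\'e rotation number), using the compatibility condition to track how $\Phi$ acts on phases of semistable objects.

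First, I would unpack the compatibility condition $\Phi \cdot \sigma = \sigma \cdot g$. Comparing the descriptions of the left action of $\mathrm{Aut}(\mathcal{D})$ and the right action of $\widetilde{GL}^+(2,\mathbb{R})$, this identity translates into the equality of slicings $\Phi(\mathcal{P}_\sigma(\phi)) = \mathcal{P}_\sigma(f_g(\phi))$ for every $\phi \in \mathbb{R}$. In other words, $\Phi$ sends $\sigma$-semistable objects of phase $\phi$ to $\sigma$-semistable objects of phase $f_g(\phi)$. Since $f_g$ is strictly increasing, applying $\Phi$ to a Harder--Narasimhan filtration of $E$ with phases $\phi_1 > \cdots > \phi_k$ yields a valid Harder--Narasimhan filtration of $\Phi E$ with phases $f_g(\phi_1) > \cdots > f_g(\phi_k)$. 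Iterating, for a split-generator $G$ and every $n \geq 0$,
\begin{equation*}
\phi_\sigma^+(\Phi^n G) = f_g^n(\phi_\sigma^+(G)), \qquad \phi_\sigma^-(\Phi^n G) = f_g^n(\phi_\sigma^-(G)).
\end{equation*}

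Next, I would invoke Proposition \ref{shift-stab} to express the shifting numbers as
\begin{equation*}
\overline{\nu}(\Phi) = \lim_{n \to \infty} \frac{f_g^n(\phi_\sigma^+(G))}{n}, \qquad \underline{\nu}(\Phi) = \lim_{n \to \infty} \frac{f_g^n(\phi_\sigma^-(G))}{n},
\end{equation*}
both limits being known to exist by Proposition \ref{shift-ent}. Thus it suffices to show that $\lim_{n\to\infty} f_g^n(x)/n$ is independent of $x \in \mathbb{R}$.

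The final step is the standard argument for Poincar\'e's rotation number. Given any $x, y \in \mathbb{R}$, pick $k \in \mathbb{Z}$ with $x + k \leq y \leq x + k + 1$. Since $f_g$ is increasing and satisfies $f_g(\phi+1) = f_g(\phi)+1$, the iterate $f_g^n$ is also increasing and commutes with integer translations, so
\begin{equation*}
f_g^n(x) + k \leq f_g^n(y) \leq f_g^n(x) + k + 1,
\end{equation*}
which gives the uniform bound $|f_g^n(y) - f_g^n(x)| \leq |y - x| + 1$ for all $n$. Dividing by $n$ and letting $n \to \infty$ forces the two limits above to coincide, so $\overline{\nu}(\Phi) = \underline{\nu}(\Phi)$.

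The only non-cosmetic step is the phase-tracking in the first paragraph: one must be careful that applying $\Phi$ really preserves the structure of HN filtrations, which relies on $f_g$ being strictly monotonic (ensuring distinct phases remain distinct and ordered). Once that is in place, the proof is essentially the one-line observation that any two orbits of a lift of a circle homeomorphism stay within bounded distance of one another, a classical feature of the rotation number.
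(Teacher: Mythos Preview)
Your proof is correct and follows essentially the same strategy as the one the paper imports from \cite[Proposition~6.17]{KP}. Although the paper does not reproduce that proof here, its use in the proof of Proposition~\ref{pol-shift-prop} makes the structure clear: one shows, via the compatibility, that $\phi_\sigma^\pm(\Phi^nG)=f_g^n(\phi_\sigma^\pm(G))$, and then invokes the elementary rotation-number fact that $f_g^n(\phi_1)-f_g^n(\phi_2)$ stays bounded for any $\phi_1,\phi_2\in\mathbb{R}$; your argument is exactly this, combined with the phase description of the shifting numbers from Proposition~\ref{shift-stab}.
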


We can prove a similar statement for the polynomial case.

\begin{prop}\label{pol-shift-prop}
Suppose $\mathcal{D}$ is saturated and let $(\Phi,\sigma,g) \in \mathrm{Aut}(\mathcal{D}) \times \mathrm{Stab}_\Gamma^\dagger(\mathcal{D}) \times \widetilde{GL}^+(2,\mathbb{R})$ be a compatible triple such that $h_\mathrm{cat}(\Phi) = 0$.
Then we have $\overline{\nu}^\mathrm{pol}(\Phi) = \underline{\nu}^\mathrm{pol}(\Phi)$.
\end{prop}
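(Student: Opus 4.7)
The plan is to reduce the claim to a statement about the iterates of $f_g$ on $\mathbb{R}$. By Proposition \ref{shift-prop}, set $\nu \coloneqq \overline{\nu}(\Phi) = \underline{\nu}(\Phi)$. The compatibility relation $\Phi \cdot \sigma = \sigma \cdot g$ implies that $\Phi$ carries every $\sigma$-semistable object of phase $\phi$ to a $\sigma$-semistable object of phase $f_g(\phi)$, so if $G$ is a split-generator whose Harder--Narasimhan filtration has semistable factors of phases $\phi_1 > \cdots > \phi_k$, iterating yields $\phi_\sigma^+(\Phi^n G) = f_g^n(\phi_1)$ and $\phi_\sigma^-(\Phi^n G) = f_g^n(\phi_k)$. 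Plugging these into Proposition \ref{pol-shift-stab}, the target identity becomes
\begin{equation*}
\limsup_{n \to \infty} \frac{f_g^n(\phi_1) - n\nu}{\log n} = \liminf_{n \to \infty} \frac{f_g^n(\phi_k) - n\nu}{\log n}.
\end{equation*}

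I aim to prove the stronger statement that for each fixed $\phi \in \mathbb{R}$ the sequence $\{f_g^n(\phi) - n\nu\}_{n \geq 1}$ is uniformly bounded. Via Proposition \ref{shift-stab}, $\nu$ coincides with the Poincar\'e translation number $\tau(f_g) = \lim_n f_g^n(\phi)/n$ of the increasing homeomorphism $f_g \colon \mathbb{R} \to \mathbb{R}$ that commutes with integer translations. Once boundedness holds, both the limsup and the liminf above equal $0$, and in fact $\overline{\nu}^{\mathrm{pol}}(\Phi) = 0 = \underline{\nu}^{\mathrm{pol}}(\Phi)$, which is stronger than the claim.

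Boundedness will be checked by case analysis on $M_g \in GL^+(2,\mathbb{R})$. In the elliptic case (complex-conjugate eigenvalues $re^{\pm i\theta}$), write $M_g = r A R_\theta A^{-1}$; then $f_g = h^{-1} \circ (\phi \mapsto \phi + \theta/\pi) \circ h$ for an increasing homeomorphism $h \colon \mathbb{R} \to \mathbb{R}$ with $h(\phi+1) = h(\phi)+1$ tracking how $A^{-1}$ changes phases, and iterating gives
\begin{equation*}
f_g^n(\phi) - n(\theta/\pi) = \bigl(h^{-1}(h(\phi) + n\theta/\pi) - (h(\phi) + n\theta/\pi)\bigr) + h(\phi),
\end{equation*}
which is bounded because $y \mapsto h^{-1}(y) - y$ is continuous and $1$-periodic. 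In the hyperbolic and parabolic cases (real eigenvalues of the same sign), after possibly absorbing a sign in $M_g$, every orbit of $f_g$ on the circle converges modulo $\mathbb{Z}$ to an attracting fixed phase, and $\nu$ is an integer, so $f_g^n(\phi) - n\nu$ is bounded. The main technical obstacle is the elliptic case: one must unpack the explicit conjugacy $h$ to a rigid rotation and crucially exploit that $h$ commutes with integer translations; the real-eigenvalue cases are easier, since the dynamics on $S^1$ contract toward fixed phases.
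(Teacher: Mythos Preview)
Your proof is correct and arrives at a slightly stronger conclusion than the paper's, namely $\overline{\nu}^{\mathrm{pol}}(\Phi)=\underline{\nu}^{\mathrm{pol}}(\Phi)=0$. The two arguments differ in how they get there. The paper works with the $\epsilon^\pm$ description of the polynomial shifting numbers (Proposition~\ref{pol-shift-ent}), bounds $\epsilon^+(G,\Phi^nG)-\epsilon^-(G,\Phi^nG)$ by $\phi_\sigma^+(\Phi^nG)-\phi_\sigma^-(\Phi^nG)$ plus a constant via the inequalities \eqref{ineq1} and \eqref{ineq2}, rewrites this as $f_g^n(\phi_\sigma^+(G))-f_g^n(\phi_\sigma^-(G))$, and then cites the proof of \cite[Proposition~6.17]{KP} to conclude this quantity is $o(\log n)$. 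You instead use the phase description directly (Proposition~\ref{pol-shift-stab}), which avoids the passage through $\epsilon^\pm$ and the Serre-functor inequality, and then prove outright that $f_g^n(\phi)-n\nu$ is bounded for each $\phi$.

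Your route is more self-contained and yields the explicit value $0$; the paper's route is shorter because it recycles the estimate from \cite{KP}. One remark: your case analysis on $M_g$ is correct but heavier than necessary. The boundedness of $f_g^n(\phi)-n\nu$ holds for \emph{any} lift of an orientation-preserving circle homeomorphism, with no reference to $GL^+(2,\mathbb{R})$: since $f_g^n-\mathrm{id}$ is continuous and $1$-periodic, its oscillation is less than $1$, and a telescoping argument then gives $|f_g^n(\phi)-\phi-n\nu|<1$ for all $n$ and $\phi$. This single estimate replaces all three cases and is essentially what the proof of \cite[Proposition~6.17]{KP} provides.
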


\begin{proof}
By Proposition \ref{shift-prop}, we have $\overline{\nu}(\Phi) = \underline{\nu}(\Phi)$.
Then, using the inequalities \eqref{ineq1} and \eqref{ineq2}, we obtain
\begin{align*}
0
&\leq \overline{\nu}^\mathrm{pol}(\Phi) - \underline{\nu}^\mathrm{pol}(\Phi)\\
&\leq \limsup_{n \to \infty} \frac{\epsilon^+(G,\Phi^nG) - n\overline{\nu}(\Phi)}{\log n} - \liminf_{n \to \infty} \frac{\epsilon^-(G,\Phi^nG) - n\underline{\nu}(\Phi)}{\log n}\\
&= \limsup_{n \to \infty} \frac{\epsilon^+(G,\Phi^nG) - \epsilon^-(G,\Phi^nG)}{\log n}\\
&\leq \limsup_{n \to \infty} \frac{\phi_\sigma^+(\Phi^nG) - \phi_\sigma^-(\Phi^nG) + \phi_\sigma^+(SG) - \phi_\sigma^-(G)}{\log n}\\
&= \limsup_{n \to \infty} \frac{f_g^n\phi_\sigma^+(G) - f_g^n\phi_\sigma^-(G)}{\log n}.
\end{align*}
In general, for any increasing function $f : \mathbb{R} \to \mathbb{R}$ such that $f(\phi+1) = f(\phi)$, the proof of \cite[Proposition 6.17]{KP} shows that
\begin{equation*}
\lim_{n \to \infty} \frac{f^n(\phi_1) - f^n(\phi_2)}{\log n} = 0
\end{equation*}
for any $\phi_1,\phi_2 \in \mathbb{R}$.
This shows that $\overline{\nu}^\mathrm{pol}(\Phi) = \underline{\nu}^\mathrm{pol}(\Phi)$.
\end{proof}

\begin{prop}\label{linearity}
Suppose $\mathcal{D}$ is saturated and let $(\Phi,\sigma,g) \in \mathrm{Aut}(\mathcal{D}) \times \mathrm{Stab}_\mathcal{N}^\dagger(\mathcal{D}) \times \widetilde{GL}^+(2,\mathbb{R})$ be a compatible triple.
Then we have the following:
\begin{enumerate}
\item $h_t(\Phi) = h_\mathrm{cat}(\Phi) + \overline{\nu}(\Phi) \cdot t$.
\item $h_{\sigma,t}(\Phi) = h_\sigma(\Phi) + \overline{\nu}(\Phi) \cdot t$.
\item $h_t^\mathrm{pol}(\Phi) = h_\mathrm{cat}^\mathrm{pol}(\Phi) + \overline{\nu}^\mathrm{pol}(\Phi) \cdot t$ under the assumption that $h_\mathrm{cat}(\Phi) = 0$.
\item $h_{\sigma,t}^\mathrm{pol}(\Phi) = h_\sigma^\mathrm{pol}(\Phi) + \overline{\nu}^\mathrm{pol}(\Phi) \cdot t$ under the assumption that $h_\mathrm{cat}(\Phi) = 0$.
\end{enumerate}
\end{prop}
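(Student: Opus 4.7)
The plan is to observe that Proposition \ref{linearity} is essentially a pinching argument: the upper and lower bounds for $h_t(\Phi)$, $h_{\sigma,t}(\Phi)$ and their polynomial analogues already proved in the paper are in terms of the upper \emph{and} lower (polynomial) shifting numbers, and the compatibility hypothesis forces these two numbers to coincide via Propositions \ref{shift-prop} and \ref{pol-shift-prop}. Consequently the bounds degenerate to equalities.

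More concretely, for part (1) I would treat the two sign regimes separately. For $t \geq 0$, Proposition \ref{shift-ent} gives the upper bound $h_t(\Phi) \leq h_\mathrm{cat}(\Phi) + \overline{\nu}(\Phi)\,t$, while Proposition \ref{shift-ent-lower} gives the lower bound $h_t(\Phi) \geq h_\mathrm{cat}(\Phi) + \underline{\nu}(\Phi)\,t$. Proposition \ref{shift-prop} says $\overline{\nu}(\Phi) = \underline{\nu}(\Phi)$, so the two bounds coincide and force equality. For $t \leq 0$, the roles of $\overline{\nu}$ and $\underline{\nu}$ are swapped in the upper and lower estimates, but the same equality $\overline{\nu}(\Phi) = \underline{\nu}(\Phi)$ again collapses the inequalities. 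Part (2) is identical in structure, substituting Proposition \ref{shift-stab} for the upper bound and Proposition \ref{shift-stab-lower} for the lower bound on the mass growth.

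For parts (3) and (4), under the standing hypothesis $h_\mathrm{cat}(\Phi) = 0$, Proposition \ref{pol-shift-prop} provides $\overline{\nu}^\mathrm{pol}(\Phi) = \underline{\nu}^\mathrm{pol}(\Phi)$. Applying Propositions \ref{pol-shift-ent} and \ref{pol-shift-ent-lower} in the same pinching style yields (3); Propositions \ref{pol-shift-stab} and \ref{pol-shift-stab-lower} yield (4). In each case one has to be a little careful that the hypothesis $h_\mathrm{cat}(\Phi) = 0$ is what unlocks the polynomial bounds (it is invoked in all four polynomial propositions), but after that the argument is purely formal.

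There is no genuine obstacle here, since all the technical content has been packaged into the earlier results; the only non-cosmetic input is the coincidence of upper and lower (polynomial) shifting numbers for compatible triples, which is already stated in Propositions \ref{shift-prop} and \ref{pol-shift-prop}. The only thing to watch is bookkeeping the sign of $t$ so the correct pair of inequalities is invoked in each regime.
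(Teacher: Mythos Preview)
Your proposal is correct and matches the paper's proof essentially verbatim: the paper also pinches $h_t(\Phi)$ (resp.\ $h_{\sigma,t}(\Phi)$, $h_t^\mathrm{pol}(\Phi)$, $h_{\sigma,t}^\mathrm{pol}(\Phi)$) between the upper and lower bounds from Propositions \ref{shift-ent}/\ref{shift-ent-lower} (resp.\ \ref{shift-stab}/\ref{shift-stab-lower}, \ref{pol-shift-ent}/\ref{pol-shift-ent-lower}, \ref{pol-shift-stab}/\ref{pol-shift-stab-lower}) and invokes Propositions \ref{shift-prop} and \ref{pol-shift-prop} to collapse $\overline{\nu}=\underline{\nu}$ and $\overline{\nu}^\mathrm{pol}=\underline{\nu}^\mathrm{pol}$.
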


\begin{proof}
(1)
By Propositions \ref{shift-ent} and \ref{shift-ent-lower}, we have
\begin{gather*}
h_\mathrm{cat}(\Phi) + \underline{\nu}(\Phi) \cdot t \leq h_t(\Phi) \leq h_\mathrm{cat}(\Phi) + \overline{\nu}(\Phi) \cdot t \quad (t \geq 0),\\
h_\mathrm{cat}(\Phi) + \overline{\nu}(\Phi) \cdot t \leq h_t(\Phi) \leq h_\mathrm{cat}(\Phi) + \underline{\nu}(\Phi) \cdot t \quad (t \leq 0).
\end{gather*}
As $\overline{\nu}(\Phi) = \underline{\nu}(\Phi)$ by Proposition \ref{shift-prop}, we see that $h_t(\Phi) = h_\mathrm{cat}(\Phi) + \overline{\nu}(\Phi) \cdot t$ for all $t \in \mathbb{R}$.

(2) Similarly, apply Propositions \ref{shift-stab}, \ref{shift-stab-lower} and \ref{shift-prop}.

(3) Similarly, apply Propositions \ref{pol-shift-ent}, \ref{pol-shift-ent-lower} and \ref{pol-shift-prop}.

(4) Similarly, apply Propositions \ref{pol-shift-stab}, \ref{pol-shift-stab-lower} and \ref{pol-shift-prop}.
\end{proof}

As a corollary, we obtain a necessary and sufficient condition for the Gromov--Yomdin type theorem for the (polynomial) categorical entropy.

\begin{cor}\label{iff-cor}
Suppose $\mathcal{D}$ is saturated and let $(\Phi,\sigma,g) \in \mathrm{Aut}(\mathcal{D}) \times \mathrm{Stab}_\mathcal{N}^\dagger(\mathcal{D}) \times \widetilde{GL}^+(2,\mathbb{R})$ be a compatible triple such that $Z_\sigma$ has spanning image.
Then we have the following:
\begin{enumerate}
\item $h_\mathrm{cat}(\Phi) = \log\rho(\Phi)$ if and only if $h_t(\Phi) = h_{\sigma,t}(\Phi)$ for all $t \in \mathbb{R}$.
\item $h_\mathrm{cat}^\mathrm{pol}(\Phi) = s(\Phi)$ if and only if $h_t^\mathrm{pol}(\Phi) = h_{\sigma,t}^\mathrm{pol}(\Phi)$ for all $t \in \mathbb{R}$ under the assumption that $h_\mathrm{cat}(\Phi) = 0$.
\end{enumerate}
\end{cor}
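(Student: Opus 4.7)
The plan is to assemble this corollary directly from the two theorems that precede it, namely Theorem \ref{cat-gy-thm} (which identifies $h_\sigma(\Phi)$ with $\log\rho(\Phi)$ and $h_\sigma^{\mathrm{pol}}(\Phi)$ with $s(\Phi)$ whenever $Z_\sigma$ has spanning image) and Proposition \ref{linearity} (which asserts that, for compatible triples over a saturated $\mathcal{D}$, the functions $h_t(\Phi)$, $h_{\sigma,t}(\Phi)$, and their polynomial counterparts, are all linear in $t$ with slope the upper shifting number $\overline{\nu}(\Phi)$, respectively $\overline{\nu}^{\mathrm{pol}}(\Phi)$). The point is that once both functions in a putative equality are affine in $t$ with the \emph{same} slope, the whole equality collapses to its value at $t=0$.

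For part (1), I would first invoke Proposition \ref{linearity}(1)(2) to write
\begin{equation*}
h_t(\Phi) \;=\; h_\mathrm{cat}(\Phi) + \overline{\nu}(\Phi)\cdot t, \qquad h_{\sigma,t}(\Phi) \;=\; h_\sigma(\Phi) + \overline{\nu}(\Phi)\cdot t
\end{equation*}
for all $t \in \mathbb{R}$. Subtracting, the identity $h_t(\Phi) = h_{\sigma,t}(\Phi)$ for all $t$ is equivalent to the single scalar identity $h_\mathrm{cat}(\Phi) = h_\sigma(\Phi)$. Next I would apply Theorem \ref{cat-gy-thm}(1), which under the spanning image hypothesis gives $h_\sigma(\Phi) = \log\rho(\Phi)$. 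Combining the two reductions yields the equivalence $h_\mathrm{cat}(\Phi) = \log\rho(\Phi) \iff h_t(\Phi) = h_{\sigma,t}(\Phi)$ for all $t$.

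For part (2), the argument is formally identical but uses the polynomial branch: under the hypothesis $h_\mathrm{cat}(\Phi) = 0$, Proposition \ref{linearity}(3)(4) gives
\begin{equation*}
h_t^{\mathrm{pol}}(\Phi) \;=\; h_\mathrm{cat}^{\mathrm{pol}}(\Phi) + \overline{\nu}^{\mathrm{pol}}(\Phi)\cdot t, \qquad h_{\sigma,t}^{\mathrm{pol}}(\Phi) \;=\; h_\sigma^{\mathrm{pol}}(\Phi) + \overline{\nu}^{\mathrm{pol}}(\Phi)\cdot t,
\end{equation*}
so that the equality of functions is equivalent to $h_\mathrm{cat}^{\mathrm{pol}}(\Phi) = h_\sigma^{\mathrm{pol}}(\Phi)$. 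Then Theorem \ref{cat-gy-thm}(2) supplies $h_\sigma^{\mathrm{pol}}(\Phi) = s(\Phi)$, concluding the equivalence.

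Since every nontrivial input has already been proved, there is no real obstacle in the proof of the corollary itself; the only delicate point worth flagging explicitly is that the polynomial case requires the standing hypothesis $h_\mathrm{cat}(\Phi) = 0$ (so that $h_\sigma(\Phi) = 0$ as well, by Theorem \ref{cat-gy-thm}(1) and the fact that $\log\rho(\Phi) = 0$ is forced; or alternatively so that Propositions \ref{pol-shift-ent}, \ref{pol-shift-ent-lower} and \ref{pol-shift-prop} are applicable). This is exactly the hypothesis under which the polynomial shifting number and the polynomial entropies are defined and linear, so the proposed reduction goes through without further work.
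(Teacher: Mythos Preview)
Your proposal is correct and follows essentially the same approach as the paper: both arguments assemble the corollary from Theorem \ref{cat-gy-thm} and Proposition \ref{linearity}. The paper treats the two implications separately while you package them as a single equivalence via the common slope, but the content is identical.
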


\begin{proof}
(1)
Suppose $h_t(\Phi) = h_{\sigma,t}(\Phi)$ for all $t \in \mathbb{R}$, in particular, $h_\mathrm{cat}(\Phi) = h_\sigma(\Phi)$.
Then Theorem \ref{cat-gy-thm} (1) shows that $h_\mathrm{cat}(\Phi) = h_\sigma(\Phi) = \log\rho(\Phi)$.

Conversely, suppose $h_\mathrm{cat}(\Phi) = \log\rho(\Phi)$.
By Theorem \ref{cat-gy-thm} (1), we have $\log\rho(\Phi) = h_\sigma(\Phi)$.
Thus the assertion readily follows from Proposition \ref{linearity} (1), (2).

(2)
Similarly, apply Theorem \ref{cat-gy-thm} (2) and Proposition \ref{linearity} (3), (4).
\end{proof}

\subsection{Comparison with translation length}

Consider a triangulated category $\mathcal{D}$ (not necessarily saturated) and a compatible triple $(\Phi,\sigma,g) \in \mathrm{Aut}(\mathcal{D}) \times \mathrm{Stab}_\Gamma^\dagger(\mathcal{D}) \times \widetilde{GL}^+(2,\mathbb{R})$.
In this section, we study the relationship between the mass growth of $\Phi$ and the stable translation length of the action of $\Phi$ on $(\mathrm{Stab}_\Gamma^\dagger(\mathcal{D})/\mathbb{C},\bar{d}_B)$.

We need the following linear algebraic lemma.

\begin{lem}\label{norm-lem}
Let $V$ be a vector space over $\mathbb{R}$ and $S \subset V$ be an $\mathbb{R}$-spanning set of $V$.
For a linear map $A : V \to V$, define
\begin{equation*}
\lVert A \rVert_S \coloneqq \sup_{v \in S} \frac{\lvert Av \rvert}{\lvert v \rvert}.
\end{equation*}
Then $\lVert - \rVert_S$ is a matrix norm.
Moreover, if $A : V \to V$ is a linear isomorphism acting bijectively on $S$, we have
\begin{equation*}
\lVert A^{-1} \rVert_S = \left( \inf_{v \in S} \frac{\lvert Av \rvert}{\lvert v \rvert} \right)^{-1}.
\end{equation*}
\end{lem}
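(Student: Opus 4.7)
The plan is to verify the three norm axioms directly, then derive the second identity by a change of variables. The only subtle point is the finiteness of the supremum, which relies on the implicit finite-dimensionality of $V$ (coming from the intended application $V=\mathbb{C}\cong\mathbb{R}^2$): since any linear map on a finite-dimensional normed space is bounded, $|Av|/|v|$ is bounded above on $V\setminus\{0\}$ by the operator norm of $A$, so $\lVert A\rVert_S$ is finite. There is no submultiplicativity to worry about here, as ``matrix norm'' is used in the loose sense of a norm on $\mathrm{End}(V)$ (Lemma \ref{jordan} shows this is all that is needed, since all norms on a finite-dimensional space are equivalent).

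Next I would check the norm axioms. Non-negativity is immediate. For definiteness, $\lVert A\rVert_S=0$ forces $Av=0$ for every $v\in S$; since $S$ is an $\mathbb{R}$-spanning set, $A$ vanishes on all of $V$, hence $A=0$. Homogeneity $\lVert cA\rVert_S=|c|\lVert A\rVert_S$ is immediate from the homogeneity of $|\cdot|$. The triangle inequality follows from the pointwise estimate
\begin{equation*}
\frac{|(A+B)v|}{|v|}\leq \frac{|Av|}{|v|}+\frac{|Bv|}{|v|}
\end{equation*}
and taking suprema on the right-hand side (which yields $\lVert A\rVert_S+\lVert B\rVert_S$).

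For the second assertion, I would use the bijective action of $A$ on $S$ to perform the change of variables $w\coloneqq A^{-1}v$, which traverses $S$ as $v$ does. Substituting gives
\begin{equation*}
\lVert A^{-1}\rVert_S=\sup_{v\in S}\frac{|A^{-1}v|}{|v|}=\sup_{w\in S}\frac{|w|}{|Aw|}=\Bigl(\inf_{w\in S}\frac{|Aw|}{|w|}\Bigr)^{-1},
\end{equation*}
where the last equality is the elementary identity $\sup 1/f=1/\inf f$ for a positive function $f$; positivity of $f(w)=|Aw|/|w|$ on $S$ (and hence the fact that the infimum is not $0$) is guaranteed because $A$ is a linear isomorphism, so $Aw\neq 0$ for $w\neq 0$, and because in a finite-dimensional space $|Aw|/|w|$ is bounded below by the positive constant $\lVert A^{-1}\rVert_{\mathrm{op}}^{-1}$. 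I do not anticipate a real obstacle; the only thing to be pedantic about is guaranteeing finite-dimensionality so that both $\lVert A\rVert_S<\infty$ and $\inf_{w\in S}|Aw|/|w|>0$.
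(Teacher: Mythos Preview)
Your proposal is correct and follows essentially the same approach as the paper: the paper dismisses the norm axioms as ``standard'' and then proves the second identity by exactly the change of variables you give, using that $A$ acts bijectively on $S$. Your write-up simply spells out the standard verification and the finite-dimensionality caveat that the paper leaves implicit.
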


\begin{proof}
The proof that $\lVert - \rVert_S$ is a matrix norm is standard.
Let us prove the latter assertion.
\begin{equation*}
\lVert A^{-1} \rVert_S = \sup_{v \in S} \frac{\lvert A^{-1}v \rvert}{\lvert v \rvert} = \sup_{v \in S} \frac{\lvert v \rvert}{\lvert Av \rvert} = \left(\left( \sup_{v \in S} \frac{\lvert v \rvert}{\lvert Av \rvert} \right)^{-1}\right)^{-1} = \left( \inf_{v \in S} \frac{\lvert Av \rvert}{\lvert v \rvert} \right)^{-1}
\end{equation*}
where the second equality holds by the assumption that $A$ acts bijectively on $S$.
\end{proof}

We can show the following by generalizing the proof of \cite[Theorem 4.9]{Kik2} (also see \cite[Proposition 4.1]{Woo1}).

\begin{thm}\label{tran-leng-thm}
Let $(\Phi,\sigma,g) \in \mathrm{Aut}(\mathcal{D}) \times \mathrm{Stab}_\Gamma^\dagger(\mathcal{D}) \times \widetilde{GL}^+(2,\mathbb{R})$ be a compatible triple.
Then we have
\begin{equation*}
\tau^s(\Phi) = \log\frac{\rho(M_g)}{\det(M_g)^{1/2}}.
\end{equation*}
In particular, if $Z_\sigma$ has spanning image and $\det(M_g)=1$ then
\begin{equation*}
\tau^s(\Phi) = \log\rho(M_g) = \log\rho(\Phi) = h_\sigma(\Phi).
\end{equation*}
\end{thm}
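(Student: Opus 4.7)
The plan is to use the compatibility relation $\Phi^n\cdot\sigma=\sigma\cdot g^n$ to rewrite
\[
\bar d_B(\bar\sigma,\Phi^n\bar\sigma)=\inf_{\alpha\in\mathbb C}\,d_B(\sigma,\sigma\cdot(g^n\alpha)),
\]
and then reduce the inner distance to a linear-algebraic quantity in $M_g^n$. The above identity uses that the $\mathbb C$-action on $\mathrm{Stab}_\Gamma^\dagger(\mathcal D)$ is by $d_B$-isometries, which is a direct check: $\alpha=x+iy$ shifts every $\sigma$-phase by $-x$ and scales every $\sigma$-mass by $e^{\pi y}$, so all three terms in the definition of $d_B$ are preserved.

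The next step is to compute $d_B(\sigma,\sigma\cdot h)$ for arbitrary $h=(M_h,f_h)$. Since the $(\sigma\cdot h)$-semistable objects coincide with the $\sigma$-semistable ones (the former of phase $f_h^{-1}(\phi)$ and mass $|M_h^{-1}Z_\sigma(E)|$ when the latter has phase $\phi$), the HN filtration of every $E$ is shared, and both suprema in the definition of $d_B$ can be taken over $\sigma$-semistables only. The phase contribution becomes $\sup_\phi|\phi-f_h^{-1}(\phi)|$, which is finite by the relation $f_h(\phi+1)=f_h(\phi)+1$. For the mass contribution, Lemma \ref{norm-lem} applied to $S\coloneqq\{Z_\sigma(E):E\ \sigma\text{-semistable}\}$ (which is $M_g$-invariant because $\Phi$ sends semistables to semistables and $Z_\sigma\circ[\Phi]_\Gamma=M_g\cdot Z_\sigma$) gives $\max\{\log\|M_h\|_S,\log\|M_h^{-1}\|_S\}$.

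Specialising to $h=g^n\alpha$ and using $M_\alpha=e^{-\pi y}R_{\pi x}$, the mass term equals $\max\{-\pi y+\log\|M_g^n\|_S,\,\pi y+\log\|M_g^{-n}\|_S\}$, because right-multiplication by the rotation $R_{\pi x}$ preserves the Euclidean norm on $\mathbb R^2$. Optimising over $y$ yields $\tfrac12\log(\|M_g^n\|_S\cdot\|M_g^{-n}\|_S)$. The real part $x$ can then be chosen to be $-n$ times the Poincar\'e translation number of $f_g$, which cancels the linear drift in $f_{g^n}^{-1}$ and keeps the phase term bounded in $n$. Dividing by $n$, letting $n\to\infty$, and applying Gelfand's formula to $M_g$ and $M_g^{-1}$ produces
\[
\tau^s(\Phi)=\tfrac12\big(\log\rho(M_g)+\log\rho(M_g^{-1})\big)=\log\frac{\rho(M_g)}{\sqrt{\det(M_g)}},
\]
where the last equality uses $\rho(M_g^{-1})=\rho(M_g)/\det(M_g)$ for $M_g\in GL^+(2,\mathbb R)$. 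The ``in particular'' part then follows immediately from Theorem \ref{cat-gy-thm}(1).

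The main obstacle I expect is the max-of-two-terms structure of $d_B$: because the mass and phase parts are combined by a maximum rather than a sum, the minimising $\alpha$ must balance both simultaneously. I must verify that after the linear drift of $f_g^n$ is cancelled by $x$, the residual phase contribution stays $O(1)$ in $n$ (so it disappears after dividing by $n$), while the mass term is optimised independently by $y$. A secondary subtlety is that Gelfand's formula applied to $\|M_g^{\pm n}\|_S$ computes the spectral radius of $M_g^{\pm 1}$ restricted to $\mathrm{span}(S)$; the invariance of this span under $M_g$ (from compatibility) is exactly what makes this restriction harmless in the regimes relevant to the stated formula.
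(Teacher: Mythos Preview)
Your approach is essentially the same as the paper's: both rewrite $\bar d_B(\bar\sigma,\Phi^n\bar\sigma)$ as $\inf_{\alpha\in\mathbb C}d_B(\sigma,\sigma\cdot g^n\alpha)$, observe that $\sigma$ and $\sigma\cdot g^n\alpha$ share the same semistable objects and Harder--Narasimhan filtrations so that the supremum in $d_B$ reduces to one over $\sigma$-semistables, and then separate into a phase contribution (controlled via the translation behaviour of $f_g^n$, made $O(1)$ by a suitable choice of $\Re\alpha$) and a mass contribution (reduced to operator-norm growth of $M_g^{\pm n}$, optimised over $\Im\alpha$).

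The one structural difference is that the paper splits the argument into an upper bound (its part (1)) and a lower bound (parts (2)--(3)). For the upper bound it bounds the mass term using the \emph{standard} operator norm on $\mathbb R^2$, so that Gelfand's formula yields $\rho(M_g^{\pm1})$ unconditionally; only for the lower bound does it pass to the seminorm $\|\cdot\|_S$ built from $S=\{Z_\sigma(E):E\text{ $\sigma$-semistable}\}$. You instead use $\|\cdot\|_S$ throughout, which is tidier (the optimal $\Im\alpha$ gives $\tfrac12\log(\|M_g^n\|_S\|M_g^{-n}\|_S)$ in one stroke, serving simultaneously as lower bound and, together with the $O(1)$ phase term, as upper bound), but it means that your final identification $\lim_n\tfrac1n\log\|M_g^{\pm n}\|_S=\log\rho(M_g^{\pm1})$ via Gelfand requires $\mathrm{span}_{\mathbb R}(S)=\mathbb R^2$. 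The paper treats the non-spanning case separately in its part (3); your proposal flags this as a ``secondary subtlety'' but does not actually resolve it, so as written your argument establishes the full statement only under the spanning-image hypothesis (which, to be fair, is the hypothesis in the ``in particular'' clause and in all the applications).
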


\begin{proof}
(1)
Let us first show that $\tau^s(\Phi) \leq \log\frac{\rho(M_g)}{\det(M_g)^{1/2}}$.
Since $\sigma$ and $\sigma \cdot g$ have the same sets of semistable objects and the same Harder--Narasimhan filtrations, for any $n \in \mathbb{N}$ and $\alpha \in \mathbb{C}$, we have
\begin{align*}
d_B(\sigma,\Phi^n \cdot \sigma \cdot \alpha)
&= d_B(\sigma,\sigma \cdot (g^n\alpha))\\
&= \sup_{0 \neq E \in \mathcal{D}} \left\{ \lvert \phi_{\sigma \cdot (g^n\alpha)}^\pm(E) - \phi_\sigma^\pm(E) \rvert,\left\lvert\log\frac{m_{\sigma \cdot (g^n\alpha)}(E)}{m_\sigma(E)}\right\rvert \right\}\\
&= \sup_{E:\, \sigma\text{-s.s.}} \left\{ \lvert \phi_{\sigma \cdot (g^n\alpha)}(E) - \phi_\sigma(E) \rvert,\left\lvert\log\frac{\lvert Z_{\sigma \cdot (g^n\alpha)}(E) \rvert}{\lvert Z_\sigma(E) \rvert}\right\rvert \right\}\\
&= \sup_{E:\, \sigma\text{-s.s.}} \left\{ \lvert f_{g^n\alpha}(\phi_\sigma(E)) - \phi_\sigma(E) \rvert,\left\lvert\log\frac{\lvert M_{g^n\alpha}^{-1}Z_\sigma(E) \rvert}{\lvert Z_\sigma(E) \rvert}\right\rvert \right\}\\
&= \max \left\{ \sup_{E:\, \sigma\text{-s.s.}} \lvert f_{g^n\alpha}(\phi_\sigma(E)) - \phi_\sigma(E) \rvert,\sup_{E:\, \sigma\text{-s.s.}} \left\lvert\log\frac{\lvert M_{g^n\alpha}^{-1}Z_\sigma(E) \rvert}{\lvert Z_\sigma(E) \rvert}\right\rvert \right\}\\
&\leq \max \left\{ \sup_{\phi \in \mathbb{R}} \lvert f_{g^n\alpha}(\phi) - \phi \rvert,\sup_{v \in \mathbb{R}^2,\lVert v \rVert = 1} \left\lvert\log\lvert M_{g^n\alpha}^{-1}v \rvert\right\rvert \right\}\\
&= \max \{A_{n,\alpha},B_{n,\alpha}\}
\end{align*}
where
\begin{gather*}
A_{n,\alpha} \coloneqq \sup_{\phi \in \mathbb{R}} \lvert f_{g^n\alpha}(\phi) - \phi \rvert,\\
B_{n,\alpha} \coloneqq \sup_{v \in \mathbb{R}^2,\lVert v \rVert = 1} \left\lvert\log\lvert M_{g^n\alpha}^{-1}v \rvert\right\rvert.
\end{gather*}

\begin{itemize}
\item
Since $f_g^n - \mathrm{id} : \mathbb{R} \to \mathbb{R}$ is a 1-periodic function,
\begin{equation*}
A_{n,\alpha} = \sup_{\phi \in \mathbb{R}} \lvert f_g^n(\phi) - \phi + \Re\alpha \rvert = \max_{\phi \in [0,1]} \lvert f_g^n(\phi) - \phi + \Re\alpha \rvert.
\end{equation*}
Thus, for every $\alpha \in \mathbb{C}$ such that $\Re\alpha = -f_g^n(0)$,
\begin{equation}\label{estimate-a}
A_{n,\alpha} = \max_{\phi \in [0,1]} \lvert f_g^n(\phi) - \phi -f_g^n(0) \rvert < 1.
\end{equation}
\item
Let $M_g' \coloneqq \frac{1}{\det(M_g)^{1/2}} M_g \in SL(2,\mathbb{R})$ and $\lVert - \rVert$ be the matrix norm induced by the standard vector norm on $\mathbb{R}^2$.
Then
\begin{align*}
B_{n,\alpha}
&= \sup_{v \in \mathbb{R}^2,\lVert v \rVert = 1} \left\{\pm\log \lvert M_{g^n\alpha}^{-1}v \rvert \right\}\\
&= \max \left\{\log\sup_{v \in \mathbb{R}^2,\lVert v \rVert = 1} \lvert M_{g^n\alpha}^{-1}v \rvert,\log\left(\inf_{v \in \mathbb{R}^2,\lVert v \rVert = 1} \lvert M_{g^n\alpha}^{-1}v \rvert\right)^{-1}\right\}\\
&= \max \{ \log\lVert M_{g^n\alpha}^{-1} \rVert,\log\lVert M_{g^n\alpha} \rVert\}\\
&= \max \{ \log\lVert M_g^{-n} \rVert - \pi\Im\alpha,\log\lVert M_g^n \rVert + \pi\Im\alpha\}\\
&= \max \{ \log\lVert M_g'^{\pm n} \rVert \pm \log\det(M_g)^{n/2} \pm \pi\Im\alpha\}.
\end{align*}
Therefore, for every $\alpha \in \mathbb{C}$ such that $\Im\alpha = -\frac{1}{\pi}\log\det(M_g)^{n/2}$,
\begin{equation}\label{estimate-b}
B_{n,\alpha} = \max\{ \log\lVert M_g'^{-n} \rVert,\log\lVert M_g'^n \rVert \}.
\end{equation}
\end{itemize}

By the above estimates, we obtain
\begin{align*}
\frac{\bar{d}_B(\bar{\sigma},\Phi^n \cdot \bar{\sigma})}{n}
&= \frac{1}{n} \inf_{\alpha \in \mathbb{C}} d_B(\sigma,\Phi^n \cdot \sigma \cdot \alpha)\\
&\leq \frac{1}{n} \inf_{\alpha \in \mathbb{C}} \max\{A_{n,\alpha},B_{n,\alpha}\}\\
&\leq \frac{1}{n} \max\{ 1,\log\lVert M_g'^{-n} \rVert,\log\lVert M_g'^n \rVert \}\\
&= \max\left\{ \frac{1}{n},\log\lVert M_g'^{-n} \rVert^{1/n},\log\lVert M_g'^n \rVert^{1/n} \right\}
\end{align*}
where the third inequality follows from the inequalities \eqref{estimate-a} and \eqref{estimate-b} for $\alpha = -f_g^n(0)-\frac{i}{\pi}\log\det(M_g)^{n/2}$.
Since $\lVert - \rVert$ is a matrix norm, for every $\varepsilon>0$, there exists $N \in \mathbb{N}$ such that
\begin{equation*}
\rho(M_g')-\varepsilon < \lVert M_g'^{\pm n} \rVert^{1/n} < \rho(M_g')+\varepsilon
\end{equation*}
for every $n>N$.
Note that $\rho(M_g') = \rho(M_g'^{-1}) \geq 1$.
Therefore, if we take $N$ sufficiently large,
\begin{equation*}
\frac{\bar{d}_B(\bar{\sigma},\Phi^n \cdot \bar{\sigma})}{n} < \log(\rho(M_g')+\varepsilon) = \log\left(\frac{\rho(M_g)}{\det(M_g)^{1/2}} + \varepsilon\right)
\end{equation*}
for every $n>N$.
Since $\varepsilon>0$ is arbitrary, it follows that
\begin{equation*}
\tau^s(\Phi) = \lim_{n \to \infty} \frac{\bar{d}_B(\bar{\sigma},\Phi^n \cdot \bar{\sigma})}{n} \leq \log\frac{\rho(M_g)}{\det(M_g)^{1/2}}.
\end{equation*}

(2)
Let us next show that $\tau^s(\Phi) \geq \log\frac{\rho(M_g)}{\det(M_g)^{1/2}}$ assuming that $Z_\sigma$ has spanning image.
As in (1), for any $n \in \mathbb{N}$ and $\alpha \in \mathbb{C}$, we have
\begin{equation*}
d_B(\sigma,\Phi^n \cdot \sigma \cdot \alpha) = \max \{A_{n,\alpha}',B_{n,\alpha}'\}
\end{equation*}
where
\begin{gather*}
A_{n,\alpha}' \coloneqq \sup_{E:\, \sigma\text{-s.s.}} \lvert f_{g^n\alpha}(\phi_\sigma(E)) - \phi_\sigma(E) \rvert,\\
B_{n,\alpha}' \coloneqq \sup_{E:\, \sigma\text{-s.s.}} \left\lvert\log\frac{\lvert M_{g^n\alpha}^{-1}Z_\sigma(E) \rvert}{\lvert Z_\sigma(E) \rvert}\right\rvert.
\end{gather*}

As in (1), we obtain the following.

\begin{itemize}
\item
Again let $M_g' \coloneqq \frac{1}{\det(M_g)^{1/2}} M_g \in SL(2,\mathbb{R})$ and $\lVert - \rVert_S$ be the matrix norm determined by the set $S \coloneqq \{Z_\sigma(E) \,|\, E \text{ is } \sigma \text{-semistable} \}$ by Lemma \ref{norm-lem}.
Then, as before,
\begin{equation}\label{estimate-b'}
B_{n,\alpha}' = \max \{ \log\lVert M_g'^{\pm n} \rVert_S \pm \log\det(M_g)^{n/2} \pm \pi\Im\alpha\}.
\end{equation}
\end{itemize}

Now let us prove the claim.
If $\rho(M_g')=1$ then
\begin{equation*}
0 \leq \lim_{n \to \infty} \frac{\bar{d}_B(\bar{\sigma},\Phi^n \cdot \bar{\sigma})}{n} \leq \log\frac{\rho(M_g)}{\det(M_g)^{1/2}} = \log\rho(M_g') = 0.
\end{equation*}
Assume that $\rho(M_g')>1$.
Then, using the above estimates,
\begin{align*}
\frac{\bar{d}_B(\bar{\sigma},\Phi^n \cdot \bar{\sigma})}{n}
&= \frac{1}{n} \inf_{\alpha \in \mathbb{C}} d_B(\sigma,\Phi^n \cdot \sigma \cdot \alpha)\\
&= \frac{1}{n} \inf_{\alpha \in \mathbb{C}} \max\{A_{n,\alpha}',B_{n,\alpha}'\}\\
&\geq \frac{1}{n} \inf_{\alpha \in \mathbb{C}} B_{n,\alpha}'\\
&= \frac{1}{n} \inf_{\alpha \in \mathbb{C}} \max \{ \log\lVert M_g'^{\pm n} \rVert_S \pm \log\det(M_g)^{n/2} \pm \pi\Im\alpha\}\\
&= \inf_{\alpha \in \mathbb{C}} \max \left\{ \log\lVert M_g'^{\pm n} \rVert_S^{1/n} \pm \log\det(M_g)^{1/2} \pm \frac{\pi\Im\alpha}{n}\right\}.
\end{align*}
For any $\rho(M_g')-1>\varepsilon>0$, there exists $N \in \mathbb{N}$ such that
\begin{equation*}
\rho(M_g')-\varepsilon < \lVert M_g'^{\pm n} \rVert_S^{1/n} < \rho(M_g')+\varepsilon
\end{equation*}
for every $n>N$.
Hence we get
\begin{align*}
\frac{\bar{d}_B(\bar{\sigma},\Phi^n \cdot \bar{\sigma})}{n}
&> \inf_{\alpha \in \mathbb{C}}\max \left\{ \log(\rho(M_g')-\varepsilon) \pm \log\det(M_g)^{1/2} \pm \frac{\pi\Im\alpha}{n} \right\}\\
&= \log(\rho(M_g')-\varepsilon)\\
&= \log\left(\frac{\rho(M_g)}{\det(M_g)^{1/2}} - \varepsilon\right).
\end{align*}
Therefore
\begin{equation*}
\tau^s(\Phi) = \lim_{n \to \infty} \frac{\bar{d}_B(\bar{\sigma},\Phi^n \cdot \bar{\sigma})}{n} \geq \log\frac{\rho(M_g)}{\det(M_g)^{1/2}}
\end{equation*}
as $\varepsilon>0$ is arbitrary.

(3)
Finally, let us show that $\tau^s(\Phi) \geq \log\frac{\rho(M_g)}{\det(M_g)^{1/2}}$ assuming that $Z_\sigma$ does not have spanning image.
In this case, the relation $Z_\sigma \circ [\Phi]_\Gamma = M_g \cdot Z_\sigma$ forces
\begin{equation*}
M_g'Z_\sigma(E) = Z_\sigma(\Phi E) = \lambda Z_\sigma(E)
\end{equation*}
for every $\sigma$-semistable object $E$ and some $\lambda \in \mathbb{R}$ which necessarily is an eigenvalue of $M_g'$.
This then implies that $\lVert M_g'^{\pm n} \rVert_S = \lvert\lambda\rvert^{\pm n}$ and therefore
\begin{align*}
\frac{\bar{d}_B(\bar{\sigma},\Phi^n \cdot \bar{\sigma})}{n}
&\geq \inf_{\alpha \in \mathbb{C}} \max \left\{ \log\lVert M_g'^{\pm n} \rVert_S^{1/n} \pm \log\det(M_g)^{1/2} \pm \frac{\pi\Im\alpha}{n}\right\}\\
&= \inf_{\alpha \in \mathbb{C}} \max \left\{ \log\lvert \lambda \rvert^{\pm 1} \pm \log\det(M_g)^{1/2} \pm \frac{\pi\Im\alpha}{n}\right\}\\
&= \max\{\log\lvert \lambda \rvert^{\pm 1}\}\\
&= \log\rho(M_g')\\
&= \log\frac{\rho(M_g)}{\det(M_g)^{1/2}}.
\end{align*}
Here the fourth equality holds since $\lambda,\lambda^{-1}$ are the eigenvalues of $M_g' \in SL(2,\mathbb{R})$.
\end{proof}

\section{Remarks on compatible triples}

\subsection{Criterion for compatible triples}

In this section, we give a criterion for a triple $(\Phi,\sigma,g) \in \mathrm{Aut}(\mathcal{D}) \times \mathrm{Stab}_\Gamma^\dagger(\mathcal{D}) \times \widetilde{GL}^+(2,\mathbb{R})$ to be compatible, i.e., $\Phi \cdot \sigma = \sigma \cdot g$.

First of all, note that the compatibility implies the following:
\begin{enumerate}
\item $Z_\sigma \circ [\Phi]_\Gamma = M_g \cdot Z_\sigma$.\label{condition-1}
\item $\Phi(\mathcal{P}_\sigma(\phi)) = \mathcal{P}_\sigma(f_g(\phi))$ for any $\phi \in \mathbb{R}$.\label{condition-2}
\end{enumerate}
The condition \eqref{condition-1} is easier to check in practice and one might wonder whether the condition \eqref{condition-1} implies the condition \eqref{condition-2}.
This is not true in general as the following example shows.

\begin{ex}\label{ginz-ex}
For an odd positive integer $d \neq 1$, let $\Gamma_2^d$ be the $d$-Calabi--Yau Ginzburg dg algebra associated to the $A_2$ quiver $1 \leftarrow 2$ \cite{Gin}.
Let $D_{fd}(\Gamma_2^d)$ be the derived category of dg $\Gamma_2^d$-modules with finite dimensional cohomology and $S_1,S_2$ be the simple modules corresponding to two vertices.
It is known that they are $d$-spherical and satisfy $\mathrm{Hom}_{D_{fd}(\Gamma_2^d)}^\bullet(S_1,S_2) = \mathbb{C}[-1]$ \cite[Lemma 2.15]{KY}.
Denote by $T_{S_1}$ the spherical twist along $S_1$ \cite{ST}.
It sends an object $E$ to $\mathrm{Cone}(\mathrm{Hom}_{D_{fd}(\Gamma_2^d)}^\bullet(S_1,E) \otimes S_1 \overset{\mathrm{ev}}{\to} E)$.

Let $\mathcal{H}$ be the standard algebraic heart of $D_{fd}(\Gamma_2^d)$.
To give a stability condition $\sigma$ with heart $\mathcal{H}$, it is enough to choose $z_i \in \mathbb{H} \coloneqq \{me^{i\pi\phi} \,|\, m \in \mathbb{R}_{>0},\phi \in [0,1)\}$ and put $Z_\sigma(S_i) \coloneqq z_i$ ($i=1,2$).
Let us choose $z_i$'s so that $\arg z_2 > \arg z_1$.
We have
\begin{equation*}
Z_\sigma(T_{S_1}S_1) = Z_\sigma(S_1[1-d]) = (-1)^{1-d}z_1 = z_1,
\end{equation*}
and since $T_{S_1}S_2$ sits in the exact triangle $S_1[-1] \to S_2 \to T_{S_1}S_2 \to S_1$,
\begin{equation*}
Z_\sigma(T_{S_1}S_2) = Z_\sigma(S_1) + Z_\sigma(S_2) = z_1 + z_2.
\end{equation*}
Hence, if we call $M : \mathbb{R}^2 \to \mathbb{R}^2$ the linear isomorphism defined by sending $z_1 \mapsto z_1$ and $z_2 \mapsto z_1+z_2$, we get
\begin{equation*}
Z_\sigma \circ [T_{S_1}] = M \cdot Z_\sigma.
\end{equation*}
As $M \in GL^+(2,\mathbb{R})$, we see that the condition \eqref{condition-1} is satisfied for $T_{S_1}$.

If $T_{S_1}$ further satisfies the condition \eqref{condition-2}, it must send a semistable object to a semistable object.
In particular, $T_{S_1}S_2$ should be semistable.
However, since $T_{S_1}S_2$ sits in the short exact sequence
\begin{equation}\label{ses}
0 \to S_2 \to T_{S_1}S_2 \to S_1 \to 0
\end{equation}
in $\mathcal{H} = \mathcal{P}_\sigma(0,1]$, we should have $\arg z_2 = \arg Z_\sigma(S_2) \leq \arg Z_\sigma(T_{S_1}S_2) \leq \arg Z_\sigma(S_1) = z_1$ which is not the case because of how we chose $z_i$'s.
\end{ex}

\begin{rmk}\label{ginz-rmk}
Actually, the above example shows that there are no $\sigma \in \mathrm{Stab}^\dagger(D_{fd}(\Gamma_2^d))$ with heart $\mathcal{H}$ and $g \in \widetilde{GL}^+(2,\mathbb{R})$ such that $(T_{S_1},\sigma,g)$ is compatible.
Indeed, notice that the compatibility implies that $T_{S_1}$ sends $\mathcal{P}_\sigma(0,1]$ to $\mathcal{P}_\sigma(\psi,\psi+1]$ where $\psi \coloneqq f_g(0)$.
However we have $T_{S_1}S_1 = S_1[1-d] \in \mathcal{P}_\sigma(1-d,2-d]$ whereas $T_{S_1}S_2 \in \mathcal{P}_\sigma(0,1]$ by the short exact sequence \eqref{ses}.
\end{rmk}

The problem in Example \ref{ginz-ex}, as we noticed in Remark \ref{ginz-rmk}, is that $T_{S_1}$ does not send $\mathcal{P}_\sigma(0,1]$ to $\mathcal{P}_\sigma(\psi,\psi+1]$, a property which is satisfied by any autoequivalence satisfying $\Phi \cdot \sigma = \sigma \cdot g$ for some $g \in \widetilde{GL}^+(2,\mathbb{R})$.

The following lemma shows that, once we have checked the condition \eqref{condition-1}, the condition $\Phi(\mathcal{P}_\sigma(0,1]) = \mathcal{P}_\sigma(\psi,\psi+1]$ for some $\psi \in \mathbb{R}$ is the only thing we need to check to conclude that $\Phi \cdot \sigma = \sigma \cdot g$ for some $g \in \widetilde{GL}^+(2,\mathbb{R})$.

\begin{lem}\label{criterion}
Let $(\Phi,\sigma,M) \in \mathrm{Aut}(\mathcal{D}) \times \mathrm{Stab}_\Gamma^\dagger(\mathcal{D}) \times GL^+(2,\mathbb{R})$.
Assume that $Z_\sigma \circ [\Phi]_\Gamma = M \cdot Z_\sigma$ and that $\Phi(\mathcal{P}_\sigma(0,1]) = \mathcal{P}_\sigma(\psi,\psi+1]$ for some $\psi \in \mathbb{R}$.
Then $\Phi(\mathcal{P}_\sigma(\phi)) = \mathcal{P}_\sigma(f(\phi))$ where $f : \mathbb{R} \to \mathbb{R}$ is the lift of the orientation preserving map $\bar{M} : S^1 \to S^1$ induced by $M$ such that $f(0) = \psi$.
\end{lem}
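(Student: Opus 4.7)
The plan is to first prove the inclusion $\Phi(\mathcal{P}_\sigma(\phi)) \subseteq \mathcal{P}_\sigma(f(\phi))$ for $\phi \in (0,1]$, extend it to every $\phi \in \mathbb{R}$ by the shift relation $\mathcal{P}_\sigma(\phi+n) = \mathcal{P}_\sigma(\phi)[n]$ (together with $\Phi \circ [n] = [n]\circ \Phi$ and $f(\phi+n) = f(\phi)+n$), and then deduce the reverse inclusion from the uniqueness of Harder--Narasimhan (HN) filtrations in $\mathcal{D}$.

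Fix $\phi \in (0,1]$ and $0 \neq E \in \mathcal{P}_\sigma(\phi) \subset \mathcal{P}_\sigma(0,1]$. By hypothesis $\Phi E$ lies in the heart $\mathcal{P}_\sigma(\psi,\psi+1]$, so it admits a HN filtration there, $0 = F_0 \subset F_1 \subset \cdots \subset F_k = \Phi E$, with factors $G_i := F_i/F_{i-1} \in \mathcal{P}_\sigma(\psi_i)$ and $\psi+1 \geq \psi_1 > \cdots > \psi_k > \psi$. Because $\Phi$ restricts to an exact equivalence of abelian categories $\mathcal{P}_\sigma(0,1] \to \mathcal{P}_\sigma(\psi,\psi+1]$, applying $\Phi^{-1}$ produces a filtration of $E$ in $\mathcal{P}_\sigma(0,1]$ with factors $\Phi^{-1}G_i$. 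The central-charge identity yields
\[
Z_\sigma(\Phi^{-1}G_i) \;=\; M^{-1} Z_\sigma(G_i) \;\in\; \mathbb{R}_{>0}\, e^{i\pi f^{-1}(\psi_i)},
\]
where $f^{-1}$ is the lift of $\bar{M}^{-1}$ normalised by $f^{-1}(\psi)=0$; hence each $\Phi^{-1}G_i$ has phase $f^{-1}(\psi_i) \in (0,1]$ inside the heart. Semistability of $E$ applied to the subobject $\Phi^{-1}F_1 \hookrightarrow E$ and to the quotient $E \twoheadrightarrow \Phi^{-1}G_k$ gives $f^{-1}(\psi_1) \leq \phi$ and $f^{-1}(\psi_k) \geq \phi$; since $f$ is strictly increasing this becomes $\psi_1 \leq f(\phi) \leq \psi_k$, and together with $\psi_1 \geq \psi_k$ this forces $k = 1$ and $\psi_1 = f(\phi)$, so $\Phi E \in \mathcal{P}_\sigma(f(\phi))$.

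Once this inclusion is known for every $\phi \in \mathbb{R}$ (by the shift extension), the reverse inclusion follows by looking at the HN filtration of $\Phi^{-1}F$ in $\mathcal{D}$ for any $0 \neq F \in \mathcal{P}_\sigma(f(\phi))$: if its factors live in $\mathcal{P}_\sigma(\alpha_i)$ with $\alpha_1 > \cdots > \alpha_k$, applying $\Phi$ and the forward inclusion produces a filtration of $F$ whose factors lie in $\mathcal{P}_\sigma(f(\alpha_i))$ with $f(\alpha_1) > \cdots > f(\alpha_k)$, and uniqueness of HN together with the semistability of $F$ forces $k=1$ and $f(\alpha_1) = f(\phi)$, i.e.\ $\Phi^{-1}F \in \mathcal{P}_\sigma(\phi)$. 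The main subtlety throughout is the boundary bookkeeping on the half-open intervals: the hypothesis $\Phi(\mathcal{P}_\sigma(0,1]) = \mathcal{P}_\sigma(\psi,\psi+1]$, which is precisely what distinguishes this lemma from the pathology of Example \ref{ginz-ex}, is what forces the orientation-preserving lift $f$ of $\bar{M}$ with $f(0)=\psi$ to restrict to an order-preserving bijection $(0,1] \to (\psi,\psi+1]$, ensuring that the two semistability inequalities really collapse into the single equality $\psi_1 = \psi_k = f(\phi)$.
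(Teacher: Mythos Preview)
Your proof is correct and follows essentially the same strategy as the paper's: pull subobjects of $\Phi E$ back through $\Phi^{-1}$, read off their phases via $Z_\sigma\circ[\Phi^{-1}]_\Gamma = M^{-1}\cdot Z_\sigma$, and invoke the semistability of $E$ in the heart $\mathcal{P}_\sigma(0,1]$. The only cosmetic difference is that the paper tests semistability of $\Phi E$ against an arbitrary subobject, while you package the same test inside the HN filtration of $\Phi E$ and force it to have length one; for the reverse inclusion the paper simply says ``a similar argument'' (i.e.\ run the same subobject test for $\Phi^{-1}$), whereas you deduce it from the forward inclusion plus uniqueness of HN filtrations---both are valid and amount to the same thing.
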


\begin{proof}
Let $F \in \mathcal{P}_\sigma(\phi)$ be a semistable object for some $\phi \in (0,1]$ and let $E \hookrightarrow \Phi F$ be an injection in $\mathcal{P}_\sigma(\psi,\psi+1]$ with $\frac{1}{\pi}\arg Z_\sigma(E) \eqqcolon \eta$.
Then $\Phi^{-1}E \hookrightarrow F$ is an injection in $\mathcal{P}_\sigma(0,1]$ and $\frac{1}{\pi}\arg Z_\sigma(\Phi^{-1}E) = \bar{M}^{-1}\eta$.
As $F$ is semistable, we get $\bar{M}^{-1}\eta \leq \phi$ and therefore $\eta \leq \bar{M}\phi = \frac{1}{\pi}\arg Z_\sigma(\Phi F)$ proving that $\Phi F$ is semistable of phase $f(\phi)$ (a priori the phase is $\bar{M}\phi+k$ for some $k \in \mathbb{Z}$ but we know that $\Phi F \in \mathcal{P}_\sigma(\psi,\psi+1]$).
Hence we get $\Phi(\mathcal{P}_\sigma(\phi)) \subset \mathcal{P}_\sigma(f(\phi))$.

A similar argument proves the other containment.
\end{proof}

\subsection{Restriction for compatible triples}

We first remark that some special classes of compatible triples $(\Phi,\sigma,g) \in \mathrm{Aut}(\mathcal{D}) \times \mathrm{Stab}_\Gamma^\dagger(\mathcal{D}) \times \widetilde{GL}^+(2,\mathbb{R})$ have been already studied by several authors.

\begin{dfn}[{\cite[Definition 4.1]{DHKK},\cite[Definition 4.6]{Kik2}}]
An autoequivalence $\Phi$ of a triangulated category $\mathcal{D}$ is called {\em pseudo-Anosov} if there exist $\sigma \in \mathrm{Stab}_\Gamma(\mathcal{D})$, $\lambda \in \mathbb{R}$ with $\lvert\lambda\rvert > 1$ and $g = \left(\begin{pmatrix} \lambda^\pm&0\\0&\lambda^\mp \end{pmatrix},f\right) \in \widetilde{GL}^+(2,\mathbb{R})$ such that
\begin{equation*}
\Phi \cdot \sigma = \sigma \cdot g.
\end{equation*}
\end{dfn}

\begin{dfn}[{\cite[Definition 1.1]{Tod}}]
A stability condition $\sigma$ on a triangulated category $\mathcal{D}$ is called of {\em Gepner type} with respect to $(\Phi,\alpha) \in \mathrm{Aut}(\mathcal{D}) \times \mathbb{C}$ if they satisfy
\begin{equation*}
\Phi \cdot \sigma = \sigma \cdot \alpha.
\end{equation*}
\end{dfn}

In the rest of this section, we only consider a numerically finite triangulated category $\mathcal{D}$.
In this case, we will see that sometimes there is a restriction for a triple $(\Phi,\sigma,g) \in \mathrm{Aut}(\mathcal{D}) \times \mathrm{Stab}_\mathcal{N}^\dagger(\mathcal{D}) \times \widetilde{GL}^+(2,\mathbb{R})$ to be compatible.

Let us recall the notion of the categorical volume introduced by Fan--Kanazawa--Yau \cite{FKY} as a categorical analogue of the holomorphic volume.

\begin{dfn}[{\cite[Section 3]{FKY},\cite[Definition 2.11]{Fan3}}]
Let $\mathcal{D}$ be a numerically finite triangulated category.
Fix a basis $\{v_1,\dots,v_k\}$ of $\mathcal{N}(\mathcal{D})$ and let $\chi^{ij}$ be the $(i,j)$-component of the inverse matrix of the matrix $\{\chi(v_i,v_j)\}_{i,j=1}^k$.
Then the {\em categorical volume} of $\sigma = (Z_\sigma,\mathcal{P}_\sigma) \in \mathrm{Stab}_\mathcal{N}(\mathcal{D})$ is defined by
\begin{equation*}
\mathrm{vol}(\sigma) \coloneqq \left| \sum_{i,j=1}^k \chi^{ij} Z_\sigma(v_i)\overline{Z_\sigma(v_j)} \right|.
\end{equation*}
\end{dfn}

\begin{rmk}
The categorical volume does not depend on the choice of a basis $\{v_1,\dots,v_k\}$ of $\mathcal{N}(\mathcal{D})$.
Also note that the categorical volume can be zero \cite[Remark 2.14]{Fan3}.
\end{rmk}

The following lemma describes how the categorical volume changes under the action of $\widetilde{GL}^+(2,\mathbb{R})$ for the odd dimensional Calabi--Yau case.

\begin{lem}\label{vol-lem}
Let $\mathcal{D}$ be a numerically finite $d$-Calabi--Yau triangulated category where $d$ is an odd integer.
Let $\sigma \in \mathrm{Stab}_\mathcal{N}(\mathcal{D})$ and $g \in \widetilde{GL}^+(2,\mathbb{R})$.
Then we have
\begin{equation*}
\mathrm{vol}(\sigma \cdot g) = \det(M_g^{-1})\mathrm{vol}(\sigma).
\end{equation*}
\end{lem}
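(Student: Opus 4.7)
The plan is to work directly with the sesquilinear quantity
\[
Q(\sigma) \coloneqq \sum_{i,j=1}^k \chi^{ij} Z_\sigma(v_i)\overline{Z_\sigma(v_j)},
\]
so that $\mathrm{vol}(\sigma) = |Q(\sigma)|$, and compute how $Q$ transforms under the $\widetilde{GL}^+(2,\mathbb{R})$-action. The key structural input is that for odd $d$, the Euler form satisfies $\chi(E,F) = -\chi(F,E)$ (this comes from Serre duality $\mathrm{Hom}(E,F[k]) \cong \mathrm{Hom}(F,E[d-k])^*$ combined with $d$ odd), and hence the inverse matrix $\{\chi^{ij}\}$ is skew-symmetric as well.

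Next I would unpack the action. Writing $M \coloneqq M_g^{-1} \in GL^+(2,\mathbb{R})$ and thinking of it as an $\mathbb{R}$-linear map $\mathbb{C} \to \mathbb{C}$ via the identification $\mathbb{C} \cong \mathbb{R}^2$, a direct calculation shows that there exist unique $\alpha,\beta \in \mathbb{C}$ (depending on $M$) such that
\[
Mz = \alpha z + \beta \bar z \quad \text{for all } z \in \mathbb{C},
\]
and moreover $|\alpha|^2 - |\beta|^2 = \det M$. Explicitly, if $M = \begin{pmatrix} p & q \\ r & s \end{pmatrix}$ then $\alpha = \tfrac{1}{2}((p+s)+i(r-q))$ and $\beta = \tfrac{1}{2}((p-s)+i(r+q))$, whence $|\alpha|^2 - |\beta|^2 = ps-qr = \det M$.

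Since $Z_{\sigma \cdot g}(v) = M \cdot Z_\sigma(v) = \alpha Z_\sigma(v) + \beta \overline{Z_\sigma(v)}$ by the definition of the right action, expanding $Q(\sigma \cdot g)$ produces four kinds of terms with coefficients $|\alpha|^2$, $|\beta|^2$, $\alpha\bar\beta$ and $\bar\alpha\beta$. The cross-term coefficients multiply sums of the shape $\sum_{i,j}\chi^{ij}Z_\sigma(v_i)Z_\sigma(v_j)$ and $\sum_{i,j}\chi^{ij}\overline{Z_\sigma(v_i)}\,\overline{Z_\sigma(v_j)}$, both of which vanish because $\chi^{ij}$ is skew-symmetric in $(i,j)$ while the quadratic expressions are symmetric. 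For the $|\beta|^2$-term, a relabelling $i\leftrightarrow j$ combined with $\chi^{ji}=-\chi^{ij}$ converts $\sum \chi^{ij}\overline{Z_\sigma(v_i)}Z_\sigma(v_j)$ into $-Q(\sigma)$. Thus
\[
Q(\sigma \cdot g) = \bigl(|\alpha|^2 - |\beta|^2\bigr) Q(\sigma) = \det(M_g^{-1})\, Q(\sigma),
\]
and taking absolute values, using $\det(M_g^{-1})>0$, yields the claim.

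There is no real obstacle beyond bookkeeping; the only place one must pay attention is the skew-symmetry step, which is precisely where the hypothesis that $d$ is odd enters (for even $d$, $\chi^{ij}$ would be symmetric and the cross terms would instead give the dominant contribution, so the statement would fail in that form).
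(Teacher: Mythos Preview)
Your proof is correct and follows essentially the same route as the paper: both decompose the $\mathbb{R}$-linear action of $M_g^{-1}$ on $\mathbb{C}$ as $z\mapsto \alpha z+\beta\bar z$ with $|\alpha|^2-|\beta|^2=\det(M_g^{-1})$, and then use the skew-symmetry $\chi^{ji}=-\chi^{ij}$ (from $d$ odd) to kill the cross terms and flip the sign on the $|\beta|^2$-term. Your write-up is in fact slightly more explicit than the paper's in tracking why the $|\beta|^2$-term contributes $-Q(\sigma)$.
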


\begin{proof}
Since $\chi^{ji}=(-1)^d\chi^{ij}=-\chi^{ij}$, we have $\sum_{i,j=1}^k \chi^{ij}Z_\sigma(v_i)Z_\sigma(v_j) = 0$.
Let us write $M_g^{-1} = \begin{pmatrix} a&b\\c&d \end{pmatrix}$.
Then, for every $v \in \mathcal{N}(\mathcal{D})$,
\begin{equation*}
Z_{\sigma \cdot g}(v) = \alpha Z_\sigma(v) + \beta \overline{Z_\sigma(v)}
\end{equation*}
where $\alpha \coloneqq \frac{a+d+i(-b+c)}{2}$ and $\beta \coloneqq \frac{a-d+i(b+c)}{2}$.
Consequently
\begin{align*}
\mathrm{vol}(\sigma \cdot g)
&= \left| \sum_{i,j=1}^k \chi^{ij} (\alpha Z_\sigma(v_i) + \beta \overline{Z_\sigma(v_i)})(\overline{\alpha} \overline{Z_\sigma(v_j)} + \overline{\beta} Z_\sigma(v_j)) \right|\\
&= \left| (\lvert\alpha\rvert^2-\lvert\beta\rvert^2)\sum_{i,j=1}^k \chi^{ij}Z_\sigma(v_i)\overline{Z_\sigma(v_j)} \right|\\
&= (ad-bc) \left| \sum_{i,j=1}^k \chi^{ij}Z_\sigma(v_i)\overline{Z_\sigma(v_j)} \right|\\
&= \det(M_g^{-1})\mathrm{vol}(\sigma)
\end{align*}
as desired.
\end{proof}

For the odd dimensional Calabi--Yau case, we have the following necessary condition for a compatible triple.

\begin{prop}
Let $\mathcal{D}$ be a numerically finite $d$-Calabi--Yau triangulated category where $d$ is an odd integer.
Let $(\Phi,\sigma,g) \in \mathrm{Aut}(\mathcal{D}) \times \mathrm{Stab}_\mathcal{N}^\dagger(\mathcal{D}) \times \widetilde{GL}^+(2,\mathbb{R})$ be a compatible triple such that $\mathrm{vol}(\sigma)>0$.
Then we have $\det(M_g)=1$.
\end{prop}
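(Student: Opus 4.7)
The plan is to derive the conclusion from two invariance properties of the categorical volume, combined with the hypothesis $\mathrm{vol}(\sigma) > 0$. Namely, I first want to show that $\mathrm{vol}$ is invariant under the left action of $\mathrm{Aut}(\mathcal{D})$, and then combine this with Lemma~\ref{vol-lem} (which describes how $\mathrm{vol}$ transforms under the right $\widetilde{GL}^+(2,\mathbb{R})$-action in the odd Calabi--Yau setting). Assuming both facts, the compatibility $\Phi \cdot \sigma = \sigma \cdot g$ yields
\begin{equation*}
\mathrm{vol}(\sigma) = \mathrm{vol}(\Phi \cdot \sigma) = \mathrm{vol}(\sigma \cdot g) = \det(M_g^{-1}) \mathrm{vol}(\sigma),
\end{equation*}
and cancelling the nonzero factor $\mathrm{vol}(\sigma)$ gives $\det(M_g) = 1$.

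The only real work is the $\mathrm{Aut}(\mathcal{D})$-invariance. To see it, note that any autoequivalence $\Phi$ induces $[\Phi] \in \mathrm{Aut}_\mathbb{Z}(\mathcal{N}(\mathcal{D}))$ which preserves the Euler form, because $\Phi$ induces isomorphisms on all graded Hom-spaces and hence $\chi([\Phi]u, [\Phi]v) = \chi(u, v)$ for all $u,v \in \mathcal{N}(\mathcal{D})$. I will exploit the basis-independence of $\mathrm{vol}$ (the remark after its definition) by computing $\mathrm{vol}(\Phi \cdot \sigma)$ in the transported basis $\{[\Phi]v_1, \ldots, [\Phi]v_k\}$. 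Its Euler matrix agrees with $\{\chi(v_i, v_j)\}$, so the inverse matrix has the same entries $\chi^{ij}$; moreover $Z_{\Phi \cdot \sigma}([\Phi]v_i) = Z_\sigma([\Phi]^{-1}[\Phi]v_i) = Z_\sigma(v_i)$, so
\begin{equation*}
\mathrm{vol}(\Phi \cdot \sigma) = \left| \sum_{i,j=1}^{k} \chi^{ij} Z_\sigma(v_i) \overline{Z_\sigma(v_j)} \right| = \mathrm{vol}(\sigma),
\end{equation*}
as required.

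I do not anticipate any serious obstacle; the proof is a short bookkeeping exercise combining Lemma~\ref{vol-lem} with the elementary fact that autoequivalences preserve the Euler form on $\mathcal{N}(\mathcal{D})$. The Calabi--Yau and odd-dimension hypotheses are used only through Lemma~\ref{vol-lem}, and no additional input is needed.
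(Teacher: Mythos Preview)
Your proposal is correct and follows essentially the same approach as the paper: both combine the $\mathrm{Aut}(\mathcal{D})$-invariance of $\mathrm{vol}$ with Lemma~\ref{vol-lem} and cancel the nonzero factor $\mathrm{vol}(\sigma)$. The only difference is that the paper cites \cite[Lemma 2.12]{Fan3} for the invariance $\mathrm{vol}(\Phi\cdot\sigma)=\mathrm{vol}(\sigma)$, whereas you supply the direct basis-change argument yourself.
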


\begin{proof}
It is easy to see that $\mathrm{vol}(\Phi \cdot \sigma) = \mathrm{vol}(\sigma)$ \cite[Lemma 2.12]{Fan3}.
Therefore, by Lemma \ref{vol-lem}, we get
\begin{equation*}
\mathrm{vol}(\sigma) = \mathrm{vol}(\Phi \cdot \sigma) = \mathrm{vol}(\sigma \cdot g) = \det(M_g^{-1})\mathrm{vol}(\sigma).
\end{equation*}
As $\mathrm{vol}(\sigma)>0$, we conclude that $\det(M_g)=1$.
\end{proof}

\section{Examples}

\subsection{Curves}

Let $X$ be a smooth projective variety and $D^b\mathrm{Coh}(X)$ be the bounded derived category of $X$.
Define the group of {\em standard autoequivalences} of $D^b\mathrm{Coh}(X)$ by
\begin{equation*}
\mathrm{Aut}_\mathrm{std}(D^b\mathrm{Coh}(X)) \coloneqq (\mathrm{Aut}(X) \ltimes \mathrm{Pic}(X)) \times \mathbb{Z}.
\end{equation*}
It is the subgroup of $\mathrm{Aut}(D^b\mathrm{Coh}(X))$ consisting of those autoequivalences of the form $f^*(- \otimes \mathcal{L})[m]$ where $f \in \mathrm{Aut}(X)$, $\mathcal{L} \in \mathrm{Pic}(X)$ and $m \in \mathbb{Z}$.

The Gromov--Yomdin type theorem is known to hold for a smooth projective curve.

\begin{thm}[{\cite[Theorem 3.1 and Proposition 3.3]{Kik1}}]\label{gy-curve}
Let $C$ be a smooth projective curve.
For any $\Phi \in \mathrm{Aut}(D^b\mathrm{Coh}(C))$, we have
\begin{equation*}
h_\mathrm{cat}(\Phi) = \log\rho(\Phi).
\end{equation*}
Furthermore, if $\Phi \in \mathrm{Aut}_\mathrm{std}(D^b\mathrm{Coh}(C))$ then $h_\mathrm{cat}(\Phi)=0$.
\end{thm}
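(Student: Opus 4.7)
The plan is to combine a direct computation of the spectral radius on $\mathcal{N}(D^b\mathrm{Coh}(C)) \cong \mathbb{Z}^2$ with the compatible-triple machinery of Theorem \ref{cat-gy-thm}, treating the three genus regimes separately.

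The second assertion follows once the first is proved: for $\Phi = f^*(- \otimes \mathcal{L})[m]$, the automorphism $f$ of a curve preserves both rank and degree, so $f^*$ acts as the identity on $\mathcal{N}(D^b\mathrm{Coh}(C))$; tensoring by $\mathcal{L}$ sends $(r,d) \mapsto (r, d + r\deg\mathcal{L})$, i.e.\ acts unipotently; and $[m]$ acts by $(-1)^m$. Hence $\rho(\Phi) = 1$, and the first assertion then forces $h_\mathrm{cat}(\Phi) = 0$. For $g \geq 2$, Bondal--Orlov implies $\mathrm{Aut}(D^b\mathrm{Coh}(C)) = \mathrm{Aut}_\mathrm{std}(D^b\mathrm{Coh}(C))$, so this case is already covered.

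For the first assertion, Proposition \ref{yom-cat-ent} gives $h_\mathrm{cat}(\Phi) \geq \log\rho(\Phi)$, so only the reverse inequality is at stake. I would use the classical descriptions of $\mathrm{Stab}_\mathcal{N}(D^b\mathrm{Coh}(C))$ (Okada \cite{Oka} for $\mathbb{P}^1$, Bridgeland \cite{Bri1} for elliptic curves, \cite{Mac} for $g \geq 2$): in each case $\mathrm{Stab}_\mathcal{N}(D^b\mathrm{Coh}(C))$ is a single right $\widetilde{GL}^+(2,\mathbb{R})$-orbit, so $\mathrm{Aut}(D^b\mathrm{Coh}(C))$ acts trivially on the quotient $\mathrm{Stab}_\mathcal{N}/\widetilde{GL}^+(2,\mathbb{R})$ and by Remark \ref{triple-rmk} every $\Phi$ extends to a compatible triple $(\Phi, \sigma, g)$. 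Choosing $\sigma$ geometric with $Z_\sigma(E) = -\deg E + i\,\mathrm{rk}\,E$, the values $Z_\sigma(\mathcal{O}_C) = i$ and $Z_\sigma(\mathcal{O}_x) = -1$ at the $\sigma$-semistable objects $\mathcal{O}_C$ and a skyscraper $\mathcal{O}_x$ span $\mathbb{C}$, so $Z_\sigma$ has spanning image and Theorem \ref{cat-gy-thm}(1) yields $h_\sigma(\Phi) = \log\rho(\Phi)$.

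The remaining step is to upgrade $h_\sigma(\Phi) = \log\rho(\Phi)$ to $h_\mathrm{cat}(\Phi) = \log\rho(\Phi)$. Since $h_{\sigma,0}(\Phi) \leq h_0(\Phi)$ is free from Theorem \ref{mass-grow-thm}(2), the missing input is the upper bound $h_\mathrm{cat}(\Phi) \leq \log\rho(\Phi)$. For $C = \mathbb{P}^1$ this is immediate, since $D^b\mathrm{Coh}(\mathbb{P}^1) \simeq D^b\mathrm{mod}(kQ)$ for the Kronecker quiver $Q$ provides an algebraic stability condition in the connected component of $\sigma$, and the equality clause of Theorem \ref{mass-grow-thm}(2) then forces $h_\mathrm{cat}(\Phi) = h_\sigma(\Phi)$. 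The elliptic case is the hardest and is the step I would expect to consume most of the effort: no algebraic stability condition exists on $D^b\mathrm{Coh}(C)$ for $g = 1$ because every such stability condition is geometric with heart equivalent to the non-finite-length category $\mathrm{Coh}(C)$. I would instead use Mukai's description of $\mathrm{Aut}(D^b\mathrm{Coh}(C))$ as a semidirect product whose essential part is the $SL(2,\mathbb{Z})$-action on $\mathcal{N}(D^b\mathrm{Coh}(C)) \cong \mathbb{Z}^2$, and then bound $\delta_0(G, \Phi^n G)$ directly for the split-generator $G = \mathcal{O}_C \oplus \mathcal{O}_x$ by observing that every line bundle of degree $d$ admits a cone decomposition of length $O(|d|)$ into shifts of $G$, while the ranks and degrees of the summands of $\Phi^n G$ grow like $\rho(\Phi)^n$. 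This complexity bookkeeping is the genuine technical core of the statement.
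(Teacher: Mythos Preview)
The paper does not prove this theorem: it is quoted from Kikuta \cite{Kik1} and then used as a black box in the proof of Proposition~\ref{curve-prop}. So there is no in-paper argument to compare against, and the question is whether your proposal stands on its own.

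Your treatment of the genus $\geq 2$ case has a genuine gap. You correctly reduce via Bondal--Orlov to standard autoequivalences and compute $\rho(\Phi)=1$, but you never supply the upper bound $h_\mathrm{cat}(\Phi)\leq 0$. The compatible-triple machinery only yields $h_\sigma(\Phi)=0$, and the upgrade to $h_\mathrm{cat}$ via an algebraic stability condition is unavailable for exactly the same reason you identify in the elliptic case: every stability condition on a curve of genus $\geq 1$ is in the $\widetilde{GL}^+(2,\mathbb{R})$-orbit of one with heart $\mathrm{Coh}(C)$, which is not of finite length. Declaring this case ``already covered'' is circular, since what is needed is precisely $h_\mathrm{cat}(\Phi)=0$ for standard $\Phi$. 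The fix is short but missing: $\mathrm{Aut}(C)$ is finite for $g\geq 2$, so some power $\Phi^N$ equals $(-\otimes\mathcal{L}')[m']$, and then $h_\mathrm{cat}(\Phi)=\tfrac{1}{N}h_\mathrm{cat}(\Phi^N)=\tfrac{1}{N}h_0(-\otimes\mathcal{L}')=0$ by \cite[Lemma 2.14]{DHKK}.

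A smaller error: the claim that $\mathrm{Stab}_\mathcal{N}(D^b\mathrm{Coh}(\mathbb{P}^1))$ is a single $\widetilde{GL}^+(2,\mathbb{R})$-orbit is false. Okada's $\mathbb{C}^2$ contains algebraic chambers (coming from the Kronecker heart with the ``wrong'' ordering of phases) whose sets of semistable objects differ from those of $\sigma_0$, and $\widetilde{GL}^+(2,\mathbb{R})$ preserves the set of semistables. This does not ultimately break your $\mathbb{P}^1$ argument---since every autoequivalence is standard there, one can verify compatibility with $\sigma_0$ by hand, exactly as the paper does in the proof of Proposition~\ref{curve-prop}---but the reasoning as written is incorrect. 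Note also that the paper itself only asserts the single-orbit property for $g(C)>0$.

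Your elliptic sketch (direct complexity bound via the $SL(2,\mathbb{Z})$ action on $\mathcal{N}$) is on target and is essentially Kikuta's argument.
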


Using Theorem \ref{gy-curve} together with Proposition \ref{linearity} (1) and Corollary \ref{iff-cor} (1), we can show the following.

\begin{prop}\label{curve-prop}
Let $C$ be a smooth projective curve.
For any $\Phi \in \mathrm{Aut}(D^b\mathrm{Coh}(C))$ and $\sigma \in \mathrm{Stab}_\mathcal{N}(D^b\mathrm{Coh}(C))$, we have
\begin{equation*}
h_t(\Phi) = h_{\sigma,t}(\Phi) = \log\rho(\Phi) + \overline{\nu}(\Phi) \cdot t.
\end{equation*}
Furthermore, if $\Phi \in \mathrm{Aut}_\mathrm{std}(D^b\mathrm{Coh}(C))$ then $h_t(\Phi) = h_{\sigma,t}(\Phi) = \overline{\nu}(\Phi) \cdot t$.
\end{prop}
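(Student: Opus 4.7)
The proposition follows by combining Theorem \ref{gy-curve} with Proposition \ref{linearity} and Corollary \ref{iff-cor}, both of which become applicable once we exhibit, for each pair $(\Phi,\sigma)$, a compatible triple $(\Phi,\sigma,g) \in \mathrm{Aut}(D^b\mathrm{Coh}(C)) \times \mathrm{Stab}_\mathcal{N}(D^b\mathrm{Coh}(C)) \times \widetilde{GL}^+(2,\mathbb{R})$ whose central charge has spanning image. The plan therefore has four steps: construct the compatible triple; check the spanning image condition; invoke the Gromov--Yomdin equality for curves; and conclude via Corollary \ref{iff-cor}(1) and Proposition \ref{linearity}(1)--(2).

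For the first step, I would rely on the well-established description of $\mathrm{Stab}_\mathcal{N}(D^b\mathrm{Coh}(C))$ due to Bridgeland, Macr\`i, and Okada \cite{Bri1,Mac,Oka}: for every smooth projective curve $C$ the space $\mathrm{Stab}_\mathcal{N}(D^b\mathrm{Coh}(C))$ (more precisely, the fixed connected component) is a single $\widetilde{GL}^+(2,\mathbb{R})$-orbit. Consequently $\Phi \cdot \sigma$ and $\sigma$ lie in the same orbit, which by Remark \ref{triple-rmk} yields some $g \in \widetilde{GL}^+(2,\mathbb{R})$ with $\Phi \cdot \sigma = \sigma \cdot g$. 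For the second step, since $\mathrm{rk}\,\mathcal{N}(D^b\mathrm{Coh}(C)) = 2$, the stability function $Z_\sigma$ induces an $\mathbb{R}$-linear isomorphism from $\mathcal{N}(D^b\mathrm{Coh}(C)) \otimes_\mathbb{Z} \mathbb{R}$ onto $\mathbb{C}$; moreover every class in $\mathcal{N}(D^b\mathrm{Coh}(C))$ decomposes as the sum of the numerical classes of the Harder--Narasimhan factors of any representing object, so the set $\{Z_\sigma(E) \,|\, E \text{ is } \sigma\text{-semistable}\}$ generates the image of $Z_\sigma$ as an abelian group and in particular $\mathbb{R}$-spans $\mathbb{C}$.

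With these two ingredients in hand, the remainder is mechanical. Theorem \ref{gy-curve} provides $h_\mathrm{cat}(\Phi) = \log\rho(\Phi)$; Corollary \ref{iff-cor}(1) then upgrades this to $h_t(\Phi) = h_{\sigma,t}(\Phi)$ for all $t \in \mathbb{R}$; and Proposition \ref{linearity}(1)--(2), together with the saturatedness of $D^b\mathrm{Coh}(C)$, produces the common linear expression $h_t(\Phi) = h_{\sigma,t}(\Phi) = \log\rho(\Phi) + \overline{\nu}(\Phi)\cdot t$. The final assertion is immediate: if $\Phi \in \mathrm{Aut}_\mathrm{std}(D^b\mathrm{Coh}(C))$ then the second half of Theorem \ref{gy-curve} forces $\log\rho(\Phi) = h_\mathrm{cat}(\Phi) = 0$, so the constant term in the linear formula vanishes. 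The one delicate point in this plan is the first step, where the single-orbit statement must be extracted from \cite{Oka} in the $\mathbb{P}^1$ case, from \cite{Bri1} in the elliptic case, and from \cite{Mac} in the higher genus case; everything after that is a direct application of machinery already developed in the paper.
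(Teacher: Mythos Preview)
Your argument is correct for curves of genus $g(C)\geq 1$, and there it coincides with the paper's proof. The gap is in the genus~$0$ case. Okada's result says that $\mathrm{Stab}_\mathcal{N}(D^b\mathrm{Coh}(\mathbb{P}^1))\simeq\mathbb{C}^2$ is \emph{connected}, but it is \emph{not} a single $\widetilde{GL}^+(2,\mathbb{R})$-orbit: besides the geometric orbit of $\sigma_0=(-\deg+i\cdot\mathrm{rk},\mathrm{Coh}(\mathbb{P}^1))$ there are algebraic stability conditions whose heart is a tilt $\langle\mathcal{O}(k-1)[1],\mathcal{O}(k)\rangle$. For such a $\sigma$ and $\Phi=(-\otimes\mathcal{O}(1))$, the sets of semistable objects of $\sigma$ and of $\Phi\cdot\sigma$ differ, so they lie in different $\widetilde{GL}^+(2,\mathbb{R})$-orbits and no $g$ with $\Phi\cdot\sigma=\sigma\cdot g$ exists. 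Your spanning-image argument also breaks here: the central charge of an algebraic stability condition can send both simple objects to the same ray, so $Z_\sigma$ need not be an $\mathbb{R}$-linear isomorphism onto $\mathbb{C}$.

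The paper's fix is not to produce a compatible triple for the given $\sigma$, but to use connectedness together with Lemma~\ref{mass-grow-lem} to replace $\sigma$ by the specific geometric point $\sigma_0$, which does have spanning image. One then invokes Bondal--Orlov to reduce to standard autoequivalences, computes explicitly how $f^*$, $[m]$, and $-\otimes\mathcal{L}$ act on $Z_{\sigma_0}$, and applies Lemma~\ref{criterion} (using that standard autoequivalences preserve $\mathrm{Coh}(\mathbb{P}^1)$ up to shift) to manufacture the compatible triple at $\sigma_0$. After that, your Steps~3--4 go through unchanged.
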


\begin{proof}
Let $g(C)$ be the genus of $C$ and
\begin{equation*}
\sigma_0 \coloneqq (-\mathrm{deg}+i\cdot\mathrm{rk},\mathrm{Coh}(C)) \in \mathrm{Stab}_\mathcal{N}(D^b\mathrm{Coh}(C)).
\end{equation*}
Note that $\sigma_0$ has spanning image.

(1)
If $g(C)>0$ then the action of $\widetilde{GL}^+(2,\mathbb{R})$ on $\mathrm{Stab}_\mathcal{N}(D^b\mathrm{Coh}(C))$ is free and transitive \cite[Theorem 9.1]{Bri1}, \cite[Theorem 2.7]{Mac}.
This implies that $\mathrm{Stab}_\mathcal{N}(D^b\mathrm{Coh}(C))$ is connected and so it is enough to prove the assertion for $\sigma=\sigma_0$ by Lemma \ref{mass-grow-lem}.
Moreover $\mathrm{Stab}_\mathcal{N}(D^b\mathrm{Coh}(C))/\widetilde{GL}^+(2,\mathbb{R})$ consists of one point and thus, for any $\Phi \in \mathrm{Aut}(D^b\mathrm{Coh}(C))$, there exists $g \in \widetilde{GL}^+(2,\mathbb{R})$ such that the triple $(\Phi,\sigma_0,g)$ is compatible (see Remark \ref{triple-rmk}).
The statement now follows from Theorem \ref{gy-curve}, Proposition \ref{linearity} (1) and Corollary \ref{iff-cor} (1).

(2)
If $g(C)=0$ then $\mathrm{Stab}_\mathcal{N}(D^b\mathrm{Coh}(C)) \simeq \mathbb{C}^2$, in particular, it is connected \cite[Theorem 1.1]{Oka}.
Therefore it is enough to prove the assertion for $\sigma=\sigma_0$ by Lemma \ref{mass-grow-lem}.
Moreover, since the anticanonical bundle of $C$ is ample, we have $\mathrm{Aut}(D^b\mathrm{Coh}(C)) = \mathrm{Aut}_\mathrm{std}(D^b\mathrm{Coh}(C))$ \cite[Theorem 3.1]{BO}.
Hence, to conclude the proof, we only have to deal with standard autoequivalences.

For $f \in \mathrm{Aut}(C)$ and $m \in \mathbb{Z}$, we have $f^* \cdot \sigma_0 = \sigma_0$ and $[m] \cdot \sigma_0 = \sigma_0 \cdot ((-1)^m\mathrm{Id},\phi \mapsto \phi+m)$.
Let $\mathcal{L} \in \mathrm{Pic}(C)$ then an explicit computation shows
\begin{equation*}
\begin{pmatrix}
-\mathrm{deg}\,\mathcal{E}\otimes\mathcal{L}\\
\mathrm{rk}\,\mathcal{E}\otimes\mathcal{L}
\end{pmatrix}
=
\begin{pmatrix}
1 & \mathrm{deg}\,\mathcal{L}\\
0 & 1
\end{pmatrix}
\begin{pmatrix}
-\mathrm{deg}\,\mathcal{E}\\
\mathrm{rk}\,\mathcal{E}
\end{pmatrix}.
\end{equation*}
As tensoring with a line bundle preserves $\mathrm{Coh}(C) \subset D^b\mathrm{Coh}(C)$, we can apply Lemma \ref{criterion} and deduce that there exists $h : \mathbb{R} \to \mathbb{R}$ such that
\begin{equation*}
(- \otimes \mathcal{L}) \cdot \sigma_0 = \sigma_0 \cdot \left( \begin{pmatrix}1 & \mathrm{deg}\,\mathcal{L}\\0 & 1\end{pmatrix},h \right).
\end{equation*}

Summing up, we proved that, for any $\Phi \in \mathrm{Aut}_\mathrm{std}(D^b\mathrm{Coh}(C))$, there exists $(M,h) \in \widetilde{GL}^+(2,\mathbb{R})$ such that $(\Phi,\sigma_0,(M,h))$ is compatible and $\rho(M)=1$.
The statement now follows from Theorem \ref{gy-curve}, Proposition \ref{linearity} (1) and Corollary \ref{iff-cor} (1).
\end{proof}

\subsection{Category \texorpdfstring{$D^b\mathrm{Coh}_{(1)}(X)$}{DbCoh(1)(X)}}\label{MP-example}

Let $X$ be an irreducible smooth projective variety of dimension $d \geq 2$.
Then, following \cite{MP}, we define $\mathrm{Coh}^2(X) \subset \mathrm{Coh}(X)$ as the full subcategory of torsion sheaves on $X$ whose support has codimension at least 2.
It is a Serre subcategory of $\mathrm{Coh}(X)$ and so we can take the quotient
\begin{equation*}
\mathrm{Coh}_{(1)}(X) \coloneqq \mathrm{Coh}(X)/\mathrm{Coh}^2(X)
\end{equation*}
which is again an abelian category.

In \cite{MP}, the authors studied the triangulated category $D^b\mathrm{Coh}_{(1)}(X)$ and proved that it controls the birational geometry of $X$.
Moreover they described the space of stability conditions on $D^b\mathrm{Coh}_{(1)}(X)$ proving that it behaves roughly as for the bounded derived category of a curve.
We will now use their description to prove that the analogue between $D^b\mathrm{Coh}_{(1)}(X)$ and the bounded derived category of a curve is also true from a dynamical point of view when $X$ has Picard rank 1.

Let us first recall the description of the space of stability conditions on $D^b\mathrm{Coh}_{(1)}(X)$.
Note that we have an isomorphism $K(D^b\mathrm{Coh}_{(1)}(X)) \cong \mathbb{Z} \oplus \mathrm{Pic}(X)$ given by $\mathrm{rk} \oplus \mathrm{det}$ \cite[Proposition 3.15]{MP}.
Therefore it is natural to consider $\mathrm{Stab}_\Gamma(D^b\mathrm{Coh}_{(1)}(X))$ for $\Gamma \coloneqq \mathbb{Z} \oplus N^1(X)$, i.e., we quotient by numerical equivalence.

In the following, we denote by $N_1(X)$ the group of 1-cycles modulo numerical equivalence, i.e., two cycles $\gamma_1,\gamma_2$ are equivalent if $D \cdot \gamma_1 = D \cdot \gamma$ for any divisor $D$.

\begin{thm}[{\cite[Theorem 4.7]{MP}}]\label{quot-stab}
Let $X$ be an irreducible smooth projective variety of dimension $d \geq 2$.
Then, $\widetilde{GL}^+(2,\mathbb{R})$ acts freely on $\mathrm{Stab}_\Gamma(D^b\mathrm{Coh}_{(1)}(X))$ and any $\widetilde{GL}^+(2,\mathbb{R})$-orbit is a connected component of $\mathrm{Stab}_\Gamma(D^b\mathrm{Coh}_{(1)}(X))$.
The space of connected components is parametrized by the set of rays in the convex cone
\begin{equation*}
C(X) \coloneqq \{ \omega \in N_1(X) \otimes_\mathbb{Z} \mathbb{R} \,|\, \inf \{ \omega \cdot D \,|\, D \text{ is an effective divisor} \} > 0 \}.
\end{equation*}
For each $\omega \in C(X)$, there exists a unique stability condition in the component associated to $\mathbb{R}_{>0}\omega$ of the form $\sigma_\omega \coloneqq (-\omega \cdot c_1 + i \cdot \mathrm{rk},\mathrm{Coh}_{(1)}(X))$.
\end{thm}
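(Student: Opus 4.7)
The plan is to establish the theorem in three stages: first, construct the stability condition $\sigma_\omega$ for each $\omega \in C(X)$; second, show exhaustiveness, i.e., every $\sigma \in \mathrm{Stab}_\Gamma(D^b\mathrm{Coh}_{(1)}(X))$ is $\widetilde{GL}^+(2,\mathbb{R})$-equivalent to some $\sigma_\omega$; third, deduce freeness of the action and that orbits are precisely the connected components.

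For the first stage, fix $\omega \in C(X)$ and consider $Z_\omega = -\omega \cdot c_1 + i \cdot \mathrm{rk}$ paired with the heart $\mathrm{Coh}_{(1)}(X)$. I would verify that $Z_\omega$ is a stability function by the following dichotomy on $0 \neq E \in \mathrm{Coh}_{(1)}(X)$: either $\mathrm{rk}(E) > 0$ (so $\Im Z_\omega(E) > 0$), or $E$ is represented by a torsion sheaf supported in codimension exactly one, in which case $c_1(E) \in N^1(X)$ is a nonzero effective class and $\omega \cdot c_1(E) > 0$ by definition of $C(X)$ (so $Z_\omega(E) \in \mathbb{R}_{<0}$). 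Existence of Harder--Narasimhan filtrations then follows from Bridgeland's noetherian-plus-discrete criterion, using that $\mathrm{Coh}_{(1)}(X)$ is noetherian as a Serre quotient of $\mathrm{Coh}(X)$ and that $Z_\omega$ factors through the finite-rank lattice $\Gamma$; the uniform lower bound built into the definition of $C(X)$ delivers the support property.

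For the second stage, start with an arbitrary $\sigma$ and apply a suitable element of $\widetilde{GL}^+(2,\mathbb{R})$ to normalize the heart, then argue that this normalized heart must coincide with $\mathrm{Coh}_{(1)}(X)$. The key input is that the simple objects of any such heart must be identifiable with either the generic (rank-one) class or torsion classes supported on integral divisors, because $\Gamma$ has rank one in the ``rank direction'' and because sheaves with codimension $\geq 2$ support have been quotiented away. Once the heart is pinned down, the central charge is determined by its values on this spanning family, and one reads off $\omega \in N_1(X)_\mathbb{R}$ from $Z_\sigma$ restricted to $N^1(X)$; that $\omega \in C(X)$ is forced by the semistability of torsion sheaves on each integral divisor combined with the support property.

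The third stage is largely formal. The stabilizer in $\widetilde{GL}^+(2,\mathbb{R})$ of $\sigma_\omega$ must fix both the positive imaginary ray (the image of the rank direction) and the negative real half-line (the image of effective divisor classes), hence is trivial; orbits are open by Bridgeland's local homeomorphism theorem, while orbits corresponding to distinct rays in $C(X)$ are separated because the assignment $\sigma \mapsto \mathbb{R}_{>0}\omega$ is continuous and locally constant. I expect the main obstacle to be the second stage: the classification of hearts genuinely uses the geometry of $X$, since one must exclude exotic tilts and show that, after the $\widetilde{GL}^+(2,\mathbb{R})$-rotation, only the geometric heart $\mathrm{Coh}_{(1)}(X)$ survives. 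In particular a careful torsion-pair analysis (analogous to Macri's and Okada's treatments for curves) appears unavoidable.
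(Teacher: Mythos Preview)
The paper does not prove this statement: it is quoted verbatim from \cite[Theorem 4.7]{MP} and used as a black box in the proof of Proposition~\ref{quot-prop}. There is therefore no proof in the paper to compare your proposal against.

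That said, your three-stage outline is broadly the shape of the argument in Meinhardt--Partsch, and the first and third stages are essentially correct as sketched. Your second stage, however, understates the difficulty and is somewhat misleading. You propose to normalize an arbitrary heart via $\widetilde{GL}^+(2,\mathbb{R})$ and then argue it must equal $\mathrm{Coh}_{(1)}(X)$; but the $\widetilde{GL}^+(2,\mathbb{R})$-action does not change the underlying t-structure except by shift, so you cannot ``rotate'' an exotic heart into the standard one. What one actually needs is to show directly that every bounded t-structure on $D^b\mathrm{Coh}_{(1)}(X)$ supporting a stability condition is, up to shift, the standard one. In \cite{MP} this is done by first proving that the structure sheaves $\mathcal{O}_D$ of integral divisors and the function field object $k(X)$ are stable for \emph{any} stability condition (their Lemma~4.6), using that these objects have endomorphism ring a field and controlling their Ext groups; the heart is then reconstructed from these stable objects. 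Your appeal to ``simple objects of any such heart must be identifiable with\ldots'' and ``torsion-pair analysis analogous to Macr\`i and Okada'' gestures at this but does not supply the mechanism, and the curve arguments do not transplant directly because $\Gamma$ here can have rank larger than two.
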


Let $\mathrm{Aut}_{(1)}(X)$ be the group of birational automorphisms of $X$ which are isomorphisms in codimension 1.

\begin{thm}[{\cite[Corollary 5.5]{MP}}]\label{MP-thm}
Let $X$ be an irreducible smooth projective variety of dimension $d \geq 2$.
For any $\Phi \in \mathrm{Aut}(D^b\mathrm{Coh}_{(1)}(X))$, there exists a decomposition
\begin{equation*}
\Phi = f^*(- \otimes \mathcal{L})[m]
\end{equation*}
for some $f \in \mathrm{Aut}_{(1)}(X)$, $\mathcal{L} \in \mathrm{Pic}(X)$ and $m \in \mathbb{Z}$.
\end{thm}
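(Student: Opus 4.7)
The plan is to use the rigid structure of the stability manifold of $D^b\mathrm{Coh}_{(1)}(X)$ provided by Theorem \ref{quot-stab} to reduce the classification of autoequivalences to a problem about autoequivalences of the abelian category $\mathrm{Coh}_{(1)}(X)$, and then to extract the geometric data $(f,\mathcal{L},m)$ from a Gabriel--Rosenberg-type reconstruction of $\mathrm{Coh}_{(1)}(X)$ in terms of the codimension-$1$ birational geometry of $X$.

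More concretely, fix any $\omega \in C(X)$ and consider $\sigma_\omega = (-\omega\cdot c_1 + i\cdot\mathrm{rk}, \mathrm{Coh}_{(1)}(X))$. Since $\Phi \in \mathrm{Aut}(D^b\mathrm{Coh}_{(1)}(X))$ acts on $\mathrm{Stab}_\Gamma(D^b\mathrm{Coh}_{(1)}(X))$, the stability condition $\Phi\cdot\sigma_\omega$ lies in some connected component. By Theorem \ref{quot-stab}, every component is a single free $\widetilde{GL}^+(2,\mathbb{R})$-orbit, containing a distinguished point $\sigma_{\omega'}$ for some $\omega' \in C(X)$. Thus $\Phi\cdot\sigma_\omega = \sigma_{\omega'}\cdot g$ for a unique $g=(M,f_g)\in\widetilde{GL}^+(2,\mathbb{R})$. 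The heart of $\sigma_\omega$ is $\mathrm{Coh}_{(1)}(X)\subset\mathcal{P}_{\sigma_\omega}(0,1]$ and the heart of $\sigma_{\omega'}\cdot g$ is obtained from $\mathrm{Coh}_{(1)}(X)$ by a shift determined by the integer part of $f_g(0)$. Choosing $m$ so as to cancel this integer shift, one obtains that $\Phi[-m]$ sends the heart $\mathrm{Coh}_{(1)}(X)$ to the heart $\mathrm{Coh}_{(1)}(X)$, so that $\Phi[-m]$ restricts to an exact autoequivalence of the abelian category $\mathrm{Coh}_{(1)}(X)$.

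The main step, and the expected main obstacle, is then to show that any exact autoequivalence of the abelian category $\mathrm{Coh}_{(1)}(X)$ is of the form $f^*(-\otimes\mathcal{L})$ for some $f\in\mathrm{Aut}_{(1)}(X)$ and $\mathcal{L}\in\mathrm{Pic}(X)$. The heuristic is that $\mathrm{Coh}_{(1)}(X)$ encodes only the codimension-$1$ information of $X$: its simple objects correspond to skyscraper-like sheaves at generic points of prime divisors. An autoequivalence must permute these, inducing a bijection on the set of prime divisors of $X$ which, combined with compatibility with generic rank, extends to a birational automorphism $f\in\mathrm{Aut}_{(1)}(X)$. After precomposing with $(f^{-1})^*$, we are reduced to a rank-preserving autoequivalence that fixes every simple object of $\mathrm{Coh}_{(1)}(X)$, and a Gabriel-style argument (tracking the action on $\mathcal{O}_X$ viewed as an object of $\mathrm{Coh}_{(1)}(X)$) identifies it with $-\otimes\mathcal{L}$ for a suitable $\mathcal{L}\in\mathrm{Pic}(X)$. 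Carrying out this reconstruction rigorously—in particular, controlling how line bundles are recovered from the abelian structure of the localized category, and verifying that the induced birational bijection of prime divisors lifts to an automorphism in codimension $1$—is the technical core and would be the part to invoke from \cite{MP}.
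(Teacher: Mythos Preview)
The paper does not give a proof of this theorem: it is quoted verbatim as \cite[Corollary 5.5]{MP} and used as a black box. So there is no ``paper's own proof'' to compare your proposal against. What one can infer from the surrounding text is that in \cite{MP} the argument proceeds in the two stages you describe: first \cite[Proposition 5.1]{MP} shows that every autoequivalence preserves $\mathrm{Coh}_{(1)}(X)$ up to shift, and then the decomposition $\Phi = f^*(-\otimes\mathcal{L})[m]$ is deduced from the abelian structure of $\mathrm{Coh}_{(1)}(X)$. Your overall plan therefore matches the architecture of the cited result.

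That said, your argument for the first stage has a genuine gap. From $\Phi\cdot\sigma_\omega = \sigma_{\omega'}\cdot g$ you only get
\[
\Phi(\mathrm{Coh}_{(1)}(X)) = \mathcal{P}_{\sigma_{\omega'}}\bigl(f_g(0),\,f_g(0)+1\bigr],
\]
which for general $g\in\widetilde{GL}^+(2,\mathbb{R})$ is a \emph{tilt} of $\mathrm{Coh}_{(1)}(X)$, not a shift of it. Cancelling the integer part of $f_g(0)$ by composing with $[-m]$ does not by itself land you back in $\mathrm{Coh}_{(1)}(X)$ unless you know $f_g(0)\in\mathbb{Z}$, or equivalently that the resulting tilted heart actually equals $\mathrm{Coh}_{(1)}(X)$. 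Establishing this is exactly the content of \cite[Proposition 5.1]{MP}, and it requires an additional argument (for instance, analysing which hearts can occur in the slicing, or using structural features of $\mathrm{Coh}_{(1)}(X)$ such as its torsion theory). You should either supply that argument or, as the present paper does, cite \cite[Proposition 5.1]{MP} for it rather than claiming it follows directly from Theorem~\ref{quot-stab}. Your second stage---the Gabriel--Rosenberg-type reconstruction of autoequivalences of $\mathrm{Coh}_{(1)}(X)$---is correctly identified as the technical heart, and you are right that it has to be imported from \cite{MP}.
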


Let us denote by $\mathrm{Aut}_0(D^b\mathrm{Coh}_{(1)}(X))$ the subgroup of those autoequivalences such that, for the decomposition as in Theorem \ref{MP-thm}, $\mathcal{L}$ is numerically trivial.
Then every $\Phi \in \mathrm{Aut}_0(D^b\mathrm{Coh}_{(1)}(X))$ preserves the kernel of $K(D^b\mathrm{Coh}_{(1)}(X)) \to \Gamma$ and therefore there is a group homomorphism $[-]_\Gamma : \mathrm{Aut}_0(D^b\mathrm{Coh}_{(1)}(X)) \to \mathrm{Aut}_\mathbb{Z}(\Gamma)$ which makes the diagram
\begin{equation*}
\begin{tikzcd}
K(D^b\mathrm{Coh}_{(1)}(X)) \ar[r,"{[\Phi]}"] \ar[d] & K(D^b\mathrm{Coh}_{(1)}(X)) \ar[d]\\
\Gamma \ar[r,"{[\Phi]}_\Gamma",swap] & \Gamma
\end{tikzcd}
\end{equation*}
commute.

\begin{prop}\label{quot-prop}
Let $X$ be an irreducible smooth projective variety of dimension $d \geq 2$ and Picard rank 1.
Then, for any $\Phi = f^*(- \otimes \mathcal{L})[m] \in \mathrm{Aut}_0(D^b\mathrm{Coh}_{(1)}(X))$ and $\sigma \in \mathrm{Stab}_\Gamma(D^b\mathrm{Coh}_{(1)}(X))$, we have
\begin{equation*}
h_\sigma(\Phi) = \log\rho(\Phi) = \log\rho(f^*|_{N^1(X)}).
\end{equation*}
\end{prop}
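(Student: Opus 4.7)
The plan is to apply Theorem \ref{cat-gy-thm}(1) to a compatible triple $(\Phi,\sigma_\omega,g)$ and then transfer to arbitrary $\sigma$ via Lemma \ref{mass-grow-lem}. Throughout, the Picard rank $1$ hypothesis plays the decisive role by collapsing the geometry of the stability manifold.

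First I would argue that $\mathrm{Stab}_\Gamma(D^b\mathrm{Coh}_{(1)}(X))$ is connected and forms a single $\widetilde{GL}^+(2,\mathbb{R})$-orbit. Indeed, the perfect intersection pairing forces $\dim_\mathbb{R} N_1(X)_\mathbb{R} = \dim_\mathbb{R} N^1(X)_\mathbb{R} = 1$, so the cone $C(X)$ of Theorem \ref{quot-stab} consists of a single ray, yielding a single connected component. Combined with the free transitivity of the $\widetilde{GL}^+(2,\mathbb{R})$-action, this implies that for any fixed $\omega \in C(X)$ there is a unique $g \in \widetilde{GL}^+(2,\mathbb{R})$ with $\Phi \cdot \sigma_\omega = \sigma_\omega \cdot g$, producing a compatible triple.

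Next I would check that $Z_{\sigma_\omega}$ has spanning image. For any effective divisor $D$, the object $\mathcal{O}_D \in \mathrm{Coh}_{(1)}(X)$ has central charge $-\omega \cdot D < 0$, hence lies at phase $1$; because phase $1$ is the maximal phase in the heart, every Harder--Narasimhan factor of $\mathcal{O}_D$ must be $\sigma_\omega$-semistable at phase $1$, producing semistables with real negative central charge. On the other hand, the Harder--Narasimhan factors of $\mathcal{O}_X$ have non-negative ranks summing to $1$, so at least one of them is $\sigma_\omega$-semistable with positive rank and therefore non-zero imaginary central charge. Together these two families span $\mathbb{C} \cong \mathbb{R}^2$.

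Theorem \ref{cat-gy-thm}(1) then yields $h_{\sigma_\omega}(\Phi) = \log\rho(\Phi)$, and Lemma \ref{mass-grow-lem} extends this to every $\sigma \in \mathrm{Stab}_\Gamma(D^b\mathrm{Coh}_{(1)}(X))$. The final equality $\rho(\Phi) = \rho(f^*|_{N^1(X)})$ is a direct matrix computation: numerical triviality of $\mathcal{L}$ gives $[-\otimes\mathcal{L}]_\Gamma = \mathrm{id}$, so $[\Phi]_\Gamma = (-1)^m \operatorname{diag}(1,\, f^*|_{N^1(X)})$ on $\Gamma \otimes_\mathbb{Z} \mathbb{R} = \mathbb{R} \oplus N^1(X)_\mathbb{R}$, whose spectral radius is $\max(1,|f^*|_{N^1(X)}|) = \rho(f^*|_{N^1(X)})$. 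The main obstacle is the spanning-image verification in the non-standard heart $\mathrm{Coh}_{(1)}(X)$, where one must confirm that the Harder--Narasimhan decompositions genuinely deliver semistables at two different phases rather than collapsing everything to phase $1$; the remaining steps, once the single-component observation is made, are essentially bookkeeping.
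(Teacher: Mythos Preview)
Your argument is correct but proceeds along a somewhat different axis from the paper's.  You exploit Picard rank $1$ at the level of the stability manifold: since $N_1(X)_\mathbb{R}$ is one-dimensional, $C(X)$ consists of a single ray, Theorem \ref{quot-stab} then says $\mathrm{Stab}_\Gamma(D^b\mathrm{Coh}_{(1)}(X))$ is a single free $\widetilde{GL}^+(2,\mathbb{R})$-orbit, and the compatible triple drops out abstractly from transitivity.  The paper instead exploits Picard rank $1$ at the level of the central charge: it cites \cite[Proposition 5.1]{MP} to know that $\Phi$ sends $\mathrm{Coh}_{(1)}(X)$ to a shift of itself, computes explicitly that $Z_{\sigma_\omega}\circ[\Phi]_\Gamma = M\cdot Z_{\sigma_\omega}$ with
\[
M=(-1)^m\begin{pmatrix} f^*|_{N^1(X)} & -c_1(\mathcal{L})\cdot\omega \\ 0 & 1 \end{pmatrix},
\]
and then invokes Lemma \ref{criterion} to upgrade this to a compatible triple.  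Your route is cleaner and avoids the external citation; the paper's route has the advantage of producing $M_g$ explicitly, so that the identification $\rho(M_g)=\rho(f^*|_{N^1(X)})$ is immediate from the triangular form, and it also indicates how to proceed in higher Picard rank when $\omega$ happens to be an eigenvector of $f^*$ (as the paper discusses just after the proof).  Your separate computation of $\rho([\Phi]_\Gamma)$ is fine, and your spanning-image verification is more detailed than the paper's, which simply asserts it.
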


\begin{proof}
Theorem \ref{quot-stab} tells us that, in any connected component of $\mathrm{Stab}_\Gamma(D^b\mathrm{Coh}_{(1)}(X))$, there exists a stability condition of the form $\sigma_\omega = (-c_1 \cdot \omega + i \cdot \mathrm{rk},\mathrm{Coh}_{(1)}(X))$ for some $\omega \in C(X)$.
Therefore, by Lemma \ref{mass-grow-lem}, to prove the statement, it is enough to prove it for the stability conditions of the form $\sigma_\omega$.
Notice that such a stability condition has spanning image.

By \cite[Proposition 5.1]{MP}, we have that any $\Phi \in \mathrm{Aut}(D^b\mathrm{Coh}_{(1)}(X))$ preserves $\mathrm{Coh}_{(1)}(X)$ up to shift.
Moreover an explicit computation shows $[m] \cdot \sigma_\omega = \sigma_\omega \cdot ((-1)^m\mathrm{Id},\phi \mapsto \phi+m)$,
\begin{equation*}
Z_{\sigma_\omega} \circ [- \otimes \mathcal{L}]_\Gamma = \begin{pmatrix}1 & - c_1(\mathcal{L}) \cdot \omega\\0 & 1\end{pmatrix} \cdot Z_{\sigma_\omega} \text{ and } Z_{\sigma_\omega} \circ [f^*]_\Gamma = \begin{pmatrix}f^*|_{N^1(X)} & 0\\0 & 1\end{pmatrix} \cdot Z_{\sigma_\omega}.
\end{equation*}
Combining these relations with Lemma \ref{criterion}, we get that, for any $\Phi = f^*(- \otimes \mathcal{L})[m] \in \mathrm{Aut}(D^b\mathrm{Coh}_{(1)}(X))$, we have
\begin{equation*}
\Phi \cdot \sigma_\omega = \sigma_\omega \cdot \left( (-1)^m \begin{pmatrix}f^*|_{N^1(X)} & - c_1(\mathcal{L}) \cdot \omega\\0 & 1\end{pmatrix},h \right)
\end{equation*}
for some $h : \mathbb{R} \to \mathbb{R}$.
 Then the statement follows from Theorem \ref{cat-gy-thm} (1).
\end{proof}

When the Picard rank of $X$ is bigger than 1, things become more complicated because it is not clear how to describe $Z_{\sigma_\omega} \circ [f^*]_\Gamma$.
However, if $X$ is a surface, we can say something more: the closure of $C(X)$ is the nef cone $\mathrm{Nef}(X)$, any $f \in \mathrm{Aut}_{(1)}(X)$ possesses a unique (up to scalar) eigenvector $\alpha$ of eigenvalue $\rho(f^*)$ and $\alpha \in \mathrm{Nef}(X)$ \cite[Theorem 5.1]{DF}.
If $\alpha \in C(X)$ then the same proof as in Proposition \ref{quot-prop} shows $h_{\sigma_\alpha}(f_*(- \otimes \mathcal{L})[m]) = \log\rho(f^*|_{N^1(X)})$ for any numerically trivial $\mathcal{L} \in \mathrm{Pic}(X)$ and $m \in \mathbb{Z}$.

As a byproduct of this last reasoning, we see that if the canonical bundle $\omega_X$ is either ample or antiample then $h_{\sigma_{\omega_X}}(f_*(- \otimes \mathcal{L})[m]) = 0$ since $f^*\omega_X \cong \omega_X$.
When $f \in \mathrm{Aut}(X)$, this was already known because the categorical entropy of $f_*(- \otimes \mathcal{L})[m]$ on $D^b\mathrm{Coh}_{(1)}(X)$ is smaller or equal than that on $D^b\mathrm{Coh}(X)$ and the latter is known to be zero for any autoequivalence of a smooth projective variety with (anti)ample canonical bundle \cite[Theorem 5.7]{KT}.

\section{Further questions}

In this section, we want to propose more a wishful thinking than a precise result.
In \cite{Ike}, the author defined the mass growth $h_{\sigma,t}(\Phi)$ for any $\Phi \in \mathrm{Aut}(\mathcal{D})$ and any $\sigma \in \mathrm{Stab}(\mathcal{D})$.
Moreover he proved that the mass growth only depends on the connected component in which $\sigma$ lies (see Lemma \ref{mass-grow-lem}), that $h_{\sigma,t}(\Phi) \leq h_t(\Phi)$ (see Theorem \ref{mass-grow-thm}) and that we always have $\log\rho(\Phi) \leq h_\sigma(\Phi)$ (see Proposition \ref{yom-mass-grow}).

Most of his constructions can be performed also when $\sigma$ is only a {\em weak stability condition}, i.e., when $Z_\sigma(E) \in \mathbb{R}_{\geq 0}e^{i\pi\phi}$ (resp. $Z_\sigma(E) \in \mathbb{R}_{> 0}e^{i\pi\phi}$) for $0 \neq E \in \mathcal{P}_\sigma(\phi)$ and $\phi \in \mathbb{Z}$ (resp. $\phi \not\in \mathbb{Z}$).
Hence one could try to compute $h_{\sigma,t}(\Phi)$ for a weak stability condition $\sigma$.

While this is a perfectly well-posed question, it is not clear whether the mass growth $h_{\sigma,t}(\Phi)$ is well-behaved when we vary $\sigma$ in the space of weak stability conditions.
It is not even clear whether the inequality $\log\rho(\Phi) \leq h_\sigma(\Phi)$ holds anymore.
For the time being, let us forget these questions and consider $h_{\sigma,t}(\Phi)$ for a weak stability condition $\sigma$.

The reason why we were led to this more general setup was to check whether the fact that the Gromov--Yomdin type theorem holds for standard autoequivalences (see \cite[Theorem 5.7]{KT}) can be traced back to the preservation of a stability condition.

Let $(X,\mathcal{L})$ be a polarized variety of dimension $d$ where $\mathcal{L}$ is a very ample line bundle.
Then $\sigma = (ip_d-p_{d-1},\mathrm{Coh}(X))$ is a weak stability condition on $D^b\mathrm{Coh}(X)$ where $p_d,p_{d-1}$ are the leading coefficients of the Hilbert polynomial \cite[Example 14.5]{BLMNPS}.

\begin{rmk}
If $\omega$ is the class of $\mathcal{L}$ then $p_d(\mathcal{E}) = \mathrm{rk}\,\mathcal{E} \cdot \omega^d$ and $p_{d-1}(\mathcal{E}) = c_1(\mathcal{E}) \cdot \omega^{d-1} + \mathrm{rk}\,\mathcal{E} \cdot \mathrm{td}(X)_1 \cdot \omega^{d-1}$ for any vector bundle $\mathcal{E}$ where $\mathrm{td}(X)_1 \in H^2(X,\mathbb{C})$ is the degree two component of the Todd class of $X$.
\end{rmk}

Now let $\Phi = f^*(- \otimes \mathcal{L}')[m]$ be a standard autoequivalence.
Then $\chi(X,f^*\mathcal{E}) = \chi(X,\mathcal{E})$ for any $\mathcal{E} \in D^b\mathrm{Coh}(X)$ and therefore $Z_\sigma \circ [f^*] = Z_\sigma$.
For a line bundle $\mathcal{L}'$, we have
\begin{align*}
Z_\sigma(\mathcal{E}\otimes\mathcal{L}')
&= ip_d(\mathcal{E}\otimes\mathcal{L}') - p_{d-1}(\mathcal{E}\otimes\mathcal{L}')\\
&= ip_d(\mathcal{E}) - p_{d-1}(\mathcal{E}) - \mathrm{rk}\,\mathcal{E} \cdot c_1(\mathcal{L}') \cdot \omega^{d-1}\\
&= \begin{pmatrix}1&\frac{c_1(\mathcal{L}') \cdot \omega^{d-1}}{\omega^d}\\0&1\end{pmatrix} \cdot Z_\sigma(\mathcal{E}).
\end{align*}

As $\Phi$ shifts the subcategory $\mathrm{Coh}(X)$ we can apply Lemma \ref{criterion}, and we obtain that the assumption of Theorem \ref{cat-gy-thm} holds.
Hence we get $h_\sigma(\Phi) = \log\rho(\Phi)$.
Furthermore, if the canonical bundle $\omega_X$ is ample (resp. antiample), we can take $\mathcal{L} = \omega_X$ (resp. $\mathcal{L} = \omega_X^\vee$).
Then $f^*\omega_X \cong \omega_X$ (resp. $f^*\omega_X^\vee \cong \omega_X^\vee$) and we have $h_\sigma(\Phi) = \log\rho(\Phi) = 0$ for any $\Phi \in \mathrm{Aut}_\mathrm{std}(D^b\mathrm{Coh}(X)) = \mathrm{Aut}(D^b\mathrm{Coh}(X))$.

\end{document}